\definecolor{mblue}{RGB}{65,105,225}
\crefname{equation}{}{}
\newtheorem{thm}{Theorem}[section]
\newtheorem*{thm*}{Theorem}
\newtheorem{cor}[thm]{Corollary}
\newtheorem*{cor*}{Corollary}
\newtheorem{prop}[thm]{Proposition}
\newtheorem{proposition}[thm]{Proposition}
\newtheorem*{proposition*}{Proposition}
\newtheorem{lem}[thm]{Lemma}
\newtheorem{lemma}[thm]{Lemma}
\newtheorem{maintheorem}{Theorem} 
\crefname{maintheorem}{Theorem}{Theorems}
\theoremstyle{definition}
\newtheorem{defn}[thm]{Definition}
\newtheorem{eg}[thm]{Example}
\newtheorem{example}[thm]{Example}
\newtheorem{notn}[thm]{Notation}
\newtheorem{rmk}[thm]{Remark}
\let\c@equation\c@thm
\newenvironment{pf}{\begin{proof}}{\end{proof}}
\newcommand{\cA}{\ensuremath{\mathcal{A}}}
\newcommand{\C}{\ensuremath{\mathbb{C}}}
\newcommand{\cE}{\ensuremath{\mathcal{E}}}
\newcommand{\F}{\ensuremath{\mathbb{F}}}
\newcommand{\bH}{\ensuremath{\mathbf{H}}}
\newcommand{\rH}{\ensuremath{\mathrm{H}}}
\newcommand{\bP}{\ensuremath{\mathbb{P}}}
\newcommand{\R}{\ensuremath{\mathbb{R}}}
\newcommand{\bX}{\ensuremath{\mathbf{X}}}
\newcommand{\Z}{\ensuremath{\mathbb{Z}}}
\newcommand{\into}{\hookrightarrow}
\newcommand{\rtarr}{\longrightarrow}
\newcommand{\xrtarr}[1]{\xrightarrow{#1}}
\newcommand{\iso}{\cong}
\newcommand{\mr}[1]{\mathrm{#1}}
\newcommand{\cl}{\mathrm{cl}}
\newcommand{\Sp}{\mathbf{Sp}} 
\newcommand{\Ctwo}{{\mr{C}_2}}
\newcommand{\bMC}{\mathbf{H}_\bigstar}
\newcommand{\tMC}{\tilde{\mathbf{H}}_\bigstar}
\newcommand{\classicalcolor}{olive}
\newcommand{\equivcolor}{purple!50!blue}
\newcommand{\cxi}{{\color{\classicalcolor}{\boldsymbol\xi}}}
\newcommand{\exi}{{\color{\equivcolor}{\boldsymbol\xi}}}
\newcommand{\etau}{{\color{\equivcolor}{\boldsymbol{\tau}}}}
\newcommand{\Htwo}[3][\bH]{%
  \begin{tikzpicture}
    \begin{scope}
      \node[inner sep=0pt,outer sep=0pt] (a) {\phantom{$#1$}};
      \clip (a.south west) rectangle ($(a.north)-(0.5\pgflinewidth,0)$);
      \node[inner sep=0pt,outer sep=0pt,text=#2]  {$#1$};
    \end{scope}
    \clip (a.south east) rectangle ($(a.north)-(0.5\pgflinewidth,0)$);
      \node[inner sep=0pt,outer sep=0pt,text=#3]  {$#1$};
  \end{tikzpicture}
}
\DeclareMathOperator{\Sq}{Sq}
\newcommand{\Ptwo}{\mathrm{Sym}^2}
\newcommand{\Pow}{\mathrm{Sym}}
\DeclareMathOperator{\Ext}{Ext}
\newcommand{\cAcl}{\cA^\cl}
\newcommand{\cAC}{\cA^{\Ctwo}}
\newcommand{\cEC}{\cE^{\Ctwo}}
\newcommand{\cAhC}{\cA^{h\Ctwo}}
\newcommand{\cEhC}{\cE^{h\Ctwo}}
\newcommand{\OmRhoSRho}{\Omega^\rho_+ S^{\rho+1}}
\newcommand{\asig}{a_\sigma}
\newcommand{\usig}{u_\sigma}
\newcommand{\Ccolor}{orange}
\newcommand{\Scolor}{blue}
\newcommand{\rhoC}{{\textcolor{\Ccolor}{\rho_C}}}
\newcommand{\sigC}{{\textcolor{\Ccolor}{\sigma_C}}}
\newcommand{\rhoS}{{\textcolor{\Scolor}{\rho_\Sigma}}}
\newcommand{\sigS}{{\textcolor{\Scolor}{\sigma_\Sigma}}}
\newcommand{\Res}{\Phi^e}
\newcommand{\Fix}{\Phi^{C_2}}
\newcommand{\mFix}{\hat\Phi^{C_2}}
\newcolumntype{C}{>{$}c<{$}}
\newcolumntype{L}{>{$}l<{$}}
\newcolumntype{R}{>{$}r<{$}}
\newcolumntype{H}{>{\setbox0=\hbox\bgroup$}c<{$\egroup}@{}}
\numberwithin{equation}{section}
\numberwithin{figure}{section}
\numberwithin{table}{section}
\definecolor{blue'}{HTML}{0072B2}
\definecolor{green'}{HTML}{009E73}
\definecolor{red'}{HTML}{D55E00}
\begin{document}
\title
{
The real Brown-Peterson homology of $\Omega^\rho S^{\rho + 1}$
}
\author{Christian Carrick}
\address{Mathematisches Institut\\ Universität Bonn\\ 53129 Bonn, Germany}
\email{carrick@math.uni-bonn.de}
\author{Bertrand {J}. Guillou}
\address{Department of Mathematics\\ University of Kentucky\\
Lexington, KY 40506, USA}
\email{bertguillou@uky.edu}
\author{Sarah Petersen}
\address{Department of Mathematics\\ University of Colorado Boulder \\ 
Boulder, CO 80309}
\email{sarahllpetersen@gmail.com}

\begin{abstract}
We compute the $RO(C_2)$-graded real Brown--Peterson homology of the representation-loop space $\Omega^\rho S^{\rho + 1}$, where $\rho$ is the regular representation of the cyclic group of order two. This calculation gives a $C_2$-equivariant analogue of the classical computation of Brown--Peterson homology of the double loop space $\Omega^2 S^3$ due to Ravenel.
Along the way, we develop comodule Nishida relations for $\rho$-loop spaces.
\end{abstract}

\date{\today}

\maketitle

\setcounter{tocdepth}{1}
\tableofcontents

\section{Introduction}  

Classically, Dieudonn\'{e} theory gives an equivalence between the category of graded bicommutative Hopf algebras over $\mathbb{F}_p$ and the category of Dieudonn\'{e} modules over a certain ring $R$. In \cite{Goerss98}, Goerss showed that this equivalence respects the ring structure of Hopf rings over $\mathbb{F}_p$. It follows that Dieudonn\'{e} theory can be used to study the homology of spaces representing a multiplicative generalized cohomology theory. The homology of the representing spaces inherits the structure of a Hopf ring if the homology theory has a K\"{u}nneth isomorphism.

In the setting of equivariant homotopy theory, there is no immediate candidate for a Dieudonn\'{e} functor. 
Thus, in this paper, we study an equivariant computation which may be used to inform the development of an equivariant Dieudonn\'{e} theory. 
This approach is motivated by nonequivariant results.

Nonequivariantly, Goerss showed that Ravenel's computation \cite[Theorem~C]{Ravenel93}
 of the Brown--Peterson homology of $\Omega^2 S^3$
can be combined with Dieudonn\'{e} theory to describe the homology of the spaces representing a Landweber exact theory \cite[Sections 10 and 11]{Goerss98}. 
Ravenel's answer was 
\begin{equation}
\label{RavenelComputation}    
    BP_* \Omega^2 S^3 \iso BP_*[x_0,y_1,y_2,y_3,\dots]/(x_0^2,r_1,r_2,r_3,\dots),
\end{equation}
with $|x_0|=1$ and $|y_n|=2(2^n-1)$, and where the relations $r_n$ were determined modulo the square of $I=(v_0,v_1,v_2,\dots)\subset BP_*$ to be
\[
    r_n \equiv \sum_{i=0}^n v_i e_{n-i}^{2^i} \pmod{I^2}.
\]

By completing an analogous $C_2$-equivariant computation, 
we give a starting point for determining the {resulting} constraints 
{that may be imposed} on the existence of a Dieudonn\'{e} functor in the setting of equivariant homotopy theory.

In this paper, we compute the $RO(C_2)$-graded real Brown--Peterson homology $BP_\R$ (\cite{HK}) of the representation loop space $\Omega^\rho S^{\rho +1}$, where $\rho$ is the regular representation for the cyclic group of order two, $C_2$. As input for this calculation, we determine  
in \cref{prop:A-comod0rhoSrho+1} the
coaction of the equivariant dual Steenrod algebra $\cAC_\bigstar$ on the homology $\bH_\bigstar \Omega^\rho S^{\rho+1}$. The latter was computed by Behrens and Wilson \cite[Theorem~4.1]{BW}. The homology generators may all be described using Dyer-Lashof operations, and we produce comodule-Nishida relations, which prescribe the $\cAC_\bigstar$-coaction on the output of Dyer-Lashof operations.

This differs from equivariant co-Nishida relations already in the literature, which deal with equivariant {\it infinite} loop spaces rather than finite loop spaces \cite{Wilson17,Wilson19}. 
See also the thesis of Sikora \cite{Sikora21} for related work in the direction of finite loop spaces.

The context for these Nishida relations is $C_2$-spaces equipped with an action of an $E_\rho$-operad \cite{GuMa}, such as $\rho$-loop spaces. We construct Dyer-Lashof operations in the homology of $E_\rho$-spaces, which we denote
\[
        \bH_{k\rho}X \xrtarr{Q_0} \bH_{(2k)\rho}X,
    \qquad
    \qquad
    \bH_{k\rho}X \xrtarr{Q_1} \bH_{2k\rho+\sigma}X,
\]
\[
        \bH_{k\rho+1}X \xrtarr{Q_0} \bH_{(2k+1)\rho}X,
    \qquad \text{and}
    \qquad
    \bH_{k\rho+1}X \xrtarr{Q_1} \bH_{(2k+1)\rho+1}X.
\]
As a consequence of our co-Nishida relations (\cref{thm:equivarRightCoactionFormula}), we deduce (see \cref{eg:CoactQzero} and \cref{cor:CoactQdegreekrhoplusone}) the following precise formulas relating the comodule structure to Dyer-Lashof operations.

\begin{maintheorem}
    Let $X$ be an $E_\rho$-algebra. Then for  $x\in \rH_{k\rho}X$ and $y\in \rH_{k\rho+1}X$, the (right) coaction satisfies
\[
    \psi_R Q_0(x) = Q_0 \psi_R(x), 
    \qquad 
    \psi_R Q_1(x) = Q_1 \psi_R(x),
\]
\[
    \psi_R Q_0(y) = Q_0 \psi_R(y), 
    \qquad \text{and} \qquad
    \psi_R Q_1(y) = Q_1 \psi_R(y) + Q_0 \psi_R(y) \cdot (1 \otimes \etau_0),
\]
where $\etau_0\in \cAC_\bigstar$ is in degree 1. 
\end{maintheorem}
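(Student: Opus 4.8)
The plan is to obtain all four identities as special cases of the general co-Nishida relation of \cref{thm:equivarRightCoactionFormula}, specialized to the internal degrees $k\rho$ and $k\rho+1$; the only real content at this stage is determining which correction terms in the general formula survive.

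First I would fix the degree bookkeeping. Writing $\rho = 1 + \sigma$, a class $x \in \rH_{k\rho}X$ sits in bidegree $(k,k)$ and $y \in \rH_{k\rho+1}X$ in bidegree $(k+1,k)$ in the $(\mathrm{triv},\sigma)$-grading. The right coaction $\psi_R$ is degree-preserving into $\rH_\bigstar X \otimes \cAC_\bigstar$, and after Hu--Kriz the relevant polynomial generators of $\cAC_\bigstar$ are $\exi_i$ in degree $(2^i-1)\rho$ and $\etau_i$ in degree $(2^i-1)\rho + 1$, so that $\etau_0$ is the degree-$1$ generator. I would substitute these into the general formula and, for each of $Q_0$ and $Q_1$, track the relevant (mod $2$) coefficients under the constraint that every surviving term be homogeneous of the correct total degree.

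For $\psi_R Q_0(x)$, $\psi_R Q_1(x)$, and $\psi_R Q_0(y)$ I expect the degree constraints to annihilate every correction, leaving only the leading term $Q_j\psi_R$: no generator of $\cAC_\bigstar$ can absorb the leftover degree. The essential case is $\psi_R Q_1(y)$, where the extra $+1$ in the internal degree of $y$ is exactly what couples to the degree-$1$ generator $\etau_0$ and produces the correction $Q_0\psi_R(y)\cdot(1 \otimes \etau_0)$; I would then check that its coefficient is $1$ and that no further terms appear, using the explicit multiplicative structure of $\cAC_\bigstar$.

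The main obstacle lies upstream, in \cref{thm:equivarRightCoactionFormula} itself. To establish it one reduces, by naturality of $Q_0$, $Q_1$, and $\psi_R$, to the universal example --- the free $E_\rho$-algebra on a representation sphere --- where the statement becomes a computation of the $\cAC_\bigstar$-coaction on the homology of the $C_2$-equivariant extended power, i.e.\ of the relevant twisted equivariant classifying space of $\Sigma_2$. Isolating the $\etau_0$ contribution there, and confirming it does not leak into the three strict cases, is the delicate point; granting the general formula, the specialization above is routine degree-counting.
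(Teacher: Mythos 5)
Your outline coincides with the paper's: both deduce all four identities by specializing the general co-Nishida relation of \cref{thm:equivarRightCoactionFormula}, so the entire content is the computation of the right $\cAC_\bigstar$-coaction on $\tMC \Ptwo_\rho(S^{k\rho})$ and $\tMC \Ptwo_\rho(S^{k\rho+1})$. The gap is in how you propose to do that computation. ``Degree constraints'' plus ``the explicit multiplicative structure of $\cAC_\bigstar$'' do not determine the coefficients, in two places. First, for $f_{2k\rho}\in\tMC\Ptwo_\rho(S^{k\rho})$ a term $f_{2k\rho+\sigma}\otimes \asig$ is degree-allowed, since $\asig\in\cAC_{-\sigma}$; the paper eliminates it using the counit axiom, not degrees (\cref{prop:CoactrhoSymP}). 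This is minor. Second, and more seriously, for the top class of $\tMC\Ptwo_\rho(S^{k\rho-\sigma})$ (the case governing $Q_1$ on classes in degree $\equiv 1 \bmod \rho$) the degree-$1$ part of $\cAC_\bigstar$ is two-dimensional, spanned by $\etau_0$ and $\asig\exi_1$, so degree counting only yields a coaction of the form $e_{2k\rho-\sigma}\otimes 1 + \delta\, e_{2k\rho-\sigma-1}\otimes\etau_0 + \varepsilon\, e_{2k\rho-\sigma-1}\otimes\asig\exi_1$ with $\delta$ and $\varepsilon$ undetermined, and nothing internal to the ring structure of $\cAC_\bigstar$ resolves this.

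The paper pins down $\delta=1$ and $\varepsilon=0$ by comparison maps out of the genuine equivariant category, and these are the ideas missing from your plan. The restriction to the underlying stunted projective space $\Sigma^{2k-1}\R\bP^{2k}_{2k-1}$, whose top cell carries a $\cxi_1$ in its classical coaction (this uses that the bottom cell sits in odd degree, a parity check your ``routine degree-counting'' would not perform), together with $\Res(\etau_0)=\cxi_1$ from \cref{UnderlyingA}, forces $\delta=1$. The geometric fixed points, identified as $S^{2k-1}\vee S^{2k}$ in \cref{lem:FixPtsSymrhominussig} and hence carrying a trivial coaction, together with $\mFix(\exi_1)=\cxi_1/\asig$ from \cref{GeomFixedA}, force $\varepsilon=0$. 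Had $\varepsilon$ come out nonzero, your final formula for $\psi_R Q_1(y)$ would acquire an extra $\asig\exi_1$-correction, so this is not a bookkeeping detail but the crux of the statement.
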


These formulas  allow us to compute the $\cAC_\bigstar$-comodule structure on $\bMC \Omega^\rho S^{\rho + 1}$ (\cref{prop:A-comod0rhoSrho+1}), which serves as input to compute the equivariant Adams spectral sequence converging to ${BP_\R}_\bigstar \Omega^\rho S^{\rho+1}$.
 
 Observing that ${BP_\R}\wedge \OmRhoSRho$ is Borel complete (\cref{prop:borelcomplete}), we instead compute the Borel equivariant Adams spectral sequence developed by Greenlees \cite{G}. This is preferable because it leads to a cleaner computation. In order to compute the Adams $E_2$-page of the Borel equivariant Adams spectral sequence, we first compute the $\asig$-Bockstein spectral sequence. Our main computational result gives a $C_2$-equivariant analogue of Ravenel's computation of $BP_* \Omega^2 S^3$ and is described over the course of the following two corollaries as well as in charts in \cref{sec:charts}. 

\begin{maintheorem}[\cref{cor:EinftyBSS}]
    The $E_\infty$-page of the $\asig$-BSS for $BP_\R\wedge \OmRhoSRho$ is isomorphic to the subalgebra of 
\[
\frac{\F_2[\usig^\pm,\asig,v_0,v_1,\ldots,t_0,e_1,e_2,\ldots]}{(t_0^2+\usig e_1,r_1,r_2,\ldots,\asig^{2^{j+1}-1}v_j|j\ge0)}
\]
generated by
\[
\{\usig^{2^{j+1}k}v_j,\asig,t_0,e_1,e_2,\ldots|j\ge0,k\in\Z\}.
\]  
\end{maintheorem}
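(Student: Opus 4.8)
The plan is to run the $\asig$-Bockstein spectral sequence (BSS) starting from the $\usig$-inverted Borel input and then read off the surviving classes together with the relations they satisfy. Since $BP_\R \wedge \OmRhoSRho$ is Borel complete by \cref{prop:borelcomplete}, the $E_1$-page is built from $\usig$-inverted homology, and restriction to the underlying spectrum $\Res(\OmRhoSRho) \simeq \Omega^2_+ S^3$ identifies the $\asig = 0$ part with Ravenel's classical computation \eqref{RavenelComputation}. I therefore expect
\[
E_1 \iso \frac{\F_2[\usig^\pm, \asig, v_0, v_1, \ldots, t_0, e_1, e_2, \ldots]}{(t_0^2 + \usig e_1,\, r_1, r_2, \ldots)},
\]
where $v_j$ lifts the classical $v_j$, $t_0$ lifts the generator $x_0$, and the $e_i$ lift Ravenel's classes. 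The first task is to justify this identification: the elements $\asig$, $\usig^{\pm1}$, $t_0$, $e_i$, and $v_j$ are permanent cycles, being detected either on the underlying spectrum or, after inverting $\asig$, in geometric fixed points; and the relations $t_0^2 + \usig e_1 = 0$ and $r_n = 0$ already hold here. The former reflects the dual Steenrod algebra relation $\etau_0^2 = \usig \exi_1 + \asig \etau_1$ (with the $\asig$-term absent on this page), the equivariant deformation of $x_0^2 = 0$; the $r_n$ are the lifts of Ravenel's relations, whose leading terms $\sum_i v_i e_{n-i}^{2^i}$ are pinned down by the comodule structure of \cref{prop:A-comod0rhoSrho+1}.

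Next I would compute the differentials. The BSS is multiplicative and $\asig$, $\usig^{\pm1}$, $t_0$, and the $e_i$ are permanent cycles, so every $d_r$ is $\F_2[\usig^\pm, \asig, t_0, e_1, \ldots]$-linear and is determined by its values on the $v_j$. The heart of the argument is to produce, for each $j \ge 0$, an $\asig$-Bockstein of length exactly $2^{j+1}-1$ whose image is $\asig^{2^{j+1}-1} v_j$. I would extract these from the iterated Hu--Kriz relations $\etau_i^2 = \usig \exi_{i+1} + \asig \etau_{i+1}$ in $\cAC_\bigstar$, together with the coaction formulas of \cref{thm:equivarRightCoactionFormula}, which govern how the $Q_1$-generated classes $v_j$ interact with multiplication by $\asig$ and $\usig$. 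The same differentials force the $\usig$-divisibility pattern in the statement: the classes $\usig^m v_j$ with $m \not\equiv 0 \pmod{2^{j+1}}$ are precisely the sources and targets of these Bocksteins, so that only the $\usig^{2^{j+1}k} v_j$ survive.

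The main obstacle is pinning down the precise length $2^{j+1}-1$ and ruling out stray differentials. I would establish the length by induction on $j$: the relation $\etau_j^2 = \usig \exi_{j+1} + \asig \etau_{j+1}$ reduces the claim for $v_j$ to that for $v_{j+1}$ while doubling the $\asig$-exponent, matching the bookkeeping $2^{j+1}-1 = 2(2^j - 1) + 1$ dictated by the representation degree of $v_j$; the base case $j = 0$ is the relation $t_0^2 = \usig e_1$ itself. That no longer or additional differentials occur is then a sparseness check: once the relations $r_n$ and the newly produced $\asig$-torsion relations are imposed, the remaining bidegrees cannot support further differentials, and this can be confirmed by comparing the $\asig$-inverted page with the geometric-fixed-point answer $\asig^{-1} E_\infty \iso \pi_\bigstar \Fix(BP_\R \wedge \OmRhoSRho)$ and the $\usig$-inverted page with Ravenel's answer.

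Finally, collecting the surviving permanent cycles $\{\usig^{2^{j+1}k} v_j, \asig, t_0, e_1, e_2, \ldots \mid j \ge 0,\, k \in \Z\}$ subject to $t_0^2 + \usig e_1 = 0$, the relations $r_n$, and the differentials $\asig^{2^{j+1}-1} v_j = 0$ identifies $E_\infty$ with the asserted subalgebra, completing the computation.
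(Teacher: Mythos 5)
Your outline of the $E_1$-page and of the differentials $d_{2^{n+1}-1}(\usig^{2^{n}})=\asig^{2^{n+1}-1}v_n$ matches the paper in broad strokes: the paper obtains the $E_1$-page by a Ravenel--May-style filtration and regular-sequence argument (\cref{prop:E1identifications}) rather than by appeal to the underlying spectrum (which only sees the $\asig=0$, $\usig$-inverted part and so does not by itself determine $\Ext$ over $\mathrm{gr}\,\mathcal{E}^h_\bigstar$), and it imports the differentials from the known $\asig$-BSS for $BP_\R$ itself via the unit map (\cref{prop:annihilatorBPR}) rather than re-deriving them from the Hu--Kriz relations; but these are the same differentials. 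The genuine gap is in your final step, where you pass from ``collect the surviving permanent cycles subject to the relations'' to the identification of $E_\infty$, backed only by a ``sparseness check.'' Computing the homology of $E_{2^{n+1}-1}$ with respect to $d_{2^{n+1}-1}$ is not a matter of sparseness: the kernel of the differential contains $x\cdot\usig^{2^{n}}$ for \emph{every} $x$ annihilating $\asig^{2^{n+1}-1}v_n$, so one must determine the annihilator ideal of $\asig^{2^{n+1}-1}v_n$ in $E_{2^{n+1}-1}$ exactly. If that annihilator contained anything beyond $(e_1^{2^{n}},\ \usig^{2^{j+1}k}v_j\mid 0\le j\le n-1,\ k\in\Z)$, the next page would contain surviving classes not lying in your proposed subalgebra, and neither the $\usig$-inverted nor the $\asig$-inverted comparison can detect such classes, since they would be $\asig$-torsion and invisible after restriction.

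This annihilator computation is precisely where the paper's technical work lives: \cref{lem:kernelgenerators} converts annihilator data into algebra generators for the homology of a dga with a single non-cycle generator, and the inductive proposition preceding \cref{cor:EinftyBSS} computes $\mathrm{ann}(\asig^{2^{n+1}-1}v_n)$ page by page. That computation in turn rests on \cref{lemma:integraldomain}, an integral-domain statement for the quotient rings $C_k$ adapted from Ravenel--Wilson, which is what guarantees that $v_n$ has no zero divisors in $E_{2^{n+1}-1}/\mathrm{ann}(\asig^{2^{n+1}-1})$ beyond those forced by the visible relation $v_ne_1^{2^{n}}=\overline{r_{n+1}}=0$. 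Some analogue of this regularity argument is unavoidable, and its absence is the missing idea in your proposal. (A smaller inaccuracy: the sources of the Bockstein differentials are the powers $\usig^{2^{n}}$ and their multiples; the classes $\usig^{m}v_j$ with $2^{j+1}\nmid m$ fail to survive because they are not cycles, by the Leibniz rule, not because they are ``sources and targets'' of the Bocksteins.)
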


This gives the associated graded, with respect to the $\asig$-filtration, of 
\[\Ext_{\mathcal{E}^h_\bigstar}(\bH_\bigstar^h,\bH_\bigstar^h\Omega^\rho S^{\rho+1}),
\]
which is the $E_2$-term for the Borel Adams spectral sequence for $BP_\R\wedge\OmRhoSRho$. We also show that there are no nonzero Adams differentials.

\begin{maintheorem}[\cref{cor:adamscollapse}]
     The Borel Adams SS for $BP_\R\wedge \OmRhoSRho$ collapses on the $E_2$-page.
\end{maintheorem}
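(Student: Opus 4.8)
The plan is to exploit the multiplicativity of the Borel Adams spectral sequence: its differentials are derivations, so it suffices to exhibit a set of algebra generators of the $E_2$-page consisting of permanent cycles. By \cref{cor:EinftyBSS}, the associated graded of the Adams $E_2$-term with respect to the $\asig$-filtration is generated as an $\F_2$-algebra by the classes $\asig$, the $\usig^{2^{j+1}k}v_j$, $t_0$, and $e_1, e_2, \ldots$. I would first upgrade this to a statement about $E_2$ itself, checking that these classes lift to multiplicative generators of the Adams $E_2$-page, so that the collapse reduces to proving each listed generator is a permanent cycle.

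Two of these families are immediate from naturality. The Euler class $\asig$ and the unit $\usig^{\pm}$ come from $\pi_\star^{C_2}$ of the sphere via the unit map $S^0 \to BP_\R \wedge \OmRhoSRho$, and are therefore permanent cycles. The classes $\usig^{2^{j+1}k}v_j$ lie in the image of the coefficients $\pi_\star BP_\R$ under the $BP_\R$-linear structure map induced by the basepoint inclusion; since the Borel Adams spectral sequence for $BP_\R$ is known to collapse, these too are permanent cycles.

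The \emph{crux} is the geometric generators $t_0, e_1, e_2, \ldots$ originating in the homology of the space. Because $BP_\R \wedge \OmRhoSRho$ is Borel complete (\cref{prop:borelcomplete}), its geometric fixed points carry no information, so all detection must proceed through the restriction functor $\Res$. This functor induces a map from the Borel Adams spectral sequence to the classical $\HF$-based Adams spectral sequence converging to $BP_* \Omega^2 S^3$ --- the underlying space of $\OmRhoSRho$ being $\Omega^2 S^3$ --- carrying $t_0$ to Ravenel's class $x_0$ and each $e_i$ to the $y_i$ of \eqref{RavenelComputation}. Since the classical spectral sequence collapses, $\Res$ sends each $t_0, e_i$ to a permanent cycle, whence $\Res(d_r(t_0)) = d_r(\Res(t_0)) = 0$ and likewise for the $e_i$. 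Thus $d_r(t_0)$ and $d_r(e_i)$ are killed by $\Res$, and on a Borel-complete spectrum this forces them to be $\asig$-multiples; an induction reducing $\asig$-divisibility, using that $\asig$ and the coefficient classes are already permanent cycles, should then force these targets to vanish.

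The \emph{main obstacle} I anticipate is precisely this interplay between the Adams filtration and the $\asig$-filtration. Since \cref{cor:EinftyBSS} records only the $\asig$-associated graded of the Adams $E_2$-page, I must ensure both that a multiplicative generating set survives to genuine $E_2$-generators and that a class which is a permanent cycle modulo higher $\asig$-filtration is a permanent cycle on the nose. Handling this cleanly requires a double induction --- outer on $\asig$-divisibility and inner on the Adams page $r$ --- in which the restriction detection principle supplies the base case and the derivation property propagates permanence across products. The most delicate point will be verifying that no $\asig$-multiple can occur as a surviving differential target, equivalently that the kernel of $\Res$ leaves no room for Adams differentials beyond what the $\asig$-Bockstein spectral sequence has already resolved.
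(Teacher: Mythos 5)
Your overall strategy --- find multiplicative generators of the Adams $E_2$-page that are permanent cycles --- is the same as the paper's, and your treatment of $\asig$ and the classes $\usig^{2^{j+1}k}v_j$ (they come from $BP_\R$, whose Borel Adams spectral sequence collapses) matches the paper exactly. But there is a genuine gap in the crux step, the permanence of $t_0$ and the $e_i$. Showing that $\Res(d_r(e_i))=0$ only tells you that $d_r(e_i)$ is divisible by $\asig$ (and even that identification of $\ker(\Res)$ with $\mathrm{image}(\asig)$ needs to be justified at the level of $E_r$-pages, not just homotopy groups). Being an $\asig$-multiple is very far from being zero here --- the $E_2$-page contains plenty of nonzero $\asig$-divisible classes such as $\asig^k v_n$ --- and your proposed ``induction reducing $\asig$-divisibility'' cannot get off the ground: from $d_r(e_i)=\asig y$ you have no control over $\Res(y)$, so you cannot iterate to higher $\asig$-divisibility, and even infinite $\asig$-divisibility would not force vanishing without the finiteness statement of \cref{lem:rhoBSSevenness}(2). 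The paper closes these cases by entirely different means: for the $e_i$, the target of any differential lies in a degree of the form $j\rho-1$, where the $E_2$-page vanishes outright by \cref{lem:rhoBSSevenness}(1) (note this degree argument does \emph{not} apply to $t_0$, whose differentials land in degree $0$); for $t_0$, it is the fundamental class of the split Snaith summand $\mathrm{Sym}^1_\rho S^1\simeq S^1$, hence the image of a permanent cycle under a map of spectral sequences.

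A second, smaller gap: you correctly flag that \cref{cor:EinftyBSS} only describes the $\asig$-associated graded of $E_2$ and that the generating set must be lifted, but you do not supply the argument. A map of filtered rings that is surjective on associated graded need not be surjective; the paper's diagram chase requires that in each bidegree the $\asig$-filtration on $E_2$ has only finitely many nonzero quotients, which is precisely \cref{lem:rhoBSSevenness}(2). Without invoking that finiteness (or some substitute), the reduction of the collapse to the listed generators is incomplete.
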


\begin{rmk}
\label{rmk:hiddenextns}

We consider some hidden multiplications in the $\asig$-BSS in \cref{subsec:hiddenextensions}. 
    One could attempt to resolve all extensions in the $\asig$-BSS and describe the $E_2$-page of the Borel Adams SS for $BP_\R\wedge\OmRhoSRho$ explicitly as a module over that of $BP_\R$. However, to calculate the ${BP_\R}_\bigstar$-module ${BP_\R}_\bigstar\Omega^\rho S^{\rho+1}$, one would also need to resolve all extensions in the Borel Adams SS for $BP_\R\wedge\OmRhoSRho$. Since these extensions are not even known nonequivariantly (see \cite[Conjecture 3.4]{Ravenel93}), we do not attempt to resolve them here. However, we remark that \cref{prop:resinjective} suggests that the equivariant extensions may be equivalent to the nonequivariant ones.
\end{rmk}

These computations give us a number of immediate qualitative consequences for $BP_\R\wedge \OmRhoSRho$. Nonequivariantly, the Snaith splitting for double loop spaces gives a stable decomposition of $\Sigma^\infty_+\Omega^2S^3$ into a wedge
\[\Sigma^\infty_+\Omega^2S^3\simeq \bigvee\limits_{r\ge0}\Sigma^\infty\mathrm{Sym}^r_2S^1,\]
where $\mathrm{Sym}^r_2S^1=C_2(r)_+\wedge_{\Sigma_r}S^r$ and $C_2(r)$ is the configuration space of $r$ points in $\R^2$. We may therefore decompose $\Sigma^\infty_+\Omega^2S^3$ into an even and odd part
\begin{align*}
    L_{ev}&:=\bigvee\limits_{r\ge0}\Sigma^\infty\mathrm{Sym}^{2r}_2S^1\\
    L_{odd}&:=\bigvee\limits_{r\ge0}\Sigma^\infty\mathrm{Sym}^{2r+1}_2S^1
\end{align*}
and an immediate consequence of Ravenel's computation of $BP_*\Omega^2S^3$ is that \newline $BP\wedge L_{ev}$ has homotopy groups concentrated in even degrees and 
\[BP\wedge L_{odd}\simeq \Sigma BP\wedge L_{ev}.\]

We lift this result to the $C_2$-equivariant setting using the equivariant Snaith splitting
(see \cref{thm:Snaith})
\[\Sigma^\infty_{C_2}\OmRhoSRho\simeq \bigvee\limits_{r\ge0}\Sigma^\infty\mathrm{Sym}^r_
\rho S^1,\]
where $\mathrm{Sym}^r_\rho S^1=C_\rho(r)_+\wedge_{\Sigma_r}S^r$ and $C_\rho(r)$ is the configuration space of $r$ points in the representation $\rho$. Grouping the even and odd components together as $L_{ev}$ and $L_{odd}$ as before, we have the following.

\begin{maintheorem}[\cref{prop:evenoddsplit,prop:resinjective}]
The $C_2$-spectrum $BP_\R\wedge L_{ev}$ is strongly even and
\[BP_\R\wedge \OmRhoSRho\simeq BP_\R\wedge (L_{ev}\vee \Sigma L_{ev}) .\]
In particular, the restriction map
\[\mathrm{res}:(BP_\R)_{*\rho}\Omega^\rho S^{\rho+1}\to BP_{2*}\Omega^2S^3\]
is an isomorphism.
\end{maintheorem}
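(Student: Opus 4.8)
The plan is to deduce all three assertions from the equivariant Snaith splitting of \cref{thm:Snaith}, the Borel completeness of \cref{prop:borelcomplete}, and the computation recorded in \cref{cor:EinftyBSS,cor:adamscollapse}. Smashing $\Sigma^\infty_{C_2}\Omega^\rho S^{\rho+1}\simeq \bigvee_{r\ge 0}\mathrm{Sym}^r_\rho S^1$ with $BP_\R$ and grouping by the parity of $r$ gives
\[
BP_\R\wedge\Omega^\rho S^{\rho+1}\simeq (BP_\R\wedge L_{ev})\vee(BP_\R\wedge L_{odd}),
\]
and since Borel completeness is inherited by retracts, each wedge summand is again Borel complete. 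This is the key structural input: a map of Borel complete $C_2$-spectra is an equivalence as soon as it is an underlying equivalence, so the stable equivalences below may be checked after applying $\Res$, where Ravenel's computation of $BP_*\Omega^2S^3$ applies directly, while the genuinely equivariant fixed-point information is supplied by \cref{cor:EinftyBSS,cor:adamscollapse}.

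First I would show that $BP_\R\wedge L_{ev}$ is strongly even, i.e.\ that its underlying homotopy is even and that $\mathrm{res}\colon\piC_{k\rho}(BP_\R\wedge L_{ev})\to\pi^e_{2k}(BP_\R\wedge L_{ev})$ is an isomorphism for every $k$. On underlying spectra $\Res(BP_\R\wedge L_{ev})\simeq BP\wedge\bigvee_r\mathrm{Sym}^{2r}_2S^1$ computes the even-weight part $B=BP_*[y_1,y_2,\dots]/(r_1,r_2,\dots)$ of $BP_*\Omega^2S^3$, which is concentrated in even degrees; this gives underlying evenness. For the fixed-point condition I would read the $\rho$-graded homotopy off the collapsed Borel Adams spectral sequence: the even-weight summand contributes exactly the even classes among the generators of \cref{cor:EinftyBSS}, which occupy only the expected $\rho$-graded lines, and since $\mathrm{res}$ is filtration-preserving it is enough to verify that it is an isomorphism on the associated graded, giving both the restriction isomorphism and the vanishing $\piC_{k\rho-1}(BP_\R\wedge L_{ev})=0$. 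Crucially this does not require resolving the multiplicative extensions discussed in \cref{rmk:hiddenextns}.

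Next I would produce the equivalence $BP_\R\wedge L_{odd}\simeq\Sigma BP_\R\wedge L_{ev}$. The $E_\rho$-algebra structure on $\Omega^\rho S^{\rho+1}$ makes $\Sigma^\infty_{C_2}\Omega^\rho S^{\rho+1}$ a ring, and multiplication by the bottom class $S^1\to\Sigma^\infty_{C_2}\Omega^\rho S^{\rho+1}$ defines a $C_2$-equivariant map $\Sigma L_{ev}\to L_{odd}$ raising weight by one and compatible with the splitting. After smashing with $BP_\R$ its underlying effect on homotopy is multiplication by $x_0$, namely Ravenel's isomorphism $\Sigma B\xrightarrow{\ \cong\ }x_0B$ onto the odd-weight part; hence it is an underlying $BP_\R$-homology equivalence, and because both sides are Borel complete it is a $C_2$-equivalence. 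Combined with the splitting this yields $BP_\R\wedge\Omega^\rho S^{\rho+1}\simeq BP_\R\wedge(L_{ev}\vee\Sigma L_{ev})$.

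Finally, the restriction isomorphism follows formally: in degree $k\rho$ the summand $BP_\R\wedge\Sigma L_{ev}$ contributes $\piC_{k\rho-1}(BP_\R\wedge L_{ev})=0$, so $(BP_\R)_{k\rho}\Omega^\rho S^{\rho+1}\cong\piC_{k\rho}(BP_\R\wedge L_{ev})$, while on underlying homotopy the suspended summand lies in odd degrees, so $BP_{2k}\Omega^2S^3\cong\pi^e_{2k}(BP_\R\wedge L_{ev})$, and the map between the two is precisely the strong-evenness isomorphism. I expect the fixed-point half of strong evenness to be the main obstacle: one must match the $\rho$-graded part of the computed $E_\infty$-page with the underlying answer and check that restriction is an isomorphism there, which demands careful $RO(C_2)$-graded bookkeeping—in particular understanding why the equivariant relation $t_0^2=\usig e_1$ restricts to the classical $x_0^2=0$ without obstructing the comparison.
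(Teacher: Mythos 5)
Your treatment of the splitting is exactly the paper's: the map $t_0\colon \Sigma BP_\R\wedge L_{ev}\to BP_\R\wedge L_{odd}$ is a map of Borel-complete spectra (Borel completeness from \cref{prop:borelcomplete} passing to retracts) which is an underlying equivalence by Ravenel's Theorem~C, hence an equivalence. Where you diverge is in establishing the restriction isomorphism and strong evenness, and here your route is workable but heavier than what the paper does. You propose to compare the equivariant and nonequivariant Adams $E_\infty$-pages in degree $k\rho$ and invoke ``iso on associated graded implies iso'' — this is legitimate given the strong convergence of both spectral sequences, but it forces you to identify \emph{all} monomials of the $E_\infty$-page of \cref{cor:EinftyBSS} landing in degrees $k\rho$ (and to rule out contributions jumping in from the $\asig$-filtration of the $E_2$-page, a second layer of associated graded you pass through silently). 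That bookkeeping is precisely the degree argument of \cref{lem:rhoBSSevenness}: writing a monomial as $r\cdot\asig^\alpha t_0^\beta \usig^{2^{n+1}k}v_n$ and computing $c-w$ forces either $\alpha=\beta=k=0$ (so the degree-$k\rho$ part is spanned by monomials in the $e_i$'s and $v_j$'s alone, which visibly restrict isomorphically) or $\alpha+\beta\ge 2^{n+2}$, which kills the class. You flag this as the main obstacle but do not carry it out, so as written this is the one genuine hole. The paper sidesteps it with two cheaper observations: injectivity of $\mathrm{res}$ follows from $\ker(\mathrm{res})=\mathrm{image}(\asig)$ together with the vanishing of $(BP_\R)_{n\rho+\sigma}\Omega^\rho S^{\rho+1}$ (item (1) of \cref{lem:rhoBSSevenness} again, but only the vanishing statement, not the full identification of the $k\rho$-line); and surjectivity follows because $\mathrm{res}$ is a \emph{ring} map, so one only needs the algebra generators $y_n$ to lift, which they do since the $e_n$ are permanent cycles (\cref{cor:permanentcycles}, \cref{cor:adamscollapse}) detecting them. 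Also note your aside about $t_0^2=\usig e_1$ versus $x_0^2=0$ is a non-issue for this step once one argues via generators rather than the full associated graded. Your order of deductions is reversed relative to the paper (you prove strong evenness of $L_{ev}$ first and derive the restriction isomorphism for $\Omega^\rho S^{\rho+1}$ from it; the paper does the opposite), but either direction closes up once the degree-$k\rho$ comparison is in hand.
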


In addition to the real Brown--Peterson homology studied in this paper, there is also a notion of $G$-equivariant Brown--Peterson homology for any compact lie group $G$. In the case where $G = C_2$, the real Brown--Peterson spectrum $BP_\mathbb{R}$ differs from the spectrum $BP_{C_2}$, though they are both (genuine) $C_2$-spectra. 
One important difference is that $BP_G$ is complex-oriented, while $BP_\R$ is Real-oriented (see \cite{HK}). For our computation, the most important point is that the cohomology of $BP_\R$ can be identified as a module over the Steenrod algebra as 
\[
    \bH^\bigstar BP_\R \iso \cAC\!/\!/ \cEC,
\]
so that a change-of-rings formula gives a simplified description of the $E_2$-term for the Adams spectral sequence (see \cref{sec:E2Term}).
We are not aware of a similar description of $\bH^\bigstar BP_{C_2}$.
Another variant of our computation would be to consider alternative equivariant lifts of $\Omega^2 S^3$. The signed loop space $\Omega^{2\sigma}S^{1+2\sigma}$ is another interesting lift. Surprisingly, these two equivariant lifts agree! See \cite[Remark~1.3]{HW} or \cite[Lemma~3.1]{Kl}.

\begin{rmk}
    We have used the Adams spectral sequence to compute the $BP_\R$-homology of $\Omega^\rho S^{\rho+1}$, with $E_2$-page given by Ext groups over $\cEC$. Typically, a good way to understand such calculations is to first compute Ext over the subalgebras $\cEC(n)$ for small examples of $n$. In the cases of $n=0$ and $1$, these would be $E_2$-terms for Adams spectral sequences computing the $H\Z$ or $k\R$-homology of the loop space, respectively. However, it turns out that these computations are in some sense more complicated than that for Ext over $\cEC$. See also \cref{rmk:E(n)nonregular}.
\end{rmk}

\subsection{Acknowledgments}
The authors would like to thank Andrew Baker, Mark Behrens, Prasit Bhattacharya, Mike Hill, Tyler Lawson, Douglas Ravenel, and Alex Waugh for enlightening conversations. The first and third author would like to thank the Isaac Newton Institute for Mathematical Sciences, Cambridge, for support and hospitality during the programme Equivariant homotopy theory in context, where work on this paper was undertaken. This work was supported by EPSRC grant EP/Z000580/1. 
The first author was supported by the National Science Foundation under Grant No. DMS-2401918.
The second author was supported by the National Science Foundation under Grants No. DMS-2003204 and DMS-2403798 as well as the Simons Foundation under Grant No. MPS-TSM-00007067.
The third author was supported by the National Science Foundation under Grant No. DMS-2135884.

\section{Notation}

\begin{itemize}
    \item We write $\bH$ for the $C_2$-equivariant Eilenberg-Mac~Lane spectrum $H_{C_2} \underline{\F}_2$ with constant Mackey functor coefficients
    \item We write $\bH^h$ for the Borel completion of $\bH$, i.e. $\bH^h=F({EC_2}_+,\bH)$.
    \item We write $\bMC$ and $\bH^h_\bigstar$ for the $RO(C_2)$-graded coefficients of these theories. See \cref{sec:C2Steenrod}.
    \item We write $H$ for the (nonequivariant) Eilenberg-Mac~Lane spectrum $H\F_2$.
    \item We write $\Res$ for the restriction, or underyling, functor $\Res\colon\Sp_{C_2} \to \Sp$.
    \item  We also write 
$
    \Res\colon \bH^{n+k\sigma}(X)
    \rtarr \rH^{n+k}(\Res X).
$
    \item We will denote by $\sigma$ the sign representation of $C_2$ and by $\rho$ the regular representation. This splits as $\rho \iso 1 \oplus \sigma$, where $1$ denotes the 1-dimensional trivial representation.
    \item $RO(C_2)$ is the real representation ring of $C_2$. This is a free abelian group on generators the trivial representation $1$ and the sign representation $\sigma$. 
    \item At times, we will need to consider sign and regular representations for two different groups of order two, which we will denote by $C_2$ and $\Sigma_2$. In these situations, we will write $\sigC$ and $\rhoC$ for the representations of $C_2$ and $\sigS$ and $\rhoS$ for the $\Sigma_2$-representations.
    \item  We write $\asig\colon S^{-\sigma}\to S^0$ for the desuspension of the inclusion of fixed points $S^0 \into S^\sigma$. We will use the same notation for its Hurewicz image in $\bH_{\sigma}$ and $\bH^h_{-\sigma}$. 
    \item  The orientation class in $\bH_1 S^\sigma$ corresponds, under the suspension isomorphism, to a class $\usig \in \bH_{1-\sigma}$. We use the same notation for its image in $\bH^h_{1-\sigma}$.
    \item We write $\cxi_i$ for the generators of the (nonequivariant) dual Steenrod algebra (see \cref{sec:dualSteenrod}) and $\exi_i$, $\etau_n$ for the generators of the $C_2$-equivariant dual Steenrod algebra (see \cref{sec:C2Steenrod})
    \item As noted in the introduction, $BP_\R$ is the Real Brown-Peterson spectrum introduced in \cite{HK}. By \cite{QZ}, this is an $E_\rho$-ring spectrum.
    \item We will also make use of the Real complex cobordism spectrum $MU_\R$. This is a $C_2$-$E_\infty$-ring spectrum by \cite{HHR}.
\end{itemize}

\section{Background}

{
We begin by reviewing the non-equivariant versions of tools to be used throughout this article.
}

\subsection{The dual Steenrod Algebra}
\label{sec:dualSteenrod}

Recall that the dual Steenrod algebra at the prime $p = 2$ is 
\[
    \cA_* \cong \F_2 [\cxi_1, \cxi_2, \cdots ]
    \]
    where $\vert \cxi_i \vert = 2^i - 1$
\cite{Milnor58}.

The generators $\cxi_i$ are defined using the (completed) right coaction of $\cA_*$ on the cohomology of the classifying space $B \Sigma_2$
\[
\hat \psi : \rH^* (B {\Sigma_2}) \to \rH^* (B {\Sigma_2} ) \widehat \otimes_{H_*} \cA_*
\]
according to the formula
\[
    \hat \psi(t) = \sum_{i\geq 0} t^{2^i} \otimes \cxi_i,
\]
where $t\in \rH^1(B\Sigma_2)$ is the generator.

\subsection{Dyer-Lashof operations}

An $E_n$-algebra, such as an $n$-fold loop space, inherits an action of Dyer-Lashof operations on its homology. 
See \cite[Section 5]{Law} for a clear account.
These are operations
\[
    Q^{k+j}\colon \rH_k X \rtarr \rH_{2k+j} X
\]
for $0 \leq j \leq n-1$. It is sometimes convenient to use the alternative notation
\[
    Q_{j}\colon \rH_k X \rtarr \rH_{2k+j} X.
\]
The operations $Q_j$ are additive for $j \leq n-2$.
However, in general, the top operation $Q_{n-1}$ fails to be additive, and this failure of additivity is measured by an operation known as the Browder bracket.
In lower index notation, the {\bf Cartan formula} can neatly be expressed as
\[
    Q_j(x\cdot y) = \sum_r Q_r(x) \cdot Q_{j-r}(y)
\]
when $j \leq n-2$.

The homology groups also have a natural left action of the Steenrod algebra $\cA$:
\[
    \Sq_i \colon \rH_k X \rtarr \rH_{k-i} X,
\]
which can be converted into a right action by use of the anti-automorphism $\chi\colon \cA \to \cA$. The right action of $\cA$ is related to the left action of the Dyer-Lashof operations according to the {\bf Nishida relations}: 
\[
    Q_j(x) \cdot \Sq_r = \sum_\ell \binom{\deg{x} +j-r}{r-2\ell} Q_{j-r+\ell}\left( x\cdot \Sq_r \right),
\]
again for $j \leq n-2$.

\begin{rmk}
    There is also a Cartan formula and Nishida relation for the top operation $Q_{n-1}$, though it involves the Browder bracket. If the Browder bracket vanishes on $\rH_* X$, then the Cartan formula and Nishida relation for $Q_{n-1}$ take the same form as those for the lower $Q_j$'s.
\end{rmk}

Alternatively, the action of the Dyer-Lashof operations can be related to the coaction of the dual Steenrod algebra $\cA_*$ via the {\bf co-Nishida relations}. In order to express these, recall that the $E_n$-extended power 
\[
    \Ptwo_n(S^k) = \mathcal{C}_n(2)_+ \wedge_{\Sigma_2} S^{k \rho_{\Sigma_2}},
\]
where $\mathcal{C}_n$ is an $E_n$-operad,
is equivalent to the suspended stunted projective space $\Sigma^k \R\bP^{k+n-1}_k$. Write 
\[
    e_{2k+j} \in \tilde\rH_{2k+j} \Ptwo_n(S^k), \quad \text{where }j \in \{0,\dots,n-1\},
\]
for the homology generators. 
Suppose that the right  $\cA_*$-coaction on $\tilde\rH_* \Ptwo_n S^k$ is given by $\psi_R(e_{2k+j}) = \sum_i e_{2k+i} \otimes a_i$. Then the co-Nishida relations \cite[(2.6)]{Baker15} are
\begin{equation}
\label{eq:classicalcoNishida}    
    \psi_R Q_j x = \sum_i (1 \otimes a_i) \cdot Q_i(\psi_R x).
\end{equation}
Note that evaluation of $Q_i(\psi_R x)$ involves the Cartan formula.

We will primarily work with the coaction not by $\cA_*$ but rather by the quotient $\cE_* \iso E(\xi_1,\xi_2,\dots)$. This is the quotient of $\cA_*$ by the ideal generated by the $\xi_i^2$.
We have an analogous co-Nishida relation for the $\cE_*$-coation:
\begin{equation}
\label{eq:classicalEcoNishida}
    \psi_R^{\cE_*} Q_j x = q\left(\sum_i (1 \otimes a_i) \cdot Q_i(\psi_R^{\cE_*} x)\right),
\end{equation}
where $q\colon \cA_* \to \cE_*$ is the quotient.
This follows because the Dyer-Lashof operations $Q_i$ descend along the quotient map $q$, since they preserve the ideal $(\xi_1^2,\xi_2^2,\dots)$. 
The latter follows from the formula
\[
    Q_j \xi_k^2 = 
    \begin{cases}
        (Q_{j/2} \xi_k)^2 & j \text{ even} \\
        0 & j \text{ odd,}
    \end{cases}
\]
which is a consequence of the Cartan formula.

\subsection{{The $\cA_*$-comodule ${\rH_* \Omega^2 S^3}$}}
\label{AcomodReview}

The homology of $\Omega^2 S^3$ is 
\[
	\rH_*(\Omega^2 S^3) \iso \F[x_1,x_2,x_3,\dots], 
\]
with $x_i$ in degree $2^i-1$. 
The space $\Omega^2 S^3$ is visibly a double loop space, but in fact it is a triple loop space, as $S^3$ is a topological group.
Thus its homology comes equipped with an action by Dyer-Lashof operations $Q_0$, $Q_1$, and $Q_2$. We will not make use of the top operation, $Q_2$.
Araki and Kudo showed \cite{AK,DL} that the generators are given by (what are now called) Dyer-Lashof operations on previous generators. For example, 
\[
	x_2 = Q^2 x_1 = Q_1 x_1, \qquad x_3 = Q^4 x_2 = Q_1 x_2, \qquad \text{etc.} 
\]
The (co-)Nishida relations can then be used to deduce the $\cA$-module, or $\cA_*$-comodule, structure on $\rH_*(\Omega^2 S^3)$. 

According to \eqref{eq:classicalEcoNishida}, we can inductively work out the $\cA_*$-coaction on the $x_i$'s.
For this, the coaction on the $E_2$-extended power $\Ptwo_2(S^k) \iso \Sigma^k \R\bP_k^{k+1}$ is needed. 
The homology class $e_{2k}$ is primitive, and the coaction on $e_{2k+1}$
 is given by
\[
    \psi_R(e_{2k+1}) = 
    \begin{cases}
        e_{2k+1} \otimes 1 + e_{2k} \otimes \xi_1 & k \text{ odd} \\
        e_{2k+1} \otimes 1 & k \text{ even.}
    \end{cases}
\]

\begin{prop}
    The right $\cA_*$-coaction on $\rH_*(\Omega^2 S^3)$ is given by 
    \begin{equation}
    \psi_R x_i = \sum_{k=0}^{i-1} x_{i-k}^{2^k} \otimes \xi_k.
\end{equation}
\end{prop}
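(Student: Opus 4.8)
The plan is to prove the coaction formula $\psi_R x_i = \sum_{k=0}^{i-1} x_{i-k}^{2^k} \otimes \xi_k$ by induction on $i$, using the fact that the generators are built iteratively via $x_{i+1} = Q_1 x_i$ together with the $\cE_*$-co-Nishida relation \eqref{eq:classicalEcoNishida}. The base case $i=1$ records that $x_1$ is primitive, i.e. $\psi_R x_1 = x_1 \otimes 1$, which is forced since $x_1$ lives in degree $1$ and $\cE_*$ is trivial in positive degrees below $1$ (indeed $|\xi_1|=1$, but $x_1 \otimes \xi_1$ would land in degree $0$ in the space factor, and there is no such generator). I would state this carefully as the inductive anchor.

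For the inductive step, assume the formula holds for $x_i$, so in particular $x_i$ lives in degree $2^i - 1$, which is odd. I would then apply \eqref{eq:classicalEcoNishida} to $Q_1 x_i$. This requires the coaction on the extended power $\Ptwo_2(S^k) \iso \Sigma^k \R\bP_k^{k+1}$ with $k = 2^i - 1$. Since $k$ is odd, the quoted formula gives $\psi_R(e_{2k+1}) = e_{2k+1}\otimes 1 + e_{2k}\otimes \xi_1$, so in the notation $\psi_R(e_{2k+j}) = \sum_i e_{2k+i}\otimes a_i$ we read off $a_0 = \xi_1$, $a_1 = 1$ for the $j=1$ generator. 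Plugging into the co-Nishida relation yields
\[
    \psi_R Q_1 x_i = q\bigl( (1\otimes \xi_1)\cdot Q_0(\psi_R x_i) + (1\otimes 1)\cdot Q_1(\psi_R x_i)\bigr).
\]
I would then expand $Q_0(\psi_R x_i)$ and $Q_1(\psi_R x_i)$ using the inductive formula for $\psi_R x_i$ and the Cartan formula. The key computational inputs are that $Q_0$ is the squaring operation on classes in the appropriate degree (so $Q_0(x_{i-k}^{2^k}\otimes \xi_k) = x_{i-k}^{2^{k+1}}\otimes \xi_k^2$, which dies in $\cE_*$) and that $Q_1$ acts on the leading term $x_i\otimes 1$ to produce $x_{i+1}\otimes 1$. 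One then collects the surviving terms: the $Q_1$ of the leading term gives $x_{i+1}\otimes 1$, the $\xi_1$-twisted $Q_0$ of the leading term $x_i \otimes 1$ contributes $x_i^2 \otimes \xi_1 = x_{(i+1)-1}^{2}\otimes \xi_1$, and the reindexed contributions from the higher terms assemble into $\sum_{k} x_{(i+1)-k}^{2^k}\otimes \xi_k$.

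The main obstacle will be the bookkeeping of the Cartan formula for $Q_1$ applied to a sum of the form $\sum_k x_{i-k}^{2^k}\otimes \xi_k$: one must track cross-terms $Q_r(x_{i-k}^{2^k}) \otimes (\text{coaction factor})$ and verify that everything except the desired summands is killed upon applying the quotient $q\colon \cA_* \to \cE_*$, which annihilates all $\xi_j^2$. The degree constraints (using $|x_{i-k}^{2^k}| = 2^k(2^{i-k}-1) = 2^i - 2^k$ and $|\xi_k| = 2^k - 1$) should force the available operations and show that the only surviving instances of $Q_0$ produce squares that vanish in $\cE_*$, while the only surviving instance of $Q_1$ raising the index comes from the top class. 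I would verify that the reindexing $k \mapsto k$ in the first sum and $k \mapsto k-1$ in the $\xi_1$-twisted sum match up to give exactly the claimed formula for $\psi_R x_{i+1}$, thereby closing the induction. A careful check that the coaction is coassociative and counital (which it must be, being an honest comodule structure) serves as a consistency verification on the final formula.
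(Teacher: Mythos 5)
Your overall strategy (induction on $i$, the co-Nishida relation for $Q_1$, the Cartan formula) is the same as the paper's, but there is a genuine gap: you run the entire computation through the quotient $q\colon \cA_* \to \cE_*$, i.e.\ you invoke \eqref{eq:classicalEcoNishida} and discard every term containing a $\xi_j^2$ on the grounds that it ``dies in $\cE_*$''. That only establishes the formula for the induced $\cE_*$-coaction, which is the proposition modulo the ideal $(\xi_1^2,\xi_2^2,\dots)$. The proposition as stated is about the $\cA_*$-coaction, and knowing $\psi_R x_i$ modulo that ideal does not determine it: for instance, nothing in your argument rules out a term $x_1\otimes\xi_1^2$ in $\psi_R x_2$ (both candidate terms sit in total degree $3$). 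The paper instead works in $\cA_*$ via \eqref{eq:classicalcoNishida}: the dangerous cross-terms $x_{i-k}^{2^{k+1}}\otimes\xi_1\xi_k^2$ appear exactly twice --- once from $Q_0(x_{i-k}^{2^k})\otimes Q_1(\xi_k)$ using $Q_1(\xi_k)=\xi_{k+1}+\xi_1\xi_k^2$, and once from $(1\otimes\xi_1)\cdot Q_0\bigl(x_{i-k}^{2^k}\otimes\xi_k\bigr)$ --- and therefore cancel mod $2$. That cancellation is precisely the content of the step you are quotienting away, and it is what you need to keep to prove the statement as written. Note also that the induction itself requires the $\cA_*$-level input: to evaluate $Q_1(\psi_R x_i)$ and $(1\otimes\xi_1)Q_0(\psi_R x_i)$ one must know $\psi_R x_i$ as an element of $\rH_*(\Omega^2S^3)\otimes\cA_*$, not merely its image in $\cE_*$, so an induction carried out entirely in $\cE_*$ cannot be upgraded afterwards.

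A smaller point: your justification of the base case is off. $\rH_0(\Omega^2S^3)=\F_2$ is not zero, so ``there is no such generator in degree $0$'' is not the reason $x_1$ is primitive. The correct degree argument is that $x_1$ lies in the reduced homology of the (based, connected) loop space, which is a subcomodule vanishing in degree $0$, so the only term available in $\psi_R x_1$ is $x_1\otimes 1$; the counit axiom then fixes its coefficient. The conclusion is right, but the reasoning as you wrote it is not.
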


\begin{proof}
We induct on $i.$ The base case for holds for degree reasons. Suppose the above formula holds for $x_{i}.$ Then the coNishida relations \eqref{eq:classicalEcoNishida} imply
\begin{align*}
    \psi_R(x_{i + 1}) &  = Q_1(\psi_R(x_i)) + (1 \otimes \xi_1 ) Q_0 (\psi_R x_i) \\
    & = Q_1 \left(\sum_{k=0}^{i-1} x_{i-k}^{2^k} \otimes \xi_k  \right) + ( 1 \otimes \xi_1) Q_0 \left( \sum_{k=0}^{i-1} x_{i-k}^{2^k} \otimes \xi_k \right ) \\
    & = Q_1 (x_i) \otimes \xi_0 + \sum_{k=0}^{i-1} Q_0( x_{i-k}^{2^k}) \otimes Q_1(\xi_k) +  \sum_{k=0}^{i-1} Q_0(x_{i-k}^{2^k}) \otimes \xi_1 Q_0(\xi_k)  \\
    & = x_{i + 1} \otimes 1 + \sum_{k=0}^{i-1} \left( x_{i-k}^{2^{k + 1}} \otimes (\xi_{k + 1} + \xi_1 \xi_k^2) +  x_{i-k}^{2^{k + 1}} \otimes \xi_1 \xi_k^2 \right) \\
    & = \sum_{k=0}^i x_{i+1-k}^{2^k} \otimes \xi_k,
\end{align*}
where in the third line we apply the Cartan formula and in the fourth line we use that $Q_1(\xi_k) = \xi_{k + 1} + \xi_1 \xi_k^2$ (see, for instance \cite[Lemma 4.4]{Baker15}).
\end{proof}

\section{The $C_2$-equivariant Steenrod algebra}
\label{sec:C2Steenrod}

\begin{figure}
    \centering
    \includegraphics{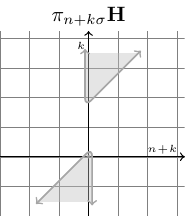}
    \qquad \qquad
    \includegraphics{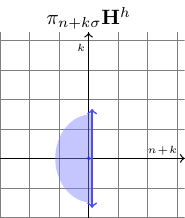}
    \caption{
    The $RO(C_2)$-graded coefficients $\bMC$ and $\bMC^h$, as described in \eqref{eq:Hcoeffs}. The class $\asig$ is in degrees $n+k=-1$ and $k=-1$ while $\usig$ is in degrees $n+k=0$ and $k=-1$.
    } 
    \label{fig:M2}
\end{figure}

We now review the $C_2$-equivariant Steenrod algebra as well as the Borel-equivariant form. These are algebras over $\bMC$ and $\bMC^h$, respectively.
We have isomorphisms
\begin{equation}
\label{eq:Hcoeffs}    
        \bMC \iso \F_2[\asig,\usig] \oplus \frac{\F_2[\asig,\usig]}{(\asig^\infty,\usig^\infty)}
        \qquad \text{and} \qquad
        \bMC^h \iso \F_2[\asig,\usig^{\pm 1}],
\end{equation}
and these are displayed in \cref{fig:M2}. The summand $\frac{\F_2[\asig,\usig]}{(\asig^\infty,\usig^\infty)}$ of $\bMC$ is often referred to as the ``negative cone''.

Recall that the equivariant dual Steenrod algebra, displayed in \cref{fig:AC}, is
\[
    \cAC_\bigstar \iso \bH_\bigstar[\etau_0,\etau_1,\dots,\exi_1,\exi_2,\dots]/
    (\etau_i^2 + \usig \exi_{i+1} + \asig \etau_0 \exi_{i+1} + \asig \etau_{i+1})
\]
where $\vert \etau_i \vert = 2^i \rho - \sigma$ and $\vert \exi_i \vert = (2^i - 1 )\rho$ \cite{HK}. 
We will also make use of the Borel equivariant dual Steenrod algebra
\[
    \cAhC_\bigstar \iso \bH_\bigstar^h[\etau_0,\etau_1,\dots,\exi_1,\exi_2,\dots]^{\,\widehat{}}_{\asig}/
    (\etau_i^2 + \usig \exi_{i+1} + \asig \etau_0 \exi_{i+1} + \asig \etau_{i+1}),
\]
which is displayed in \cref{fig:AhC}.
While $(\bH_\bigstar,\cAC_\bigstar)$ is a Hopf algebroid in $RO(C_2)$-graded $\F_2$-vector spaces, for flatness one must regard $(\bH_\bigstar^h,\cAhC_\bigstar)$ as a Hopf algebroid in the category $\mathcal{M}_{\asig}$ of $\asig$-complete $RO(C_2)$-graded $\F_2$-vector spaces. The map $(\bH_\bigstar,\cAC_\bigstar)\to (\bH^h_\bigstar,\cAhC_\bigstar)$ is obtained by inverting $\usig$ and completing at $\asig$, as shown by Hu--Kriz \cite[Corollary 6.40 and Theorem 6.41]{HK}, see also \cite[Theorem 2.14]{LSWX}. We refer the reader to \cite[Section 2]{LSWX} for more details on these two variants of the $C_2$-equivariant dual Steenrod algebra.

We recall now the definition of the generators $\etau_i$ and $\exi_i$ via the (completed) right coaction of $\cAC_\bigstar$ on the cohomology of the equivariant classifying space 
\[
    B_{C_2}\Sigma_2 \simeq S\big((\rhoC\otimes \sigS)^\infty\big)/\Sigma_2,
\]
where $\rhoC$ is the regular representation of $C_2$ and $\sigS$ is the sign representation of $\Sigma_2$.

\begin{figure}
    \centering
    \includegraphics[width=0.875\linewidth]{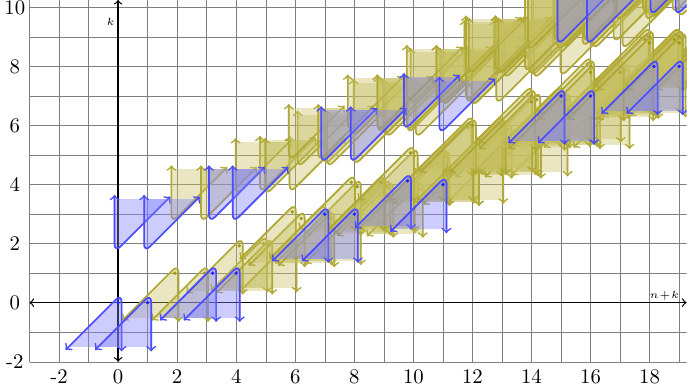}
    \caption{The $C_2$-equivariant dual Steenrod algebra $\cAC_{n+k\sigma}$, using the ``motivic'' grading in which the vertical direction indicates multiples of $\sigma$ and the horizontal is the underlying topological dimension. Here $\cEC_\bigstar$ is indicated in blue. Each copy of $\bMC$ contributes a ``positive'' cone (pointing down) and a ``negative'' cone (point up).} 
    \label{fig:AC}
\end{figure}

\begin{figure}
    \centering
    \includegraphics[width=0.875\linewidth]{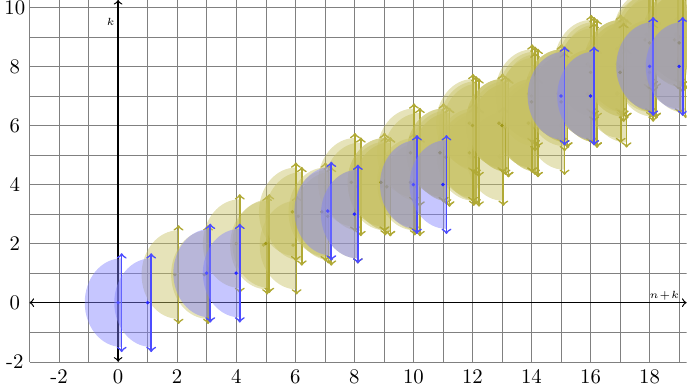}
    \caption{The Borel $C_2$-equivariant dual Steenrod algebra $\cAhC_{n+k\sigma}$, using the ``motivic'' grading in which the vertical direction indicates multiples of $\sigma$ and the horizontal is the underlying topological dimension. Here $\cEhC_\bigstar$ is indicated in blue. 
    Each copy of $\bH_\bigstar^h$ contributes a left half-plane, the $\usig$-localization of the positive cone in $\bMC$.}
    \label{fig:AhC}
\end{figure}

\begin{prop}[{\cite[Lemma~6.27]{HK},\cite[Theorem~6.10]{Voev}}] \label{prop:hom classifying space}
    The cohomology of $B_{C_2}\Sigma_2$ is
    \[
        \bH^\bigstar(B_{C_2}{\Sigma_2})\iso \bMC[c,d]/(c^2=\asig c+\usig d),
    \]
    where $\deg c=\sigma$ and $\deg d = 1+\sigma$. The class $d$ is the restriction of the Euler class of the tautological complex line bundle on $B_{C_2}S^\sigma$, 
    and the Bockstein applied to $c$ is $d$.
\end{prop}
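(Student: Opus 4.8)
The plan is to identify $B_{C_2}\Sigma_2$ with the infinite $C_2$-equivariant real projective space $\bP_\R(\rho^\infty)=\operatorname{colim}_n\bP_\R(\rho^n)$ of lines in the complete universe $\rho^\infty$, equipped with its tautological real line bundle $\gamma$. Complexification $\ell\mapsto\C\ell$ gives a map $q\colon\bP_\R(\rho^\infty)\to\bP_\C(\rho^\infty)=B_{C_2}S^\sigma$ exhibiting the source as the fiberwise real projectivization of the tautological Real line bundle, with fiber $\bP_\R(\C)\iso S^\sigma$. I would take $c:=e(\gamma)\in\bH^\sigma$ to be the $\sigma$-twisted mod-$2$ Euler class of $\gamma$, whose twist (coming from the universe) accounts for its degree, and define $d:=q^*e_\C\in\bH^\rho$ to be the pullback of the Real Euler class of the tautological complex line bundle on $B_{C_2}S^\sigma$, which lies in degree $1+\sigma=\rho$ by Real-orientability of $\bH$.

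The first step is the module and multiplicative structure. Since $\bH$ is Real-oriented, $\bH^\bigstar\bP_\C(\rho^\infty)\iso\bMC[d]$ with $\lvert d\rvert=\rho$, and I would run the equivariant projective bundle theorem for $q$ (using the $\sigma$-twisted Euler class $c$) to present $\bH^\bigstar B_{C_2}\Sigma_2$ as the free $\bMC[d]$-module on $\{1,c\}$, i.e. a free $\bMC$-module on $\{d^n,\,c\,d^n : n\ge0\}$; this is corroborated by the $C_2$-CW structure on $\bP_\R(\rho^\infty)$, which has a single cell in each degree $n\rho$ and $n\rho+\sigma$. Granting freeness, the Bockstein claim is then immediate: the only $\bMC$-generator in degree $\rho$ is $d$ (as $\bMC_1=\bMC_\rho=0$), so $\beta(c)=\epsilon\, d$ for some $\epsilon\in\F_2$, and restriction to the underlying $\R\bP^\infty$, where $c\mapsto t$ and hence $\beta(c)\mapsto\Sq^1 t=t^2$ while $d\mapsto t^2$, forces $\epsilon=1$, giving $\beta(c)=d$.

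It remains to express $c^2\in\bH^{2\sigma}$, which is not a generator, in terms of the generators of that degree; the only possibilities give
\[
c^2=\lambda\,\asig c+\mu\,\usig d+\nu\,\asig^2,\qquad \lambda,\mu,\nu\in\F_2.
\]
Restriction to the underlying $\R\bP^\infty$ (where $\asig\mapsto0$, $\usig\mapsto1$, $c\mapsto t$, $d\mapsto t^2$) forces $\mu=1$. Applying $\beta$ and using $\beta(c)=d$, $\beta(\usig)=\asig$, $\beta(\asig)=0$, $\beta(d)=0$, together with $\beta(c^2)=0$, gives $(\lambda+\mu)\,\asig d=0$, whence $\lambda=1$ by freeness. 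Finally, restricting to the sign-fixed subspace $\bP_\R(\sigma^\infty)\into B_{C_2}\Sigma_2$, over which $\gamma\iso L\otimes\sigma$ for the ordinary tautological bundle $L$ and the equivariant Euler-class formula reads $c|=\asig+\usig t$, pins down $\nu=0$. Thus $c^2=\asig c+\usig d$, and since inverting $\usig$ recovers $d=\usig^{-1}(c^2+\asig c)$, the free module $\bMC[d]\{1,c\}$ is exactly $\bMC[c,d]/(c^2+\asig c+\usig d)$.

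The genuinely equivariant content — and the step I expect to be hardest — is the $RO(C_2)$-graded bookkeeping behind both halves: verifying the projective bundle theorem (equivalently, the collapse of the cellular spectral sequence with no hidden extensions across the negative cone of $\bMC$) and computing the restrictions of $c$ and $d$ to the two fixed projective subspaces $\bP_\R(1^\infty)\sqcup\bP_\R(\sigma^\infty)$. These equivariant Euler-class computations are exactly what distinguish the answer from a naive base change of the classical $\F_2[t]$, and they are the content packaged in the cited work of Hu--Kriz and Voevodsky; under the dictionary $\usig\leftrightarrow\tau$, $\asig\leftrightarrow\rho$ the relation $c^2=\asig c+\usig d$ is precisely Voevodsky's computation of the motivic cohomology of $B\mu_2$.
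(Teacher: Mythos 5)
The paper does not prove this proposition at all: it is imported verbatim from Hu--Kriz \cite[Lemma~6.27]{HK} and Voevodsky \cite[Theorem~6.10]{Voev}, and the only related material the paper itself establishes is the collection of restriction and fixed-point formulas in \cref{C2-to-classical-classifying-space}. So there is no in-paper argument to compare against; what you have written is a reconstruction of the standard proof from the cited sources. Its architecture is sound: the identification $B_{C_2}\Sigma_2\simeq \bP_\R(\rho^\infty)$, the cell structure with one cell in each degree $n\rho$ and $n\rho+\sigma$, the resulting ansatz $c^2=\lambda\,\asig c+\mu\,\usig d+\nu\,\asig^2$ (correct, since $\bMC$ vanishes in the other relevant degrees), and the determination of $\mu$ by restriction to the underlying space and of $\lambda$ by the Bockstein (granting $\beta(\usig)=\asig$ and $\beta(d)=0$, both standard) are all fine. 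But be aware that the two genuinely hard inputs --- freeness of $\bH^\bigstar\bP_\R(\rho^\infty)$ over $\bMC$ on $\{d^n,\,cd^n\}$ (equivalently the collapse you mention, with no interference from the negative cone), and the equivariant Euler-class restriction formulas --- are exactly what you defer, and exactly what the two citations contain. The proposal therefore repackages the dependence on Hu--Kriz/Voevodsky rather than eliminating it, which is acceptable for a result the paper itself only cites.

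One step has a genuine gap as written: the determination of $\nu$. You restrict to $\bP_\R(\sigma^\infty)$ and invoke $c|=\asig+\usig t$, but (i) to conclude $\nu=0$ you also need the restriction of $d$ to that component, which you never compute (it is $\usig t^2+\asig t$, consistent with $\mFix(d)=\asig(t,t)$ in \cref{C2-to-classical-classifying-space}), and (ii) the paper points out in \cref{cAmbiguous} and the remark following \cref{C2-to-classical-classifying-space} that $c$ is only well defined up to adding $\asig$, so the fixed-point value of $c$ is itself ambiguous and your formula $c|=\asig+\usig t$ is a choice requiring justification. A cheaper and choice-independent fix: restrict to a $C_2$-fixed basepoint. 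There $d$ restricts to $0$ since $\bH^{1+\sigma}(\mathrm{pt})=0$, while $c$ restricts to $\epsilon\,\asig$ for some $\epsilon\in\F_2$, so the relation becomes $\epsilon\,\asig^2=\epsilon\,\asig^2+\nu\,\asig^2$ and $\nu=0$ for either value of $\epsilon$, with no Euler-class computation needed.
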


\begin{rmk}
    \label{cAmbiguous}
    As explained in \cite[Proof of Theorem~2.12]{LSWX}, the class $c$ is not well-defined. There are two possible choices for $c$, whose difference is the element $a$. 
    However, the choice has no impact on the relation $c^2=\asig c+\usig d$ or the coaction on $c$ given below.
\end{rmk}

Then the completed coaction 
\[
    \hat\psi \colon \bH^\bigstar(B_{C_2}{\Sigma_2}) \rtarr \bH^\bigstar(B_{C_2}{\Sigma_2}) \widehat\otimes_{\bMC} \cAC_\bigstar
\]
 defines the elements $\etau_n$ and $\exi_n$ via the formulas
\begin{equation}
    \label{c-coaction}
    \hat\psi(c) = c\otimes 1 + \sum_{n \geq 0} d^{2^n} \otimes \etau_n
\end{equation}    
and
\begin{equation}
    \label{d-coaction}
    \hat\psi(d) = \sum_{n \geq 0} d^{2^n} \otimes \exi_n.
\end{equation}

\begin{lemma}
\label{lemma:xicomesfromMUR}
    For all $n\ge 1$, the class $\exi_n\in \cAC_\bigstar$ is in the image of the map
    \[{MU_\R}_\bigstar MU_\R\to \bH_\bigstar \bH= \cAC_\bigstar\]
    induced by the Postnikov truncation $MU_\R\to \bH$.
\end{lemma}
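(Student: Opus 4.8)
The plan is to realize each $\exi_n$ as the image of an explicit element of ${MU_\R}_\bigstar MU_\R$, using the Real orientation of $MU_\R$ together with naturality of the $\cAC_\bigstar$-coaction along the Postnikov truncation $q\colon MU_\R\to\bH$. Recall from \cref{prop:hom classifying space} that $\exi_n$ is defined through the coaction $\hat\psi(d)=\sum_{n\ge 0}d^{2^n}\otimes\exi_n$ on the Euler class $d\in\bH^\bigstar(B_{C_2}\Sigma_2)$, and that $d=j^*e$ is the restriction, along the map $j\colon B_{C_2}\Sigma_2\to B_{C_2}S^\sigma$, of the Euler class $e$ of the tautological Real line bundle. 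The idea is to lift $e$ to $MU_\R$-cohomology, compute its coaction there, and reduce.

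Since $MU_\R$ is Real-oriented \cite{HK}, the class $e$ is the reduction $q(x)$ of a Real orientation $x\in{MU_\R}^\rho(B_{C_2}S^\sigma)$, and ${MU_\R}^\bigstar(B_{C_2}S^\sigma)\iso{MU_\R}_\bigstar[[x]]$ is a completed power series ring. As in the nonequivariant computation of $MU^*\C\bP^\infty$, the right ${MU_\R}_\bigstar MU_\R$-coaction on this ring is governed by the universal strict isomorphism of Real formal group laws, and so has the form
\[
\psi(x)=\sum_{i\ge 0}x^{i+1}\otimes\bar b_i,\qquad \bar b_0=1,
\]
where the $\bar b_i\in{MU_\R}_\bigstar MU_\R$ are the Real analogues of the standard polynomial generators of $MU_* MU$ (Araki, Hu--Kriz \cite{HK}). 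This is the single input I would cite rather than reprove.

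Setting $\bar d:=j^*x$, so that $q(\bar d)=d$, naturality of the coaction in the space variable $j$ and in the spectrum variable $q$ yields
\[
\hat\psi(d)=\sum_{i\ge 0}d^{i+1}\otimes q(\bar b_i).
\]
Comparing with the defining formula $\hat\psi(d)=\sum_{n\ge 0}d^{2^n}\otimes\exi_n$, and using that by \cref{prop:hom classifying space} the ring $\bH^\bigstar(B_{C_2}\Sigma_2)=\bMC[c,d]/(c^2=\asig c+\usig d)$ is free over $\bMC$ on the monomials $\{c^\varepsilon d^m\}$ — so the powers $d^m$ are linearly independent — I can match coefficients termwise. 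This gives $q(\bar b_{2^n-1})=\exi_n$ for every $n$, with $q(\bar b_i)=0$ whenever $i+1$ is not a power of $2$; the degree $|\exi_n|=(2^n-1)\rho$ is consistent with the index $2^n-1$. In particular each $\exi_n$ lies in the image of $q\colon{MU_\R}_\bigstar MU_\R\to\cAC_\bigstar$, which is the assertion.

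The main obstacle is the one cited input: verifying that ${MU_\R}^\bigstar(B_{C_2}S^\sigma)$ is the expected power series ring and that its comodule structure is the universal Real-formal-group isomorphism, with coefficients $\bar b_i$ coming from ${MU_\R}_\bigstar MU_\R$. Everything after that is naturality plus the freeness of $\bH^\bigstar(B_{C_2}\Sigma_2)$, which make the coefficient extraction immediate. As a consistency check, applying $\Res$ sends $d\mapsto t^2$ in $\rH^*(B\Sigma_2)$, hence $\exi_n\mapsto\cxi_n^2$, while $\Res\,\bar b_{2^n-1}=b_{2^n-1}$; this recovers the classical fact that $b_{2^n-1}\in MU_* MU$ maps to $\cxi_n^2$ in the nonequivariant dual Steenrod algebra, and it explains why only the even-type classes $\exi_n$ — and not the odd classes $\etau_n$ — are hit.
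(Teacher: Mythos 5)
Your proof is correct and follows essentially the same route as the paper's: both lift the Euler class $d$ to the universal orientation in ${MU_\R}^\bigstar(B_{C_2}S^\sigma)$ and invoke naturality of the right coaction along the truncation $MU_\R\to\bH$. You go slightly further by writing the $MU_\R$-coaction explicitly as $\psi(x)=\sum x^{i+1}\otimes\bar b_i$ and extracting $q(\bar b_{2^n-1})=\exi_n$ via freeness over $\bMC$, which makes precise the coefficient-matching step that the paper leaves implicit in the phrase ``so do the terms in its coaction.''
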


\begin{proof}
    The classes $\exi_n$ are defined in \cref{d-coaction} via the completed coaction on the class $d\in \bH^\rho(B_{C_2}{\Sigma_2})$, which is by definition restricted from the Euler class $d\in \bH^\rho(B_{C_2}S^\sigma)$.
    Writing $\C\bP^n_\R$ for $\C\bP^n$ equipped with the complex conjugation action of $C_2$,
    the completed coaction map 
    \[
    \hat\psi \colon \bH^\bigstar(B_{C_2}S^\sigma) \rtarr \bH^\bigstar(B_{C_2}S^\sigma) \widehat\otimes_{\bMC} \cAC_\bigstar
\]
is obtained as a limit of the right coactions
    \[
    \psi \colon \bH_\bigstar(D(\C\mathbb{P}^n_\R)) \rtarr \bH_\bigstar(D(\C\mathbb{P}^n_\R)) \otimes_{\bMC} \cAC_\bigstar
\]
under the Spanier-Whitehead duality identifications $\bH_\bigstar({D}(\C\mathbb{P}^n_\R))\cong \bH^\bigstar(\C\mathbb{P}^n_\R)$ and the identification $\C\mathbb{P}^\infty_\R\simeq B_{C_2}S^\sigma$. The latter coactions $\psi$ are obtained by applying homotopy groups and a Kunneth isomorphism to the bottom row of the commutative diagram
\[
\begin{tikzcd}
    D(\C\mathbb{P}^n_\R)\wedge MU_\R\arrow[d]\arrow[r,"1\wedge\eta\wedge1"]&D(\C\mathbb{P}^n_\R)\wedge MU_\R\wedge MU_\R\arrow[d]\\
   D(\C\mathbb{P}^n_\R)\wedge \bH\arrow[r,"1\wedge\eta\wedge1"]&D(\C\mathbb{P}^n_\R)\wedge \bH\wedge \bH
\end{tikzcd}
\]
induced by the Postnikov truncation $MU_\R\to \bH$. By naturality of the Kunneth isomorphisms, this gives a commutative diagram of right coactions
\[
\begin{tikzcd}
{MU_\R}_\bigstar D(\C\mathbb{P}^n_\R)\arrow[d]\arrow[r,"\psi"]&{MU_\R}_\bigstar D(\C\mathbb{P}^n_\R)\otimes_{{MU_\R}_\bigstar}{MU_\R}_\bigstar MU_\R\arrow[d]\\
    \bH_\bigstar D(\C\mathbb{P}^n_\R) \arrow[r,"\psi"]&\bH_\bigstar D(\C\mathbb{P}^n_\R)\otimes_{\bH_\bigstar}\cAC_\bigstar
\end{tikzcd}
\]
Since the Euler class $d$ lifts (to the universal orientation)
along the lefthand vertical map, 
so do the terms in its coaction by commutativity of the diagram, and the result follows.
\end{proof}

We will use the above coaction formulas for $\bH^\bigstar(B_{C_2}{\Sigma_2})$, together with knowledge of the underlying and fixed point homomorphisms on $\bH^\bigstar(B_{C_2}{\Sigma_2})$, to determine the underlying and fixed points homomorphisms on $\cAC_\bigstar$.
Recall from \cite[Section 2]{BW} that for a $C_2$-spectrum $\bX$ there are underlying and (modified) geometric fixed point homomorphisms
\[  
    \Res\colon \bMC \bX \rtarr H_* (\Res \bX)[u^{\pm 1}]
\]
and
\begin{equation}
\label{defn:mFix}
    \mFix\colon \bMC \bX \rtarr H_*(\Fix \bX)[a^{\pm 1}]
\end{equation}
and similarly in cohomology. The map $\mFix$ is called a modified geometric fixed point homomorphism as the the honest geometric fixed point homomorphism would have target $H_*(\mFix \bX)[\usig,\asig^{\pm 1}]$. The modified homomorphism is  obtained by quotienting by $\usig$.

The fixed points of $B_{C_2}\Sigma_2$ are 
\[
     (B_{C_2}\Sigma_2)^{C_2} \iso S\big( \sigS^\infty\big)/\Sigma_2
     \amalg S\big( (\sigC\otimes\sigS)^\infty\big)/\Sigma_2 
     \iso B\Sigma_2 \amalg B\Sigma_2.
\]
{In the following, we will}
 restrict to the summand $S\big( \sigS^\infty\big)/\Sigma_2$. We will write $\iota_1\colon S\big( \sigS^\infty\big)/\Sigma_2 \into  (B_{C_2}\Sigma_2)^{C_2}$ for the inclusion.

\begin{prop}
    \label{C2-to-classical-classifying-space}
    The underlying homomorphism 
    \[
        \Res\colon \bH^\bigstar(B_{C_2}{\Sigma_2}) \rtarr \rH^*(B{\Sigma_2})[\usig^{\pm 1}] \iso \F_2[t,\usig^{\pm 1}]
    \]
    and the modified fixed point homomorphism 
    \[
        \mFix\colon \bH^\bigstar(B_{C_2}{\Sigma_2}) \rtarr \rH^*(B{\Sigma_2})^2[\asig^{\pm 1}] \iso (\F_2[t])^2[\asig^{\pm 1}]
    \]
    are given by
    \[
        \Res(c)=\usig t, \qquad \Res(d)=\usig t^2, \qquad
        \mFix(d)=\asig(t,t).
    \]
\end{prop}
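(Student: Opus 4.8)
The plan is to exploit that both $\Res$ and $\mFix$ are maps of $RO(C_2)$-graded rings out of $\bH^\bigstar(B_{C_2}\Sigma_2)\iso \bMC[c,d]/(c^2=\asig c+\usig d)$, so each is determined by the images of the two generators $c$ and $d$. First I would record what the defining relation of \cref{prop:hom classifying space} becomes under each map. Since $\Res(\asig)=0$ while $\Res(\usig)=\usig$ is a unit, the relation becomes $\Res(c)^2=\usig\,\Res(d)$; since $\mFix(\usig)=0$ while $\mFix(\asig)=\asig$ is a unit, it degenerates to $\mFix(c)^2=\asig\,\mFix(c)$ and imposes \emph{no} constraint on $\mFix(d)$. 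A degree count then pins down the shape of each image: as $c$ has degree $\sigma$ and $d$ has degree $\rho=1+\sigma$, the unit $\usig$ (resp.\ $\asig$) absorbs the $\sigma$-weight, forcing $\Res(c)\in\F_2\{\usig t\}$, $\Res(d)\in\F_2\{\usig t^2\}$, and $\mFix(d)\in \asig\cdot\rH^1$ on each of the two fixed summands. Thus in every case it remains only to decide whether a single $\F_2$-coefficient is $0$ or $1$.

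Second, I would compute $\Res(d)$ from the description of $d$ as the restriction along $B_{C_2}\Sigma_2\to B_{C_2}S^\sigma\simeq\C\bP^\infty_\R$ of the Euler class of the tautological complex line bundle. On underlying spaces this classifying map is the map $\R\bP^\infty\to\C\bP^\infty$ classifying the complexification $\C\otimes_\R\gamma_\R$ of the tautological real line bundle; since $w_2(\C\otimes_\R L)=w_1(L)^2$, the degree-$2$ generator $x\in\rH^2(\C\bP^\infty)$ pulls back to $t^2$, so $\Res(d)=\usig t^2$. Applying $\Res$ to the relation then gives $\Res(c)^2=\usig\,\Res(d)=(\usig t)^2$, and since $\F_2[t,\usig^{\pm1}]$ is reduced this square root is unique, whence $\Res(c)=\usig t$. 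I would remark that although $c$ is only well defined modulo $\asig$ by \cref{cAmbiguous}, this ambiguity dies under $\Res$, so $\Res(c)$ is unambiguous.

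For $\mFix(d)$ the relation is useless, so I would argue geometrically. On $C_2$-fixed points the complex line bundle restricts to its fixed real sub-line-bundle, and the modified geometric fixed point homomorphism sends the complex Euler class to $\asig$ times the $\F_2$-Euler class of that fixed real bundle; concretely, on $B_{C_2}S^\sigma$ one has $\mFix$ of the Euler class equal to $\asig t$, with $t$ the Euler class of the tautological bundle on the fixed set $\R\bP^\infty$. I would then pull this back along the two fixed-point maps $B\Sigma_2\to\R\bP^\infty$ associated to the components $S(\sigS^\infty)/\Sigma_2$ and the twisted summand $S((\sigC\otimes\sigS)^\infty)/\Sigma_2$, each of which classifies a tautological real line bundle and hence induces $t\mapsto t$ on $\rH^1$, yielding $\mFix(d)=\asig(t,t)$. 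I would deliberately omit $\mFix(c)$: the degenerate relation $\mFix(c)^2=\asig\,\mFix(c)$ is satisfied by $\asig\cdot(\beta_1,\beta_2)$ for every choice of $\beta_i\in\F_2$, so it carries no information, consistent with the genuine ambiguity of $c$ in \cref{cAmbiguous}.

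The main obstacle is precisely $\mFix(d)$: because $\usig$ is killed, the ring relation gives no leverage and one must instead analyze directly how modified geometric fixed points act on an Euler class, identifying both the correct power of $\asig$ (coming from the nonfixed part of the representation) and the fixed real Euler class $t$ on each of the two components. By contrast, once $\Res(d)=\usig t^2$ is in hand the $\Res$ computation is routine, since the square-root extraction is forced.
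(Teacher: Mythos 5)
The paper states \cref{C2-to-classical-classifying-space} without proof, so there is no argument of the authors' to compare against; judged on its own, your proof is correct and complete in all essentials. The reduction to two $\F_2$-coefficients by degree counting, the computation $\Res(d)=\usig t^2$ via $w_2(\C\otimes_\R L)=w_1(L)^2$ for the underlying classifying map $\R\bP^\infty\to\C\bP^\infty$, the extraction of $\Res(c)=\usig t$ from the relation $\Res(c)^2=\usig\Res(d)$ using injectivity of squaring in $\F_2[t,\usig^{\pm1}]$, and the observation that the $\asig$-ambiguity of $c$ from \cref{cAmbiguous} is invisible to $\Res$ are all sound. The only step you assert rather than prove is the key input for $\mFix(d)$: that for the tautological Real line bundle on $B_{C_2}S^\sigma\simeq\C\bP^\infty_\R$, whose restriction to the fixed set $\R\bP^\infty$ splits as $\gamma_\R\oplus(\sigma\otimes\gamma_\R)$, the modified geometric fixed point homomorphism carries the Euler class to $\asig\cdot w_1(\gamma_\R)=\asig t$ --- the point being that $e(\sigma\otimes\gamma_\R)\in\bH^\sigma(\R\bP^\infty)$ reduces to $\asig$ once $\usig$ is killed. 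This is standard and correct, but since it is the entire content of the $\mFix(d)$ computation (the ring relation being, as you note, vacuous there), it deserves a sentence of justification. Likewise, the claim that \emph{both} fixed-point components $S(\sigS^\infty)/\Sigma_2$ and $S((\sigC\otimes\sigS)^\infty)/\Sigma_2$ map to $(\C\bP^\infty_\R)^{C_2}\iso\R\bP^\infty$ by a map pulling $t$ back to $t$ is true (a conjugation-fixed complex line is the complexification of a real line, and both $\R v$ and $i\R v$ determine the same such line), but is worth the one-line check, since it is exactly what produces the symmetric answer $\asig(t,t)$ rather than $\asig(t,0)$.
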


\begin{rmk}
    We have not stated the value of $\mFix(c)$ in \cref{C2-to-classical-classifying-space}. The ambiguity in the definition of the class $c$ (\cref{cAmbiguous}) means that the value $\mFix(c)$ is not well defined. Depending on the choice for $c$, the value $\mFix(c)$ is either 0 or $\asig(1,1)$. To record this ambiguity, we will denote $\mFix (c) = \asig (\epsilon, \epsilon),$ where $\epsilon \in \{0, 1\},$ 
    {when it appears in the proof of \cref{GeomFixedA} below.}
\end{rmk}

It follows that the restriction of $\mFix(d)$ to the summand 
$S\big( \sigS^\infty\big)/\Sigma_2$ is $\iota_1^* \mFix(d) = \asig t$.

\begin{prop}
\label{UnderlyingA}
    The underlying homomorphism $\Res\colon \cAC_\bigstar \to \cAcl_*[\usig^{\pm 1}]$ is given by 
    \[
        \Res(\etau_n) = \cxi_{n+1}/\usig^{2^{n}-1}, \qquad 
        \Res({\exi_n}) = \cxi_n^2/\usig^{2^n-1}.
    \]
\end{prop}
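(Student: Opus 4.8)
The plan is to exploit the naturality of the coaction under the underlying (restriction) functor, which produces a commutative square
\[
\begin{tikzcd}
\bH^\bigstar(B_{C_2}{\Sigma_2}) \arrow[r,"\hat\psi"] \arrow[d,"\Res"'] & \bH^\bigstar(B_{C_2}{\Sigma_2})\,\widehat\otimes_{\bMC}\,\cAC_\bigstar \arrow[d,"\Res\otimes\Res"] \\
\rH^*(B{\Sigma_2})[\usig^{\pm1}] \arrow[r,"\hat\psi"] & \rH^*(B{\Sigma_2})[\usig^{\pm1}]\,\widehat\otimes\,\cAcl_*[\usig^{\pm1}].
\end{tikzcd}
\]
The bottom map is the classical coaction of \cref{sec:dualSteenrod}, extended $\usig$-linearly, so that $\hat\psi(t)=\sum_{i\ge0}t^{2^i}\otimes\cxi_i$ with $\cxi_0=1$. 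Since $\Res(c)=\usig t$ and $\Res(d)=\usig t^2$ by \cref{C2-to-classical-classifying-space}, I would compute $(\Res\otimes\Res)\hat\psi$ on $c$ and on $d$ in two ways, going around the square, and compare coefficients of powers of $t$.

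Running $c$ around the square, the equivariant coaction formula \eqref{c-coaction} gives
\[
(\Res\otimes\Res)\hat\psi(c)=\usig t\otimes1+\sum_{n\ge0}\usig^{2^n}t^{2^{n+1}}\otimes\Res(\etau_n),
\]
while applying the classical coaction to $\Res(c)=\usig t$ and using $\usig$-linearity gives
\[
\hat\psi(\usig t)=\usig t\otimes1+\usig\sum_{i\ge1}t^{2^i}\otimes\cxi_i.
\]
Cancelling the common term and matching the coefficient of $t^{2^{n+1}}$ (i.e.\ setting $i=n+1$) yields $\usig^{2^n}\Res(\etau_n)=\usig\,\cxi_{n+1}$, hence $\Res(\etau_n)=\cxi_{n+1}/\usig^{2^n-1}$. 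Running $d$ around the square via \eqref{d-coaction} and using the Frobenius over $\F_2$ to expand $\hat\psi(\usig t^2)=\usig\,\hat\psi(t)^2=\usig\sum_{i\ge0}t^{2^{i+1}}\otimes\cxi_i^2$, the same coefficient comparison gives $\usig^{2^n}\Res(\exi_n)=\usig\,\cxi_n^2$, and therefore $\Res(\exi_n)=\cxi_n^2/\usig^{2^n-1}$.

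The bookkeeping above is routine; the step requiring care is the first paragraph, namely that the square genuinely commutes with the \emph{classical} coaction along the bottom. This rests on two facts: that the restriction functor $\Sp_{C_2}\to\Sp$ is symmetric monoidal and hence carries the equivariant $\cAC_\bigstar$-comodule $\bH^\bigstar(B_{C_2}{\Sigma_2})$ to the underlying $\cAcl_*$-comodule $\rH^*(B{\Sigma_2})$, and that $\usig$ lies in $\bMC$ and so is a coaction scalar, making $\hat\psi$ genuinely $\usig$-linear with $\usig$ surviving to the invertible scalar in the target (this is the Behrens--Wilson homomorphism of \cite[Section 2]{BW}). Once this identification is secured, a degree count confirms the formulas are degree-preserving: $\etau_n$ sits in degree $2^n\rho-\sigma$, restricting to topological degree $2^{n+1}-1=|\cxi_{n+1}|$, and $\exi_n$ sits in degree $(2^n-1)\rho$, restricting to $2^{n+1}-2=|\cxi_n^2|$, with the powers of $\usig$ absorbing the difference between the $RO(C_2)$- and topological gradings.
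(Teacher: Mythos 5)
Your proposal is correct and follows essentially the same route as the paper: both arguments compare $\Res\hat\psi$ with $\hat\psi\Res$ on the classes $c$ and $d$ of $\bH^\bigstar(B_{C_2}\Sigma_2)$, using $\Res(c)=\usig t$ and $\Res(d)=\usig t^2$, and then match coefficients of powers of $t$. The extra justification you give for the commutativity of the naturality square and the closing degree check are fine but not needed beyond what the paper already takes for granted.
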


The powers of $\usig$ in these formulas appear only to make $\Res$ into a map of $RO(C_2)$-graded rings. The reader is encouraged to ignore the denominators.

\begin{pf}
    The elements $\exi_n$ and $\etau_n$ of $\cAC_\bigstar$ are defined by the equations \cref{c-coaction} and \cref{d-coaction}. Comparing $\Res\hat\psi(d)$ to $\hat\psi(\Res d)=\hat\psi(\usig t^2)=\usig (\hat\psi t)^2$ gives
    \[
        \sum_{n\geq 0} \usig^{2^n} t^{2^{n+1}} \otimes \Res(\exi_n) = 
        \sum_{n\geq 0} \usig t^{2^{n+1}} \otimes \cxi_n^2.
    \]
    This gives the formula for $\Res(\exi_n)$.
    Similarly, comparing $\Res\hat\psi(c)$ to $\hat\psi(\Res c)=\hat\psi(\usig t)=\usig\hat\psi(t)$ gives 
    \[
        \usig t\otimes 1 + \sum_{n \geq 0} \usig^{2^n}t^{2^{n+1}}\otimes \Res(\etau_n) = \sum_{n \geq 0} \usig t^{2^n} \otimes \cxi_n.
    \]
\end{pf}

In other words, the first few values of $\Res$ are $\Res(\etau_0) = \cxi_1$, $\Res(\etau_1) = \frac1{\usig} \cxi_2$, $\Res(\etau_2) = \frac1{\usig^3}\cxi_3$, \mbox{$\Res(\exi_1) = \frac1{\usig} \cxi_1^2$}, $\Res(\exi_2) = \frac1{\usig^3} \cxi_2^2$.
\medskip

We next consider the effects of geometric fixed points on the equivariant dual Steenrod algebra. However, the target of \eqref{defn:mFix} in the case $\bX = \bH$ would be $H_*( \Fix \bH)[\asig^{\pm 1}]$. As we will  want a comparison to the classical dual Steenrod algebra, we will further compose with the projection $\Fix \bH \simeq H[v] \to H$. Thus, this might be called a doubly modified geometric fixed point homomorphism.

\begin{prop}
    \label{GeomFixedA}
    The {(doubly modified)} geometric fixed point homomorphism $\mFix\colon \cAC_\bigstar \to \cAcl_*[\asig^{\pm 1}]$ is given by
    \[
        \mFix(\etau_i) = 0, \qquad \mFix(\exi_i) = \cxi_i/\asig^{2^i-1}.
    \]
\end{prop}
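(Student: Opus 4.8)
The plan is to mimic the proof of \cref{UnderlyingA}, replacing the underlying homomorphism $\Res$ with the doubly modified geometric fixed point homomorphism $\mFix$ and feeding in the values of $\mFix$ on the generators $c,d$ of $\bH^\bigstar(B_{C_2}\Sigma_2)$ recorded in \cref{C2-to-classical-classifying-space}. The structural input I would isolate first is that $\mFix$ is natural and (after the projection $\Fix\bH\simeq H[v]\to H$) monoidal, so that applying it to the defining coaction produces a commutative square
\[
\begin{tikzcd}
\bH^\bigstar(B_{C_2}\Sigma_2) \ar[r,"\hat\psi"]\ar[d,"\mFix"] & \bH^\bigstar(B_{C_2}\Sigma_2)\,\widehat\otimes_{\bMC}\,\cAC_\bigstar \ar[d,"\mFix\otimes\mFix"]\\
\rH^*(B\Sigma_2)[\asig^{\pm1}] \ar[r,"\hat\psi"] & \rH^*(B\Sigma_2)[\asig^{\pm1}]\,\widehat\otimes\,\cAcl_*[\asig^{\pm1}]
\end{tikzcd}
\]
where, after restricting to the fixed-point summand $S(\sigS^\infty)/\Sigma_2$ via $\iota_1$, the bottom coaction is the $\asig$-linear extension of the classical coaction $\hat\psi(t)=\sum_{i\ge0}t^{2^i}\otimes\cxi_i$ that defines the $\cxi_i$ in \cref{sec:dualSteenrod}. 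Here I use that $\Fix(\bH\wedge\bH)\simeq\Fix\bH\wedge\Fix\bH$ identifies $\cAcl_*$ as the target of $\mFix$ on $\cAC_\bigstar$, so that $\mFix$ is a map of comodule algebras intertwining the two coactions.

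With this square in hand, the remainder is bookkeeping exactly parallel to \cref{UnderlyingA}. Applying $\mFix\otimes\mFix$ to \cref{d-coaction} and using $\iota_1^*\mFix(d)=\asig t$, so that $\mFix(d^{2^n})=(\asig t)^{2^n}=\asig^{2^n}t^{2^n}$, gives
\[
\sum_{n\ge0}\asig^{2^n}t^{2^n}\otimes\mFix(\exi_n)=\hat\psi(\asig t)=\asig\sum_{i\ge0}t^{2^i}\otimes\cxi_i .
\]
Matching the coefficient of $t^{2^n}$ and cancelling the invertible $\asig$ yields $\mFix(\exi_n)=\asig^{1-2^n}\cxi_n=\cxi_n/\asig^{2^n-1}$, as claimed (the $n=0$ case recording $\exi_0=1$). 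Similarly, applying $\mFix\otimes\mFix$ to \cref{c-coaction} and using $\iota_1^*\mFix(c)=\asig\epsilon$ with $\epsilon\in\{0,1\}$ a constant gives
\[
\asig\epsilon\otimes1+\sum_{n\ge0}\asig^{2^n}t^{2^n}\otimes\mFix(\etau_n)=\hat\psi(\asig\epsilon)=\asig\epsilon\otimes1 ,
\]
since a scalar is primitive for the coaction. Hence $\sum_n\asig^{2^n}t^{2^n}\otimes\mFix(\etau_n)=0$, forcing $\mFix(\etau_n)=0$ for all $n$; note that the ambiguity $\epsilon$ cancels and plays no role, consistent with \cref{cAmbiguous}.

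The main obstacle I anticipate is not the algebra but the justification of the naturality square: one must verify that the doubly modified homomorphism on $\cAC_\bigstar$ genuinely intertwines the equivariant coaction with the classical one on $\rH^*(B\Sigma_2)$ after all three modifications (inverting $\asig$, quotienting by $\usig$, and projecting $H[v]\to H$), and that restricting to the correct summand $S(\sigS^\infty)/\Sigma_2$ of $(B_{C_2}\Sigma_2)^{C_2}$ identifies the relevant coaction with the defining coaction of the $\cxi_i$. Once the monoidality of geometric fixed points and the computation of $(B_{C_2}\Sigma_2)^{C_2}$ recorded before \cref{C2-to-classical-classifying-space} are invoked, this compatibility is formal, and the two displayed identities above complete the proof.
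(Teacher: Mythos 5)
Your proposal is correct and follows essentially the same route as the paper's own proof: apply $\mFix$ to the defining coactions \cref{c-coaction} and \cref{d-coaction}, restrict along $\iota_1$ to the summand $S(\sigS^\infty)/\Sigma_2$, compare with $\hat\psi(\asig t)$ and $\hat\psi(\asig\epsilon)$, and match coefficients of $t^{2^n}$, noting that the ambiguity $\epsilon$ cancels. The naturality of the coaction under $\mFix$ that you flag as the main obstacle is taken as given in the paper as well, so there is nothing further to add.
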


\begin{proof}
    The elements $\exi_n$ and $\etau_n$ of $\cAC_\bigstar$ are defined by the equations \cref{c-coaction} and \cref{d-coaction}. 
    Recall that $\iota_1$ denotes the inclusion $B\Sigma_2 \iso S\big( \sigS^\infty\big)/\Sigma_2 \into  (B_{C_2}\Sigma_2)^{C_2}$.
    Comparing $\iota_1^* \mFix \hat \psi (d)$ to $\hat \psi (\iota_1^*\mFix d) = \hat \psi (\asig t) = \asig \hat \psi (t)$ gives
    \[
    \sum_{n \geq 0} \asig^{2^n}  t^{2^n} \otimes \mFix (\exi_n) = \sum_{n \geq 0} \asig t^{2^n} \otimes \cxi_n.
    \]
    This gives the formula for $\mFix (\exi_n).$
    
    Similarly, comparing $\iota_1^*\mFix (\hat \psi (c))$ to $\hat \psi (\iota_1^*\mFix c) = \hat \psi (\asig  \epsilon) = \asig \hat \psi  \epsilon$ gives
    \[
    \asig  \epsilon \otimes 1 + \sum_{n \geq 0} \asig^{2^n}  t^{2^n} \otimes \mFix (\etau_n) = \asig \epsilon \otimes 1.
    \] 
    This gives the formula for $\mFix (\etau_n)$. Note that the value of $\mFix (\etau_i)$ does not depend on the choice of $c$ giving rise to the value of $\epsilon.$
\end{proof}

\section{Equivariant Dyer-Lashof operations}

Dyer-Lashof operations acting on the homology of $C_2$-equivariant $E_\infty$-spaces were introduced in \cite{Wilson17,Wilson19}. For any $C_2$-representation $V$, there is a corresponding little $V$-disks operad $E_V$. Dyer-Lashof operations in the homology of $E_\rho$-spaces were considered in \cite{BW}, where $\rho$ is the regular representation of $C_2$. In particular, Behrens-Wilson define operations
\[
    \bH_{k\rho+1}X \xrtarr{Q^{k\rho+\sigma}} \bH_{(2k+1)\rho}X
    \qquad \text{and}
    \qquad
    \bH_{k\rho+1}X \xrtarr{Q^{(k+1)\rho}} \bH_{(2+1)k\rho+1}X.
\]
We will denote these same operations using subscript notation,
as
\begin{equation}
\label{eq:Q0Q1krho+1}    
    \bH_{k\rho+1}X \xrtarr{Q_0} \bH_{(2k+1)\rho}X
    \qquad \text{and}
    \qquad
    \bH_{k\rho+1}X \xrtarr{Q_1} \bH_{(2+1)k\rho+1}X.
\end{equation}
The same approach defines Dyer-Lashof operations on the homology of any $E_\rho$-algebra $\bX$ in $C_2$-spectra. Behrens-Wilson focus on the case of $\bX = \Sigma^\infty_{C_2} X_+$ for $X$ an $E_\rho$-algebra in $C_2$-spaces.

The method of \cite{BW} can also be used to define operations
\[
    \bH_{k\rho}\bX \xrtarr{Q^{k\rho}} \bH_{2k\rho}\bX
    \qquad \text{and}
    \qquad
    \bH_{k\rho}\bX \xrtarr{Q^{k\rho+\sigma}} \bH_{2k\rho+\sigma}\bX,
\]
which we will rewrite in subscript notation as 
\begin{equation}
\label{eq:Q0Q1krho}    
    \bH_{k\rho}\bX \xrtarr{Q_0} \bH_{2k\rho}\bX
    \qquad \text{and}
    \qquad
    \bH_{k\rho}\bX \xrtarr{Q_1} \bH_{2k\rho+\sigma}\bX.
\end{equation}
To define these, represent an element $x \in \bH_{k\rho} \bX$ as a map $S^{k\rho} \to \bH \wedge \bX$. 
We may then form the composition
\begin{equation}    
\label{eq:DefineQkrho}
    \tilde{x}\colon \tMC \Ptwo_\rho(S^{k\rho}) \xrightarrow{\Ptwo_\rho(x)} \bMC\Ptwo_\rho (\bH\wedge \bX) \to \bMC(\bH\wedge \bX) \to \bMC \bX,
\end{equation}
where $\Ptwo_\rho(S^{k\rho})$ is the extended power $\mathcal{C}_\rho (2)_+ \wedge_{\Sigma_2} S^{k \rhoC \otimes \rhoS}$ for $C_\rho$ an $E_\rho$-operad, the second map is induced by the $E_\rho$-structure of $\bH\wedge \bX$, and the third map is the multiplication $\bH \wedge \bH \to \bH$.

\begin{prop}
\label{prop:CoactrhoSymP}
The {reduced} homology of $\Ptwo_\rho(S^{k\rho})$ is given by
\[
    \tMC \Ptwo_\rho(S^{k\rho}) \cong \bMC \{f_{2k\rho}, \, f_{2k \rho + \sigma} \},
\]
with trivial right $\cAC_\bigstar$-coaction. In other words,
\[
    \psi_R(f_{2k\rho}) = f_{2k\rho} \otimes 1,
    \qquad 
    \psi_R(f_{2k\rho+\sigma}) =f_{2k\rho+\sigma}\otimes 1.
\]
\end{prop}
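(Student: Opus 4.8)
The plan is to identify $\Ptwo_\rho(S^{k\rho})$ with a two–cell stunted equivariant projective space sitting inside $B_{C_2}\Sigma_2$, and then to read off both the module structure and the coaction from \cref{prop:hom classifying space} together with the coaction formulas \eqref{c-coaction} and \eqref{d-coaction}. First I would unravel the extended power. The configuration space $\mathcal{C}_\rho(2)$ of two points in $\rho=\rhoC$ deformation retracts, via the difference vector, onto the unit sphere of the ambient representation; since $\Sigma_2$ swaps the two points (acting by $-1$) while $C_2$ acts through $\rho$, this ambient representation is $\rhoC\otimes\sigS$, so $\mathcal{C}_\rho(2)\simeq S(\rhoC\otimes\sigS)$ as a $C_2\times\Sigma_2$-space. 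Writing $\rhoC\otimes\rhoS=\rhoC\oplus(\rhoC\otimes\sigS)$ and splitting off the $\Sigma_2$-trivial summand $S^{k\rhoC}$, I would identify
\[
    \Ptwo_\rho(S^{k\rho}) \simeq S^{k\rhoC}\wedge\bigl(S(\rhoC\otimes\sigS)_+\wedge_{\Sigma_2}S^{k\rhoC\otimes\sigS}\bigr),
\]
the smash of $S^{k\rhoC}$ with the Thom spectrum of $k$ copies of the tautological $(\rhoC\otimes\sigS)$-bundle over $S(\rhoC\otimes\sigS)/\Sigma_2\subset B_{C_2}\Sigma_2$. This base is the bottom piece of $B_{C_2}\Sigma_2$, and by \cref{prop:hom classifying space} the Euler class $d$ is the Thom class of the tautological bundle; combining the $k$-fold Thom class $d^k$ with the suspension $S^{k\rhoC}$ gives a total shift of $2k\rho$, so the result is the subquotient of $B_{C_2}\Sigma_2$ on the two cells dual to $d^{2k}$ (degree $2k\rho$) and $cd^{2k}$ (degree $2k\rho+\sigma$).

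Granting this identification, the module statement is immediate. By \cref{prop:hom classifying space} the cohomology $\bH^\bigstar B_{C_2}\Sigma_2\iso\bMC[c,d]/(c^2=\asig c+\usig d)$ is free over $\bMC$ on the basis $\{d^j,cd^j\}$, so the two-cell subquotient has free cohomology $\bMC\{d^{2k},cd^{2k}\}$, and hence free homology $\bMC\{f_{2k\rho},f_{2k\rho+\sigma}\}$ with $f_{2k\rho},f_{2k\rho+\sigma}$ dual to $d^{2k},cd^{2k}$. Freeness sidesteps any universal-coefficient obstruction between the two cells.

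For the coaction I would dualize the cohomology coaction, the algebra map determined by \eqref{c-coaction} and \eqref{d-coaction}, where $\exi_0=1$ (it is the degree-zero class with counit $1$, forced by the leading term $d\otimes 1$ of $\hat\psi(d)$). Since $\hat\psi(d)=\sum_{n\ge0}d^{2^n}\otimes\exi_n$ involves only $d$, the power $\hat\psi(d^{2k})=\hat\psi(d)^{2k}$ is a sum of $c$-free monomials $d^m\otimes(\cdots)$ with $m\ge 2k$; in particular it has no $cd^{2k}$-term. Likewise $\hat\psi(cd^{2k})=\hat\psi(c)\,\hat\psi(d)^{2k}$ has leading term $cd^{2k}\otimes 1$, and every other term either carries a factor $c$ with $d$-power $>2k$ (from $c\otimes 1$ in $\hat\psi(c)$) or, coming from the $c$-free part $\sum_{n\ge0}d^{2^n}\otimes\etau_n$ of $\hat\psi(c)$, is $c$-free with $d$-power $\ge 2k+1$; in particular it has no $d^{2k}$-term. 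The homology coaction is the transpose of the cohomology coaction (up to antipode) in the dual bases, and since an antipode cannot turn a zero coefficient nonzero, these two vanishings say exactly that $\psi_R(f_{2k\rho+\sigma})$ has no $f_{2k\rho}$-term and $\psi_R(f_{2k\rho})$ has no $f_{2k\rho+\sigma}$-term, while both diagonal coefficients equal $1$. Hence $\psi_R(f_{2k\rho})=f_{2k\rho}\otimes 1$ and $\psi_R(f_{2k\rho+\sigma})=f_{2k\rho+\sigma}\otimes 1$.

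The main obstacle is the first step: making the identification with the subquotient of $B_{C_2}\Sigma_2$ precise and verifying that the comodule structure on the Thom spectrum is the one induced from \eqref{c-coaction}--\eqref{d-coaction}, i.e.\ that the Thom isomorphism is compatible with the coaction (this is what injects the Euler class $d$ into the computation). Once that is in place the coaction computation is clean, and --- in contrast with the classical stunted projective space, where the top cell's coaction is governed by a parity --- no binomial argument is needed here, since the two cells are adjacent, differing only by the factor $c$. As a consistency check, applying $\Res$ recovers $\tilde{\rH}_*(\Ptwo_2 S^{2k})=\F_2\{e_{4k},e_{4k+1}\}$ with both classes primitive, which is automatic because $2k$ is even.
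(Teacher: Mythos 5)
Your argument is correct, but it takes a genuinely different route from the paper's, which handles the coaction by pure degree bookkeeping: a non-diagonal term in $\psi_R(f_{2k\rho+\sigma})$ would have to lie in $\cAC_\bigstar$ in degree $\sigma$, which vanishes, and the only possible non-diagonal term $\varepsilon\, f_{2k\rho+\sigma}\otimes\asig$ in $\psi_R(f_{2k\rho})$ is killed by the counit axiom; the module structure is simply quoted from Wilson's identification of $\Ptwo_\rho(S^{k\rho})$ with a filtration stage of $\mathbb{P}_2 S^{k\rho}$. You instead realize $\Ptwo_\rho(S^{k\rho})$ as a $k\rho$-fold suspension of a two-cell stunted projective space and read the coaction off \eqref{c-coaction} and \eqref{d-coaction}. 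This works: the identification $\mathcal{C}_\rho(2)\simeq S(\rhoC\otimes\sigS)$ and the splitting $k\rhoC\otimes\rhoS\cong k\rhoC\oplus(k\rhoC\otimes\sigS)$ are exactly the ones the paper uses for $\Ptwo_\rho(S^{k\rho-\sigma})$, and since every off-diagonal term of $\hat\psi(d^k)$ and $\hat\psi(cd^k)$ carries a strictly higher power of $d$, only the diagonal survives in the two-cell quotient $\bMC\{d^k,cd^k\}$. (One small imprecision: the suspended Thom space need not literally be the subquotient of $B_{C_2}\Sigma_2$ on the cells $d^{2k}$ and $cd^{2k}$ --- that would require an equivariant trivialization of $k$ copies of the tautological bundle over the base --- but this is harmless, since the computation goes through verbatim with $d^k$ and $cd^k$ followed by a suspension, which does not change the coaction.) What your approach buys is uniformity: it is the same mechanism the paper is forced to deploy for the companion case $\Ptwo_\rho(S^{k\rho-\sigma})$, where the top cell sits in a degree in which $\cAC_\bigstar$ is nonzero and degree arguments leave the coefficients of $\etau_0$ and $\asig\exi_1$ undetermined. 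What it costs is the extra work of justifying the Thom-space identification and its compatibility with the coaction --- precisely the step you flag as the main obstacle --- all of which the paper's two-line counit argument sidesteps.
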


\begin{pf}
    The homology statement follows as in \cite[Theorem~2.15]{Wilson17}, given that $\Ptwo_\rho(S^{k\rho})$ is the stage $F_1 \mathbb{P}_2 S^{k\rho}$ of the filtration considered in \cite{Wilson17}.

    The coaction on $f_{2k\rho+\sigma}$ is necessarily trivial for degree reasons, as $\cAC_\bigstar$ vanishes in degree $\sigma$. The coaction on $f_{2k\rho}$ is necessarily of the form
    \[
        \psi_R(f_{2k\rho}) = f_{2k\rho} \otimes 1 + \varepsilon f_{2k\rho+\sigma} \otimes \asig  = (f_{2k\rho} + \varepsilon \asig f_{2k\rho+\sigma}) \otimes 1
    \]
    for $\varepsilon\in \{0,1\}$.
 But the counit axiom for the $\cAC_\bigstar$-coaction forces $\varepsilon$ to be 0.
\end{pf}

We can now use \eqref{eq:DefineQkrho} to define Dyer-Lashof operations on classes in degree $k\rho$.

\begin{defn}
\label{defn:DLErho}
    Let $\bX$ be an $E_\rho$-algebra in $C_2$-spectra.
    Given a class $x\in \bH_{k\rho}\bX$, we define the elements 
    \[
        Q_0(x) = Q^{k\rho}(x) \in \bH_{2k\rho}\bX
        \qquad \text{and} \qquad
        Q_1(x) = Q^{k\rho+\sigma}(x) \in \bH_{2k\rho+\sigma}\bX
    \]
    as 
    \[
        Q_0(x) = Q^{k\rho}(x) = \tilde{x}_*(f_{2k\rho}) \qquad \text{and} \qquad Q_1(x)= Q^{k\rho+\sigma}(x) = \tilde{x}_*(f_{2k\rho+\sigma}),
    \]
    where $\tilde{x}$ is the composition defined in \eqref{eq:DefineQkrho}.
\end{defn}

\subsection{Comparison to Wilson's stable operations}
Now if $\bX$ is an $E_\infty$-algebra, there are two definitions of operations $Q_0$ and $Q_1$ in the homology of $X$: those defined above and the stable operations of \cite{Wilson17}. We show that these coincide.

\begin{prop}
\label{StableOpsComp}
    The operations $Q_0$ and $Q_1$ on the homology of $E_\rho$-algebras
    agree with the corresponding stable operations defined in \cite{Wilson17,Wilson19}.
\end{prop}

\begin{proof}
    Recall that we have only defined $Q_0$ and $Q_1$ on classes in degrees $k\rho$ or $k\rho+1$.

    For classes in degree $k\rho$, this relies on an analysis of the map on homology induced by $\Ptwo_\rho(S^{k\rho}) \to \Ptwo_\infty(S^{k\rho})$. According to \cite[Theorem~2.15]{Wilson17}, this is the inclusion
    \[
        \bH_\bigstar \{ e_{2k\rho},e_{2k\rho+\sigma}\} \into \bH_\bigstar\{e_{2k\rho},e_{2k\rho+\sigma}, e_{(2k+1)\rho},e_{(2k+1)\rho+\sigma}, \dots \}.
    \]
    It follows that the stable $Q_0$ and $Q_1$ of \cite{Wilson17} on classes in degree $k\rho$ agree with those of \cref{defn:DLErho}.

    Similarly, for classes of degree $k\rho+1$, we consider $\Ptwo_\rho(S^{k\rho+1}) \to \Ptwo_\infty(S^{k\rho+1})$. \cite[Proposition~3.3]{BW} gives a computation
    \[
        \tMC \Ptwo_\rho S^{k\rho+1} \iso \bMC \{ e_{(2k+1)\rho},e_{(2k+1)\rho+1}\},
    \]
    while \cite[Proposition~2.4.1]{Wilson19} states
    \[
        \tMC \Ptwo_\infty S^{k\rho+1} \iso \bMC \{ e_{(2k+1)\rho},e_{(2k+1)\rho+1},
        e_{(2k+2)\rho},e_{(2k+2)\rho+1},
        \dots
        \}.
    \]
    The argument for this is similar to the proof of \cite[Proposition~3.3]{BW}. Namely, the analogue of \cite[(3.5)]{BW} is a cofiber sequence
    \begin{equation}
        \label{eq:cofibPtwoinfSkrho}
        \Sigma^1 \Ptwo_\infty(S^{k\rho}) \to \Ptwo_\infty(S^{k\rho+1}) \to S(\infty\rho)_+ \wedge S^{2k\rho+2} \simeq S^{2k\rho+2}.
    \end{equation}
    The connecting homomorphism in homology is the map
    \[
        \bH_\bigstar \{ f_{2k\rho+2} \} \to \bH_\bigstar\{\Sigma^2 e_{2k\rho},\Sigma^2 e_{2k\rho+\sigma}, \Sigma^2 e_{(2k+1)\rho},\Sigma^2 e_{(2k+1)\rho+\sigma}, \dots \}
    \]
    given by
    \[
        f_{2k\rho+2} \mapsto \Sigma^2 e_{2k\rho}.
    \]
    This follows from the fact that the underlying space of $\Ptwo_\infty(S^{k\rho+1})$, which is $\Sigma^{2k+1} \R\bP_{2k+1}^\infty$, has no homology in degree $4k+1$.

    Then the inclusion $S(\rhoC\otimes \sigS) \into S(\infty \rhoC\otimes \sigS)$ induces a map of cofiber sequences from \cite[(3.5)]{BW} to \eqref{eq:cofibPtwoinfSkrho} sending the generators in $\tMC \Ptwo_\rho S^{k\rho+1}$ to the generators of the same name in $\tMC\Ptwo_\infty S^{k\rho+1}$. It follows that the stable $Q_0$ and $Q_1$ of \cite{Wilson17} on classes in degree $k\rho+1$ agree with those of \cite{BW} for $E_\rho$-algebras.
\end{proof}

\begin{rmk}    
    We note that, just as is the case non-equivariantly, Dyer-Lashof operations on equivariant {\it finite} loop spaces are in general not additive. In contrast, the stable operations for $E_\infty$-algebras are additive.
\end{rmk}

\subsection{The Cartan formula}

As we will see in
\cref{hlgyOmRhoSRho1}, 
the example of interest in this article will be a homology algebra that can be identified, as an $E_\rho$-algebra, with the underlying $E_\rho$-algebra of an $E_\infty$-algebra. This means that the equivariant Dyer-Lashof operations, which in general are not even additive, will be particularly well-behaved. For instance, they will inherit the $E_\infty$ form of the Cartan formula established in \cite{Wilson19}.

We will express the Cartan formula of \cite[Corollary 1.3.2]{Wilson19} in the subscript notation. For this, it is convenient to say that $x \equiv \varepsilon \pmod{\rho}$, for $\varepsilon\in\{0,1\}$, if the degree of $x$ is equal to $\varepsilon$ plus a multiple of $\rho$. Then the Cartan formula reads as follows.

\begin{thm}[{\cite[Corollary~1.3.2]{Wilson19}}]  
Let $\bX$ be an $E_\rho$-algebra underlying a $C_2-E_\infty$-algebra and let $x,y\in \bMC \bX$ lie in degrees congruent to either 0 or 1 modulo $\rho$. Then there are Cartan formulas
\[
Q_0(x\otimes y) = 
\begin{cases}
    Q_0(x)\otimes Q_0(y) & x,y\equiv0\pmod{\rho} \\
    Q_0(x)\otimes Q_0(y) + \asig Q_1(x) \otimes Q_0(y) & x\equiv0, y\equiv1\pmod{\rho} \\
    Q_0(x)\otimes Q_0(y) + \asig Q_0(x) \otimes Q_1(y) & x\equiv1, y\equiv0\pmod{\rho} \\
\end{cases}
\]
and
\[
Q_1(x\otimes y) = 
\begin{cases}
    Q_1(x) \otimes Q_0(y) + Q_0(x) \otimes Q_1(y)  & x,y\equiv0\pmod{\rho} \\
    \qquad + \asig Q_1(x) \otimes Q_1(y) & \\
    Q_0(x) \otimes Q_1 (y) + \usig Q_1(x)\otimes Q_0(y) & x\equiv0, y\equiv 1 \pmod{\rho} \\
    Q_1(x) \otimes Q_0 (y) + \usig Q_0(x)\otimes Q_1(y) & x\equiv1, y\equiv 0 \pmod{\rho}. \\\end{cases}
\]
\end{thm}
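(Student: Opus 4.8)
The plan is to deduce this directly from Wilson's Cartan formula \cite[Corollary~1.3.2]{Wilson19} for $C_2$-equivariant $E_\infty$-algebras, translating his representation-indexed (``superscript'') operations into the subscript operations $Q_0,Q_1$ used here. The only conceptual input beyond Wilson's theorem is \cref{StableOpsComp}: because $\bX$ underlies a $C_2$-$E_\infty$-algebra, the operations $Q_0$ and $Q_1$ of \cref{defn:DLErho} coincide with Wilson's stable operations, so his formula applies verbatim. In particular, since $\bX$ is $E_\infty$ all Browder brackets vanish, which is exactly what leaves the clean expressions asserted (with no bracket-correction terms). The remaining work is purely bookkeeping.

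First I would record the superscript-to-subscript dictionary, emphasizing that it depends on the residue of the input degree modulo $\rho$: on a class of degree $k\rho$ one has $Q_0 = Q^{k\rho}$ and $Q_1 = Q^{k\rho+\sigma}$, as in \eqref{eq:Q0Q1krho}, whereas on a class of degree $k\rho+1$ one has $Q_0 = Q^{k\rho+\sigma}$ and $Q_1 = Q^{(k+1)\rho}$, as in \eqref{eq:Q0Q1krho+1}. This dictionary must be applied not only to $x$ and $y$ but also to the output class $x\otimes y$, whose residue modulo $\rho$ is the sum of the residues of $x$ and $y$; in the three cases of the statement this residue is $0$, $1$, and $1$ respectively. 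The absence of a fourth case ($x,y\equiv 1$) reflects that then $x\otimes y\equiv 2\pmod{\rho}$, which is outside the range where $Q_0$ and $Q_1$ have been defined.

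Next I would specialize Wilson's Cartan formula case by case. For each case I would write out his expression for $Q^V(x\otimes y)$ at the two output representations $V$ that correspond under the dictionary to $Q_0$ and $Q_1$ on a class of the appropriate residue, then rewrite each term $Q^{V_1}(x)\otimes Q^{V_2}(y)$ occurring in his formula as $Q_0$ or $Q_1$ of $x$ and $y$, and collect. The coefficients $\asig$ and $\usig$ should emerge as the Euler and orientation classes recording an asymmetric distribution of a $\sigma$-summand between the two tensor factors. Concretely, in the mixed case $x\equiv 0,\ y\equiv 1 \pmod{\rho}$, the term $Q_1(x)\otimes Q_0(y)$ sits in degree exceeding that of $Q_1(x\otimes y)$ by exactly $\sigma - 1$, so multiplication by $\usig\in\bH_{1-\sigma}$ is forced; likewise in the $Q_0$-formula the analogous term lands $\sigma$ too high, and multiplication by $\asig$ corrects it.

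The main obstacle I anticipate is combinatorial rather than conceptual: keeping the $\sigma$-degrees consistent so that $\asig$ and $\usig$ attach to precisely the right monomials, given that the residue-dependent dictionary is applied simultaneously to $x$, $y$, and $x\otimes y$. I would therefore verify every asserted identity by an explicit $RO(C_2)$-degree check on both sides before accepting the translation --- for instance confirming, in the computation just described, that $\usig\cdot\bigl(Q_1(x)\otimes Q_0(y)\bigr)$ and $Q_1(x\otimes y)$ share the degree $(2a+2b+1)\rho + 1$ when $\deg x = a\rho$ and $\deg y = b\rho + 1$.
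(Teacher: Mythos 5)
Your proposal is correct and matches the paper's (implicit) treatment: the paper gives no independent proof, but simply cites Wilson's Corollary~1.3.2 and restates it in subscript notation, with \cref{StableOpsComp} supplying the identification of the $E_\rho$-operations with Wilson's stable operations that makes the citation legitimate. Your degree bookkeeping (e.g.\ that $\usig\cdot(Q_1(x)\otimes Q_0(y))$ and $Q_1(x\otimes y)$ both live in degree $(2a+2b+1)\rho+1$) checks out, so nothing further is needed.
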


\section{The equivariant Nishida relations}
\label{sec:Nishida}

\newcommand{\SymPowerGenDeg}{\delta}

Here we will establish the $C_2$-equivariant analogue of Baker's formulation \cref{eq:classicalcoNishida} of the co-Nishida relations, which describe the $\cAC_\bigstar$-coaction on the output of Dyer-Lashof operations.
Wilson states similar formulas in the stable case in \cite[Section~5.1]{Wilson17}.

For any $E_\rho$-algebra $\mathbf{Y}$ in $C_2$-spectra, let us write $\Ptwo_\rho \mathbf{Y} \xrightarrow{\alpha} \mathbf{Y}$ for the resulting multiplication.
Recall that for $V$ a $C_2$-representation, $\bX$ an $E_\rho$-algebra,  $e\in \bH_\SymPowerGenDeg \Ptwo_\rho(S^V)$,
and $x\in \bH_V \bX$, we define a class 
\[
	\Theta^e (x) := \tilde{x}_*(e) \in \bH_\SymPowerGenDeg \bX
\]
as the image of $e$ in the composite
\[
	\begin{tikzcd}[row sep={1ex}]
		\bH_\SymPowerGenDeg \Ptwo_\rho S^V \ar[r, "\Ptwo_\rho(x)"]  \ar[rrr, bend left=4ex, "\tilde{x}_*",start anchor={north}] & 
        \bH_\SymPowerGenDeg \Ptwo_\rho \bH \wedge \bX  \ar[r, "\alpha_{\bH\wedge X_+}"] & \bH_\SymPowerGenDeg \bH \wedge \bX \ar[r, "\mu"] & \bH_\SymPowerGenDeg \bX \\
		e \ar[rrr, |->] & & & \Theta^e(x)=\tilde{x}_*(e).
	\end{tikzcd}
\]
where the last map is induced from the multiplication $\mu\colon \bH \wedge \bH \rtarr \bH$.

We will verify the equivariant version of \cite[(2.6)]{Baker15}, namely:

\begin{thm}[Co-Nishida relation] 
\label{thm:equivarRightCoactionFormula}
Let $\bX$ be an $E_\rho$-algebra in $C_2$-spectra,
let $x\in \bH_V \bX$, and let $e\in \bH_\SymPowerGenDeg \Ptwo_\rho(S^V)$. If $\psi_R(e)=\sum_i e_i \otimes a_i$, then
\[
	\psi_R \Theta^e(x) = \sum_i \Theta^{e_i} \psi_R(x) \cdot (1\otimes a_i).
\]
\end{thm}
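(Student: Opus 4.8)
The plan is to realize both sides of the identity as the effect on $RO(C_2)$-graded homotopy of a single commutative diagram of $C_2$-spectra, obtained by tracking the coaction $\psi_R$ through the three maps defining $\Theta^e$ in \eqref{eq:DefineQkrho}. The starting point is a spectrum-level description of the coaction: for any $C_2$-spectrum $\bW$ the map $1\wedge\eta\colon \bH\wedge\bW\to\bH\wedge\bW\wedge\bH$, smashing in the unit $\eta\colon S^0\to\bH$ on the right, induces $\psi_R$ after the Künneth isomorphism $\bMC(\bW\wedge\bH)\cong \bMC\bW\otimes_\bMC\cAC_\bigstar$, which is available since $\cAC_\bigstar$ is flat over $\bMC$. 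The crucial observation is that, for $\bX$ an $E_\rho$-algebra, the map $\phi=1_\bX\wedge\eta\colon\bX\to\bX\wedge\bH$ is a map of $E_\rho$-algebras, since $\eta$ is a map of $C_2$-$E_\infty$-rings, and that $\phi_*=\psi_R$.

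Next I would prove the naturality of $\Theta^e$ under maps of $E_\rho$-algebras: for $f\colon\bX\to\bY$ such a map and $x\in\bH_V\bX$, one has $f_*\Theta^e(x)=\Theta^e(f_*x)$. This is a direct chase through \eqref{eq:DefineQkrho}: by functoriality of $\Ptwo_\rho$ and since $1\wedge f$ is again an $E_\rho$-map, it commutes with the structure map $\alpha$, and it commutes with the multiplication $\mu$, which touches only the $\bH$-smash-factors. Applying this naturality to $f=\phi$ yields the clean intermediate identity
\[
\psi_R\Theta^e(x)=\Theta^e_{\bX\wedge\bH}(\psi_R x),
\]
where the right-hand side is the operation $\Theta^e$ for the $E_\rho$-algebra $\bX\wedge\bH$ evaluated on the class $\psi_R(x)\in\bMC(\bX\wedge\bH)$.

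The main work is then to expand $\Theta^e_{\bX\wedge\bH}(\psi_R x)$ using that $\bX\wedge\bH$ is a smash product of $E_\rho$-algebras. The extended-power structure map of $\bX\wedge\bH$ is compatible with the operadic diagonal $\Ptwo_\rho(\bX\wedge\bH)\to\Ptwo_\rho\bX\wedge\Ptwo_\rho\bH$, so the single operation $\Theta^e$ splits across the two factors according to the comultiplicative structure of $\bMC\Ptwo_\rho(S^V)$. The contribution of the $\bH$-factor is exactly the $\cAC_\bigstar$-coaction $\psi_R(e)=\sum_i e_i\otimes a_i$ (computed in \cref{prop:CoactrhoSymP} for $V=k\rho$ and analogously for $V=k\rho+1$): the part of the splitting landing in $\bMC\bH=\cAC_\bigstar$ converts the index $e$ into the various $e_i$ and deposits the coefficients $a_i\in\cAC_\bigstar$. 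Reading the result off against the Künneth splitting $\bMC(\bX\wedge\bH)\cong\bMC\bX\otimes_\bMC\cAC_\bigstar$ produces $\sum_i\Theta^{e_i}\psi_R(x)\cdot(1\otimes a_i)$, where each $\Theta^{e_i}\psi_R(x)$ is computed via the Cartan formula and the Dyer--Lashof operations carried by $\cAC_\bigstar=\bMC\bH$.

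I expect this last step to be the main obstacle. Because $\Ptwo_\rho$ is quadratic rather than additive, the argument cannot be purely formal: one must keep the homology copy of $\bH$ and the coaction copy of $\bH$ separate while passing through $\Ptwo_\rho$ and the structure map, and the precise point is to identify the $\bH$-factor of the external Cartan decomposition for $\bX\wedge\bH$ with the coaction on $\bMC\Ptwo_\rho(S^V)$. The residual work is equivariant bookkeeping that parallels Baker's classical derivation of \eqref{eq:classicalcoNishida}: flatness of $\cAC_\bigstar$ over $\bMC$ for the Künneth isomorphisms, the $C_2$-$E_\infty$-structure on $\bH$ supplying the Dyer--Lashof operations on the coefficient factor $\cAC_\bigstar$, and compatibility with the $RO(C_2)$-grading throughout.
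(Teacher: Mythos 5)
Your first two steps are correct and in fact coincide with what the paper's diagram encodes: the coaction is induced by smashing with the unit on the right, $1\wedge\eta$ is a map of $E_\rho$-algebras, and naturality of $\Theta^e$ gives $\psi_R\Theta^e(x)=\Theta^e_{\bX\wedge\bH}(\psi_R x)$. The problem is the final step. The operadic diagonal $\Ptwo_\rho(\bX\wedge\bH)\to\Ptwo_\rho\bX\wedge\Ptwo_\rho\bH$ does let you evaluate $\Theta^e$ on the external product class $x\boxtimes 1$, but the contribution of the $\bH$-factor is then $\Theta^{e''}(1)$ for $e''$ ranging over the image of $e$ under the \emph{external} coproduct $\tilde{\bH}_\bigstar\Ptwo_\rho(S^V)\to\tilde{\bH}_\bigstar\Ptwo_\rho(S^V)\otimes\tilde{\bH}_\bigstar\Ptwo_\rho(S^0)$, applied to the unit class $1\in\bH_0\bH$. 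Since $\eta\colon S\to\bH$ is a map of $E_\infty$-rings, naturality forces $\Theta^{e''}(1)$ to lie in the image of the right unit $\eta_R\colon\bMC\to\cAC_\bigstar$; in particular it lies in $\bMC_{|e''|}$, which vanishes in degree $1$. So this mechanism can never produce the coefficient $\etau_0$ (degree $1$, not in the image of $\eta_R$) that must appear in $\psi_R Q_1(y)=Q_1\psi_R(y)+Q_0\psi_R(y)\cdot(1\otimes\etau_0)$ for $y\in\bH_{k\rho+1}\bX$. Carried out as written, your argument would prove the (false) relation $\psi_R Q_1(y)=Q_1\psi_R(y)$; the same failure occurs already in Baker's nonequivariant setting, where the $\xi_1$-term in $\psi_R Q_1$ on odd-degree classes cannot arise from Dyer--Lashof operations on the unit.

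The identification you flag as "the main obstacle" is therefore not an obstacle to be overcome but a genuinely incorrect step: the external Cartan coproduct of $\tilde{\bH}_\bigstar\Ptwo_\rho(S^V)$ relative to $S^V=S^V\wedge S^0$ is not the $\cAC_\bigstar$-coaction on $\tilde{\bH}_\bigstar\Ptwo_\rho(S^V)$. The latter records the attaching maps of the stunted projective spectrum $\Ptwo_\rho(S^V)$ (e.g.\ the $\mathrm{Sq}^1$ connecting $e_{2k\rho-\sigma-1}$ to $e_{2k\rho-\sigma}$), and this is exactly the topological input your argument discards. The fix, which is what the paper does following Baker, is to keep the coaction copy of $\bH$ \emph{outside} the extended power: one twists it past $\Ptwo_\rho(S^V)$ so that the element being pushed forward is $\psi_R(e)=\sum_i e_i\otimes a_i\in\tilde{\bH}_\bigstar\Ptwo_\rho(S^V)\otimes\cAC_\bigstar$ itself, and only the factor $\Ptwo_\rho(\psi_R x)$ — where $\psi_R x$ is viewed as a map $S^V\to\bH\wedge\bH\wedge\bX$ with a fresh coefficient copy of $\bH$ — passes through the extended power. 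The coefficients $a_i$ then simply ride along and are multiplied in at the end, which is how the nontrivial terms of the co-Nishida relation arise.
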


To parse this formula, note that 
\begin{enumerate}
\item $\bH$ is an equivariant $E_\infty$ ring and in particular an $E_\rho$-ring, so that $\bH \wedge \bX$ is an $E_\rho$-ring and the formula $\Theta^{e_i} \psi_R (x)$ is defined, and 
\item $\bMC (\bH \wedge \bX) \iso \bMC \bX \otimes_{\bMC} \cAC_\bigstar$ is a ring, so that it makes sense to multiply the two elements $\Theta^{e_i} \psi_R(x)$ and $1\otimes a_i$.
\end{enumerate}

\begin{pf}
As in \cite[Section 2]{Baker15}, we establish the formula by considering a large diagram. We will often suppress factors such as $1\wedge$ in a morphism, in order to avoid clutter. For instance, the first horizontal morphism should more properly be written as $1\wedge \Ptwo_\rho x$. 
There are many instances of the symbol $\bH$, and we use color to help distinguish between these. In some instances, we use a combination of colors on a single $\bH$ to denote a multiplication. For instance, we write
${\color{gray}\bH} \wedge {\color{blue}\bH} \wedge {\color{red}\bH} \to \Htwo{gray}{blue} \wedge {\color{red}\bH}$ to denote which copies of $\bH$ have been multiplied.

\adjustbox{scale=0.75,center}{%
\begin{tikzcd}
{\color{red}\bH} \wedge \Ptwo_\rho S^V \ar[d,"{\color{gray}\eta}"] \ar[r,"\Ptwo_\rho x"] 
&
{\color{red}\bH} \wedge \Ptwo_\rho (\bH \wedge \bX) \ar[d,"{\color{gray}\eta}"] \ar[r,"\alpha"]
&
{\color{red}\bH} \wedge  \bH \wedge \bX \ar[d,"{\color{gray}\eta}"] \ar[r,"\mu"]
& 
\Htwo{red}{black} \wedge \bX \ar[d,"{\color{gray}\eta}"]
\\
{\color{gray}\bH} \wedge {\color{red}\bH} \wedge \Ptwo_\rho S^V \ar[r," \Ptwo_\rho x"] \ar[d,"tw"]
&
{\color{gray}\bH} \wedge {\color{red}\bH} \wedge \Ptwo_\rho (\bH \wedge \bX)  \ar[r,"\alpha"] \ar[d,"tw"]
&
{\color{gray}\bH} \wedge {\color{red}\bH} \wedge \bH \wedge \bX \ar[r,"1\wedge \mu \wedge 1"] \ar[d,"tw"]
& 
{\color{gray}\bH} \wedge \Htwo{red}{black} \wedge \bX 
\\
{\color{gray}\bH} \wedge \Ptwo_\rho S^V \wedge {\color{red}\bH} \ar[r,"\Ptwo_\rho x"] \ar[dr,"\Ptwo_\rho(\psi_R x)" swap]
&
{\color{gray}\bH} \wedge \Ptwo_\rho (\bH \wedge \bX) \wedge {\color{red}\bH}  \ar[r,"\alpha"] \ar[d,"\Ptwo_\rho {\color{blue}\eta}"]
&
{\color{gray}\bH} \wedge \bH \wedge \bX \wedge {\color{red}\bH} \ar[r,equal] \ar[d," {\color{blue}\eta}"] 
& 
{\color{gray}\bH} \wedge \bH \wedge \bX \wedge {\color{red}\bH}\ar[u,"1\wedge \mu"]
\\
&
{\color{gray}\bH} \wedge \Ptwo_\rho({\color{blue}\bH} \wedge \bH \wedge \bX) \wedge {\color{red}\bH} \ar[r,"\alpha"]
&
{\color{gray}\bH} \wedge {\color{blue}\bH} \wedge \bH \wedge \bX \wedge {\color{red}\bH} \ar[r,"\mu\wedge 1"]
& 
\Htwo{gray}{blue}
\wedge \bH \wedge \bX \wedge {\color{red}\bH} \ar[u,equal]
\end{tikzcd}
}

Passing to degree $\SymPowerGenDeg$ homotopy then produces the commuting diagram of homology groups

\adjustbox{scale=0.75,center}{%
\begin{tikzcd}
{\color{red}\tilde{\bH}_\SymPowerGenDeg} \Ptwo_\rho S^V \ar[d,"{\color{gray}\eta}"] \ar[r,"{\color{red}\bH_\SymPowerGenDeg}\Ptwo_\rho x"] 
&
{\color{red}\bH_\SymPowerGenDeg} \Ptwo_\rho (\bH \wedge \bX) \ar[d,"{\color{gray}\eta}"] \ar[r,"{\color{red}\bH_\SymPowerGenDeg}\alpha"]
&
{\color{red}\bH_\SymPowerGenDeg} ( \bH \wedge \bX) \ar[d,"{\color{gray}\eta}"] \ar[r,"\mu"]
& 
\Htwo{red}{black}_\SymPowerGenDeg \bX \ar[d,"{\color{gray}\eta}"]
\\
{\color{gray}\bH_\SymPowerGenDeg} ({\color{red}\bH} \wedge \Ptwo_\rho S^V) \ar[r,"{\color{gray}\bH_\SymPowerGenDeg} \Ptwo_\rho x"] \ar[d,"{\color{gray}\bH_\SymPowerGenDeg}tw"]
&
{\color{gray}\bH_\SymPowerGenDeg} ( {\color{red}\bH} \wedge \Ptwo_\rho (\bH \wedge \bX))  \ar[r,"{\color{gray}\bH_\SymPowerGenDeg} \alpha"] \ar[d,"{\color{gray}\bH_\SymPowerGenDeg} tw"]
&
{\color{gray}\bH_\SymPowerGenDeg} ( {\color{red}\bH} \wedge \bH \wedge \bX ) \ar[r,"{\color{gray}\bH_\SymPowerGenDeg} \mu \wedge 1"] \ar[d,"{\color{gray}\bH_\SymPowerGenDeg} tw"]
& 
{\color{gray}\bH_\SymPowerGenDeg} ( \Htwo{red}{black} \wedge \bX )
\\
{\color{gray}\bH_\SymPowerGenDeg} ( \Ptwo_\rho S^V \wedge {\color{red}\bH}) \ar[r,"{\color{gray}\bH_\SymPowerGenDeg}\Ptwo_\rho x"] \ar[dr,"{\color{gray}\bH_\SymPowerGenDeg} \Ptwo_\rho(\psi_R x)" swap]
&
{\color{gray}\bH_\SymPowerGenDeg} ( \Ptwo_\rho (\bH \wedge \bX) \wedge {\color{red}\bH} ) \ar[r,"{\color{gray}\bH_\SymPowerGenDeg} \alpha"] \ar[d,"{\color{gray}\bH_\SymPowerGenDeg} \Ptwo_\rho {\color{blue}\eta}"]
&
{\color{gray}\bH_\SymPowerGenDeg} ( \bH \wedge \bX \wedge {\color{red}\bH}) \ar[r,equal] \ar[d,"{\color{gray}\bH_\SymPowerGenDeg} {\color{blue}\eta}"] 
& 
{\color{gray}\bH_\SymPowerGenDeg} ( \bH \wedge \bX \wedge {\color{red}\bH}) \ar[u,"{\color{gray}\bH_\SymPowerGenDeg} \mu"]
\\
&
{\color{gray}\bH_\SymPowerGenDeg } ( \Ptwo_\rho({\color{blue}\bH} \wedge \bH \wedge \bX) \wedge {\color{red}\bH} ) \ar[r,"{\color{gray}\bH_\SymPowerGenDeg} \alpha"]
&
{\color{gray}\bH_\SymPowerGenDeg} ( {\color{blue}\bH} \wedge \bH \wedge \bX \wedge {\color{red}\bH} ) \ar[r,"\mu"]
& 
\Htwo{gray}{blue}_\SymPowerGenDeg
( \bH \wedge \bX \wedge {\color{red}\bH} ) \ar[u,equal]
\end{tikzcd}
}

Tracing the element $e\in {\color{red}\tilde{\bH}_\SymPowerGenDeg} \Ptwo_\rho S^V$ around the diagram gives

\adjustbox{scale=0.9,center}{%
\begin{tikzcd}[column sep={4em}]
e \ar[rrr,|->] \ar[d,|->]
& & & 
\Theta^e x \ar[d,|->,xshift={-7ex},end anchor={[xshift=-1ex]}] 
\qquad \qquad 
\\
\psi_R e = \sum_i e_i \otimes a_i \ar[drrr,bend right=10,|->]
& & & 
\psi_R \Theta^e x = 
\sum_i \Theta^{e_i}\psi_R x \cdot (1\otimes a_i)
\\
& & &
\quad \sum_i (\Theta^{e_i}\psi_R x) \otimes a_i \ar[u,|->]
\end{tikzcd}
}

\end{pf}

\begin{eg}
\label{eg:CoactQzero}
We give an illustration of \cref{thm:equivarRightCoactionFormula}. 
Let $\bX$ be an $E_\rho$-algebra with $x\in \bH_{k\rho}\bX$. According to \cref{defn:DLErho}, we have $Q_0(x) = \Theta^{f_{2k\rho}}(x)$, where $f_{2k\rho} \in \tilde \bH_{2k\rho} \Ptwo_\rho S^{k\rho}$ is primitive according to \cref{prop:CoactrhoSymP}. The co-Nishida relation then says
\[
    \psi_R Q_0(x) = \psi_R \Theta^{f_{2k\rho}}(x) = \Theta^{f_{2k\rho}} \psi_R(x) = Q_0 \psi_R(x).
\]
Similarly, 
\[
    \psi_R Q_1(x) = \psi_R \Theta^{f_{2k\rho+\sigma}}(x) = \Theta^{f_{2k\rho+\sigma}} \psi_R(x) = Q_1 \psi_R(x).
\]
\end{eg}

\section{The $\cAC_\bigstar$-comodule $\bH_\bigstar \Omega^\rho S{^{\rho+1}}$}

\label{sec:ComoduleStructure}

Here, we describe the right $\cAC_\bigstar$-comodule structure on $\bH_\bigstar \Omega^\rho S{^{\rho+1}}$. This is the $RO(C_2)$-graded homotopy of $\bH \wedge \Omega^\rho {S^{\rho+1}}_+$.

\begin{notn}
    We will write $\OmRhoSRho$ for $(\Omega^\rho S^{\rho+1})_+$. That is, this is the space of based $\rho$-loops in $S^{\rho+1}$, but with a disjoint basepoint attached to the loop space.
\end{notn}

Nonequivariantly, the $\mathcal{A}_*$-comodule structure on $H_*\Omega^2S^3$ may be determined via the use of (co-)Nishida relations, as discussed in \cref{AcomodReview}. In particular, the $E_2$-algebra structure on $\Omega^2S^3$ allows one to express $H_*\Omega^2S^3$ as free over the Dyer--Lashof algebra on a single generator (see \cite[Example 1.5.7, Theorem 1.5.11]{Law}). The Nishida relations then allow one to deduce the coaction on all of $H_*\Omega^2S^3$ from that on the generator.

We will mimic this approach in the equivariant setting by using the equivalence
\[
    \bH\wedge \OmRhoSRho \simeq \bH\wedge \bH
\]
of Behrens--Wilson \cite{BW}, along with the determination of the action of equivariant Dyer--Lashof operations on the equivariant dual Steenrod algebra $\cAC_\bigstar$ by Wilson \cite{Wilson19}. We begin with the latter.

\begin{prop}[{\cite[Corollary~1.6.4]{Wilson19}}]
\label{DLetauexi}
\[
    Q_0(\etau_k) = \exi_{k+1}, \qquad \qquad
    Q_1(\etau_k) = \etau_{k+1} + \etau_0 \exi_{k+1},
\]
and
\[
    Q_0(\overline{\etau_k}) = \overline\exi_{k+1}, \qquad \qquad
    Q_1(\overline{\etau_k}) = \overline\etau_{k+1}.
\]    
\end{prop}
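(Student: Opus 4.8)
The plan is to verify all four identities by applying the underlying restriction map $\Res$ of \cref{UnderlyingA} and observing that $\Res$ is injective in the degrees at issue. Recall that $\cAC_\bigstar$ is a free $\bMC$-module on the admissible monomials $\prod_i \etau_i^{\epsilon_i}\prod_j \exi_j^{b_j}$ with $\epsilon_i\in\{0,1\}$ (the relation $\etau_i^2 = \usig\exi_{i+1}+\asig\etau_0\exi_{i+1}+\asig\etau_{i+1}$ lets one reduce all higher $\etau$-powers), so that $\ker\Res$ is the negative-cone summand of $\bMC$ tensored with this basis. The classes $Q_0(\etau_k)$ and $Q_1(\etau_k)$ sit in the positive degrees $(2^{k+1}-1)\rho$ and $(2^{k+1}-1)\rho+1$, which one expects to be free of negative-cone contributions, so it should suffice to check each formula after restriction. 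The conjugate identities are handled the same way, since $\Res$ is a map of Hopf algebroids and hence commutes with the antipode.

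Next I would record that $\Res$ intertwines the equivariant operations with the classical ones: because the little $\rho$-disks operad restricts to the little $2$-disks operad, the construction of \eqref{eq:DefineQkrho} restricts to the classical extended-power construction, so $\Res Q_0 = Q_0^{\cl}$ (the squaring) and $\Res Q_1 = Q_1^{\cl}$ (the classical top operation) on the underlying classes. Combined with $\Res(\etau_k) = \cxi_{k+1}/\usig^{2^k-1}$ and $\Res(\exi_k)=\cxi_k^2/\usig^{2^k-1}$, the first identity becomes $Q_0^{\cl}\cxi_{k+1}=\cxi_{k+1}^2$, while the second becomes the classical Dyer--Lashof computation $Q_1^{\cl}\cxi_{k+1}=\cxi_{k+2}+\cxi_1\cxi_{k+1}^2$ already used in \cref{AcomodReview}; here the summand $\etau_0\exi_{k+1}$ is accounted for precisely by $\cxi_1\cxi_{k+1}^2$, since $\Res(\etau_0)=\cxi_1$. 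For the conjugate generators the same reductions produce the cleaner identities $Q_0^{\cl}\cze_{k+1}=\cze_{k+1}^2$ and $Q_1^{\cl}\cze_{k+1}=\cze_{k+2}$ for the conjugate Milnor generators $\cze_i=\chi(\cxi_i)$, which is why the second conjugate formula carries no correction term.

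The main obstacle is the justification of the two structural inputs above: first, that $\Res$ (equivalently, the honest little-disks comparison) genuinely carries $Q_0,Q_1$ to the classical operations in the correct graded sense, keeping track of the $\usig$-powers that make $\Res$ a map of $RO(C_2)$-graded rings; and second, the injectivity claim, namely that $\cAC_\bigstar$ has no negative-cone classes in degrees $(2^{k+1}-1)\rho$ and $(2^{k+1}-1)\rho+1$. As a consistency check one might also apply the geometric fixed point map $\mFix$ of \cref{GeomFixedA}, but this is delicate: since $\mFix(\etau_k)=0$ while $\mFix(\exi_{k+1})\neq 0$, the operation $Q_0$ cannot commute with $\mFix$ in the naive way, reflecting the Frobenius-type behavior of power operations under geometric fixed points, so I would rely on $\Res$ for detection and use $\mFix$ only as a secondary check. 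A more conceptual route, avoiding the injectivity bookkeeping, is to use the Behrens--Wilson equivalence $\bH\wedge\OmRhoSRho\simeq\bH\wedge\bH$ of \cite{BW} to identify the conjugate generators $\overline{\etau}_k,\overline{\exi}_k$ with the iterated Dyer--Lashof operations on the fundamental class, yielding the clean conjugate formulas directly, and then to transport to $\etau_k,\exi_k$ via the antipode together with the defining relation above and the Cartan formula of \cite{Wilson19}.
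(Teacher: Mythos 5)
The paper offers no argument for this statement at all---it is quoted directly from \cite[Corollary~1.6.4]{Wilson19}---so the only question is whether your proposed proof stands on its own. It does not: the detection step at its core fails. You assert that $\ker\Res$ on $\cAC_\bigstar$ is the negative-cone part of $\bMC$ tensored with the monomial basis, but $\Res$ also kills $\asig$, so every $\asig$-multiple of a positive-cone class lies in the kernel. This matters in exactly the degree you need: $Q_1(\etau_k)$ lives in degree $(2^{k+1}-1)\rho+1$, and the class $\asig\exi_1^{2^{k+1}}$ (degree $-\sigma+2^{k+1}\rho=(2^{k+1}-1)\rho+1$) is a nonzero element of $\cAC_\bigstar$ in that degree which restricts to zero---as does $\asig m$ for any monomial $m$ in the $\exi_i$ of degree $2^{k+1}\rho$. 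So $\Res$ cannot distinguish $\etau_{k+1}+\etau_0\exi_{k+1}$ from, say, $\etau_{k+1}+\etau_0\exi_{k+1}+\asig\exi_1^{2^{k+1}}$, and the $Q_1$ formulas are precisely the ones where the interesting equivariant ($\asig$-divisible) correction terms could hide. (This is the same phenomenon as the $\asig$-terms in the relation $\etau_i^2=\usig\exi_{i+1}+\asig\etau_0\exi_{i+1}+\asig\etau_{i+1}$, which are invisible to $\Res$.) A correct proof must supplement restriction with a second detection tool---genuine fixed points, the Mackey/coaction structure, or a direct analysis of the extended power as in Wilson's own argument---and you correctly observe that $\mFix$ does not interact with the operations in the naive way, so this is not a routine patch.

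Two further cautions. First, even where $\Res$ does detect (e.g.\ the $Q_0$ formula in low degrees), the compatibility $\Res\circ Q_j=Q_j^{\cl}\circ\Res$ holds only up to a power of $\usig$ (for $x$ in degree $k\rho+1$ one finds $\Res Q_0(x)=\usig^{-1}(\Res x)^2$, not $(\Res x)^2$), so the bookkeeping you defer is not optional. Second, your ``more conceptual route'' is circular in the context of this paper: identifying $\overline{\etau}_k$ with $Q_1^k(\overline{\etau}_0)$ under the Behrens--Wilson equivalence is essentially the statement $Q_1(\overline{\etau}_k)=\overline{\etau}_{k+1}$ being proved, and the antipode is induced by the swap on $\bH\wedge\bH$, which interchanges the two $\bH$-module (and $E_\rho$-algebra) structures, so it does not commute with the Dyer--Lashof operations formally; passing between the barred and unbarred formulas is itself a nontrivial step in \cite{Wilson19}.
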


\begin{prop}[{\cite[Theorem~4.1]{BW}}]
\label{hlgyOmRhoSRho1}
    There is an isomorphism
    of $H_\bigstar$-algebras 
    \[
        \bMC \Omega^\rho S^{\rho+1}=\bMC[t_n,e_k\ |n\ge0,k\ge1]/(t_n^2=\asig t_{n+1}+\usig e_{n+1})
    \]
    where $|e_k|=(2^k-1)\rho$ and $|t_n|=(2^n-1)\rho+1$. The Dyer--Lashof operations satisfy
\begin{align*}
    Q_0(e_k)&=e_k^2&Q_1(e_k)&=0\\
    Q_0(t_n)&=e_{n+1}&Q_1(t_n)&=t_{n+1}
\end{align*}
\end{prop}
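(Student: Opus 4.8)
The plan is to identify $\Omega^\rho S^{\rho+1}=\Omega^\rho\Sigma^\rho S^1$ with the free $E_\rho$-algebra on the trivially-acted circle $S^1$, and then to read off both the ring structure and the Dyer--Lashof action from the homology of that free algebra. Since $S^1$ is connected, an equivariant approximation theorem for the little $\rho$-disks operad $\mathcal{C}_\rho$ gives $\Omega^\rho\Sigma^\rho S^1\simeq\coprod_r\mathcal{C}_\rho(r)_+\wedge_{\Sigma_r}S^r$, with no group completion needed. Stably this is the equivariant Snaith splitting $\Sigma^\infty_{C_2}\OmRhoSRho\simeq\bigvee_r\Sigma^\infty\mathrm{Sym}^r_\rho(S^1)$, so $\bMC\Omega^\rho S^{\rho+1}\cong\bigoplus_r\tMC\mathrm{Sym}^r_\rho(S^1)$ and the whole computation reduces to the homology of iterated extended powers of the fundamental class $t_0$, where $\tMC S^1=\bMC\{t_0\}$ with $|t_0|=1$. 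By the description of the homology of a free $E_\rho$-algebra as free over the equivariant Dyer--Lashof operations, I would extract a monomial basis from the extended-power computations of \cite{BW}. (The Behrens--Wilson equivalence $\bH\wedge\OmRhoSRho\simeq\bH\wedge\bH$ identifies the underlying $\bMC$-module with $\cAC_\bigstar$ and is a useful consistency check, but the ring and operation structure coming from the loop multiplication is what I would compute directly from the free-algebra description.)

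Next I would produce the named classes by iterating operations on $t_0$. As $t_0$ lies in degree $0\cdot\rho+1$, \eqref{eq:Q0Q1krho+1} yields $Q_1 t_0\in\bH_{\rho+1}$ and $Q_0 t_0\in\bH_\rho$; setting $t_n:=Q_1^n t_0$ and $e_n:=Q_0 t_{n-1}$ and inducting on degrees gives $|t_n|=(2^n-1)\rho+1$ and $|e_n|=(2^n-1)\rho$, together with the formulas $Q_0 t_n=e_{n+1}$ and $Q_1 t_n=t_{n+1}$ by construction. The free-algebra basis then presents $\bMC\Omega^\rho S^{\rho+1}$ as the free commutative $\bMC$-algebra on these generators, modulo the relations recorded in the statement.

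The one genuinely equivariant ingredient is the squaring relation: for $x$ in degree $\equiv 1\pmod\rho$ one has $x^2=\usig Q_0 x+\asig Q_1 x$. A degree count shows that $\usig Q_0 x$, $\asig Q_1 x$, and $x^2$ all lie in degree $2k\rho+2$ when $|x|=k\rho+1$, and the identity records the class of the diagonal $S^{2V}\to\Ptwo_\rho(S^V)$ in terms of the two generators of $\tMC\Ptwo_\rho(S^{k\rho+1})\cong\bMC\{e_{(2k+1)\rho},e_{(2k+1)\rho+1}\}$ from \cite[Proposition~3.3]{BW}. Applied to $x=t_n$ this is exactly $t_n^2=\usig e_{n+1}+\asig t_{n+1}$. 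For classes in degree $\equiv 0\pmod\rho$ the bottom operation is the honest square, which gives $Q_0 e_k=e_k^2$; and finally $Q_1 e_k=0$ is an equivariant Adem relation for the composite $Q_1 Q_0$ (equivalently, the vanishing of the relevant piece of the homology of a double extended power).

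The main obstacle is precisely these last two equivariant facts, the squaring relation $x^2=\usig Q_0 x+\asig Q_1 x$ and the vanishing $Q_1 e_k=0$, since they have no classical counterpart (classically $Q_0$ is always the square and the analogous relation is trivial). Establishing them requires a chain-level understanding of the $\Sigma_2$-extended powers $\Ptwo_\rho(S^V)$, in particular the image of the diagonal and the equivariant Adem relations, carried out via the cofiber sequences for $\Ptwo_\rho$ as in \cite[Proposition~3.3]{BW}. This is where the real content of the theorem lies.
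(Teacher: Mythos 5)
Your route is essentially the original Behrens--Wilson one (free $E_\rho$-algebra on $S^1$, Snaith splitting, extended powers), whereas the paper argues differently: it upgrades $\bH\wedge\OmRhoSRho\simeq\bH\wedge\bH$ to an equivalence of $E_\rho$-$\bH$-algebras, transports the Hu--Kriz presentation of $\cAC_\bigstar$ (defining $e_k$ and $t_n$ as the images of the \emph{conjugates} $\overline{\exi_k}$, $\overline{\etau_n}$), quotes Wilson's computation $Q_0(\overline{\etau_k})=\overline{\exi}_{k+1}$, $Q_1(\overline{\etau_k})=\overline{\etau}_{k+1}$ for the $t$-formulas, and handles the $e$-formulas by showing $\overline{\exi_k}$ lifts to $\bH\wedge MU_\R$ (\cref{lemma:xicomesfromMUR}), where $Q_0(x)=x^2$ follows from injectivity of restriction and $Q_1(x)=0$ follows from $\bH_{k\rho+\sigma}MU_\R=0$. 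That comparison is fine as far as it goes, but your write-up has genuine gaps at exactly the load-bearing points.

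First, you invoke ``the description of the homology of a free $E_\rho$-algebra as free over the equivariant Dyer--Lashof operations'' to extract a monomial basis. No such general structure theorem exists in the $C_2$-equivariant setting; for this particular free algebra that description \emph{is} the content of \cite[Theorem~4.1]{BW}, so as written the argument is circular. You would need to actually run the filtration/extended-power induction (or, as the paper does, import the answer from the known computation of $\cAC_\bigstar$). Second, the two facts you correctly single out as ``where the real content lies'' --- the squaring relation $x^2=\usig Q_0x+\asig Q_1x$ for $x$ in degree $k\rho+1$, and $Q_1e_k=0$ --- are not proved but deferred to tools that are not available: there are no established Adem relations for equivariant $E_\rho$-algebras, and the classical derivation of $Q_1Q_0=0$ from the Cartan formula breaks down here precisely because $Q_0$ of a degree-$(k\rho+1)$ class is \emph{not} its square (that is what the squaring relation says). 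Even your assertion that $Q_0$ is the honest square on degree-$k\rho$ classes needs an argument ruling out an $\asig f_{2k\rho+\sigma}$ correction term in the image of the bottom cell of $\Ptwo_\rho(S^{k\rho})$; the paper only establishes $Q_0(x)=x^2$ for classes lifting to $MU_\R$, via a restriction argument. So the proposal is a reasonable roadmap for re-deriving the Behrens--Wilson theorem, but the steps that distinguish the equivariant statement from the classical one are asserted rather than proved.
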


\begin{proof}
    We claim there is an equivalence of $E_\rho$-$\bH$-algebras
\[
    \bH \wedge \OmRhoSRho
\simeq \bH \wedge \bH.
\]
Indeed, following \cite[Section 5]{BW}, let
\[
    \mathrm{Free}_{E_\rho,\mathbf{H}}^*:\mathrm{Alg}_{E_0}(\mathrm{Mod}_{\mathbf{H}})\to \mathrm{Alg}_{E_\rho}(\mathrm{Mod}_{\mathbf{H}})
\]
denote the left adjoint to the forgetful functor. It follows from the proof of \cite[Theorem 5.1]{BW} that $\mathrm{Free}_{E_\rho,\bH}^*(\bH\wedge S^1_+)=\mathbf{H}\wedge (\OmRhoSRho)$. By adjunction, the class $\etau_0\in\pi_1(\bH\wedge \bH)$ therefore provides a map of $E_\rho$-algebras
\[
    \bH\wedge \OmRhoSRho \to \bH\wedge \bH
\]
sending $t_0\mapsto \etau_0$. The proof of \cite[Theorem 1.2]{BW} shows this map is an equivalence, proving the claim. The claimed isomorphism of $\bMC$-algebras then follows from the Hu--Kriz computation of $\bMC \bH$ \cite[Section 6]{HK} (see also \cite[Section 2]{LSWX}) by transferring $
\overline{\exi_k}$ and $\overline{\etau_n}$ along this isomorphism to define $e_k$ and $t_n$.

Moreover, we may prove the claims about Dyer--Lashof operations on the classes $\overline{\exi_k}$ and $\overline{\etau_n}$ in $\bMC \bH$. The class $\overline{\exi_k}$ is the Hopf algebroid conjugate of the class $\exi_k$. By \cref{lemma:xicomesfromMUR}, the class $\exi_k$ is in the image of the map of Hopf algebroids ${MU_\R}_\bigstar MU_\R\to \bH_\bigstar \bH= \cAC_\bigstar$, hence so is its conjugate $\overline{\exi_k}$. In particular, the class $\overline{\exi_k}$
is in the image of the map of $C_2$-$E_\infty$-$\bH$-algebras
\[
    \bH\wedge MU_\R\to \bH\wedge \bH,
\]
and we claim that if $x\in \bH_{k\rho}MU_\mathbb{R}$, then $Q_0(x)=x^2$ and $Q_1(x)=0$. For this, note that $\underline{\bH}_{k\rho}MU_\mathbb{R}$ is a sum of $\underline{\F_2}$'s, as a Mackey functor, and $\bH_{k\rho+\sigma}MU_\mathbb{R}=0$ as follows from \cite[Proposition 2.8]{LSWX}. In particular, the restriction map is injective in these degrees, so the claim about $Q_0$ follows from the fact that $Q_0(x)$ restricts to $Q_0(\mathrm{res}(x))=\mathrm{res}(x)^2$, and the claim about $Q_1$ on $e_k$ follows for degree reasons. The remaining two equations 
{are given in \cref{DLetauexi}}.
\end{proof}

\begin{rmk}
    Although we use the Behrens--Wilson computation of $\bMC \Omega^\rho S^{\rho+1}$ \cite[Theorem~4.1]{BW} above, our generators $e_i$ and $t_j$ are defined differently from those of the same names in Behrens--Wilson. Indeed, Behrens--Wilson define $e_i$ as the norm of $x_i$, where $x_1\in H_1\Omega^2 S^3$ is the fundamental class and $x_i=Q_1x_{i-1}$, and they define $t_0\in \bH_1\Omega^\rho S^{\rho+1}$ as the fundamental class and $t_j=Q_1t_{j-1}$. This allows them to establish the equivalence 
    \[
    \bH\wedge \OmRhoSRho \simeq \bH\wedge \bH
\]
we use above, which we then use to redefine the $e_i$'s
 and $t_j$'s as the image of the $
\overline{\exi_i}$'s and $\overline{\etau_j}$'s, respectively along this isomorphism. However, it is straightforward to show that these two definitions coincide using the above values of $Q_1$ and the fact that the operation $Q_0$ agrees with the norm on the classes $\overline{\etau_j}$'s. This latter fact follows from the fact the restriction map is injective in the degree of $Q_0\overline{\etau_j}$.
\end{rmk}

We will need a description of $\bMC \Omega^\rho S^{\rho + 1}$ as an $\cEC_\bigstar$-comodule. 
In fact, we can do better and describe $\bMC \Omega^\rho S^{\rho + 1}$ as an $\cAC_\bigstar$-comodule. According to \cref{thm:equivarRightCoactionFormula}, the $\cAC_\bigstar$-coaction on $\bMC \Omega^\rho S^{\rho + 1}$ can be computed inductively given the coaction on the {homology of the} $E_\rho$-extended powers $\Ptwo_\rho(S^{k\rho})$
and $\Ptwo_\rho(S^{k\rho-\sigma})$. The former coaction was described \cref{prop:CoactrhoSymP}.
We now establish the $\cAC_\bigstar$-coaction on the homology of $\Ptwo_\rho(S^{k\rho-\sigma})$.

\begin{lem}
    \label{lem:FixPtsSymrhominussig}
    The fixed points of $\Ptwo_\rho(S^{k\rho-\sigma})$ can be identified as
    \[
        \Ptwo_\rho(S^{k\rho-\sigma})^{C_2} \iso S^{2k-1} \vee S^{2k}.
    \]
\end{lem}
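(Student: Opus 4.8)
The plan is to compute these fixed points directly from the configuration-space model of the extended power, being careful to track both the honest and the ``twisted-diagonal'' fixed loci. First I would record that for the little $\rho$-disks operad one has a $C_2\times\Sigma_2$-equivalence $\mathcal{C}_\rho(2)\simeq S(\rho\otimes\sigS)$, the unit sphere of the two-dimensional representation $\rho\otimes\sigS\cong\sigS\oplus(\sigma\otimes\sigS)$, on which $\Sigma_2$ acts freely (antipodally) and $C_2$ acts through $\rho$. By definition $\Ptwo_\rho(S^{k\rho-\sigma})=\mathcal{C}_\rho(2)_+\wedge_{\Sigma_2}S^{(k\rho-\sigma)\otimes\rhoS}$, and a direct expansion shows that
\[
    W:=(k\rho-\sigma)\otimes\rhoS\cong k\cdot 1\oplus k\,\sigS\oplus(k-1)\,\sigma\oplus(k-1)\,(\sigma\otimes\sigS)
\]
is an \emph{honest} $C_2\times\Sigma_2$-representation for $k\ge1$ (the $-\sigma$ is absorbed after tensoring with $\rhoS$). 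Thus $\Ptwo_\rho(S^{k\rho-\sigma})$ is the suspension spectrum of a based $C_2$-space, and its fixed points agree with the suspension spectrum of the fixed subspace of that space, which is what I would compute.

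The heart of the argument is the decomposition of $\big(\mathcal{C}_\rho(2)_+\wedge_{\Sigma_2}S^W\big)^{C_2}$. Since $\Sigma_2$ acts freely on $\mathcal{C}_\rho(2)$, a point $[p,z]$ of the balanced product is $C_2$-fixed precisely when $g\cdot p=\lambda\cdot p$ and $g\cdot z=\lambda\cdot z$ for the unique $\lambda\in\Sigma_2$ determined by $p$; hence the fixed locus splits over the two $C_2$-fixed points of the quotient $\mathcal{C}_\rho(2)/\Sigma_2\cong\R P^1$. These two points are the images of the two disjoint free $\Sigma_2$-orbits
\[
    \mathcal{C}_\rho(2)^{C_2}=S(\sigS)=\{E,W\},
    \qquad
    \mathcal{C}_\rho(2)^{\Delta}=S(\sigma\otimes\sigS)=\{N,S\},
\]
where $\Delta\le C_2\times\Sigma_2$ is the diagonal copy of $C_2$. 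Over the first ($\lambda=e$) the fibre contributes the ordinary fixed sphere $(S^W)^{C_2}=S^{W^{C_2}}$; over the second ($\lambda=\tau$) the residual $C_2$-action on the fibre is twisted by the transposition, so it contributes the diagonal fixed sphere $(S^W)^{\Delta}=S^{W^{\Delta}}$. As the two loci are disjoint, this yields
\[
    \Ptwo_\rho(S^{k\rho-\sigma})^{C_2}\cong S^{W^{C_2}}\vee S^{W^{\Delta}}.
\]
The step I expect to be the main obstacle---and the one most easily overlooked---is the appearance of the second, diagonal summand: a naive computation of $\Phi^{C_2}$ that remembers only the genuinely $C_2$-fixed configurations $\{E,W\}$ would produce a single sphere. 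The twisted-diagonal locus $\{N,S\}$, where the equivariance involution coincides with the operadic transposition, is exactly what supplies the lower-dimensional cell.

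Finally I would read off the two dimensions from $W$. The subgroup $C_2=C_2\times1$ acts trivially on the summands $1$ and $\sigS$ and by $-1$ on $\sigma$ and $\sigma\otimes\sigS$, so $W^{C_2}=k\cdot1\oplus k\,\sigS$ has real dimension $2k$. The diagonal $\Delta$ acts by $+1$ on $1$ and on $\sigma\otimes\sigS$ and by $-1$ on $\sigS$ and on $\sigma$, so $W^{\Delta}=k\cdot1\oplus(k-1)(\sigma\otimes\sigS)$ has real dimension $2k-1$. Therefore $\Ptwo_\rho(S^{k\rho-\sigma})^{C_2}\cong S^{2k-1}\vee S^{2k}$, as claimed. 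As a consistency check, the same method applied to the honest sphere $S^{k\rho}$ gives $W^{C_2}$ and $W^{\Delta}$ both of real dimension $2k$, recovering the expected $S^{2k}\vee S^{2k}$ for $\Ptwo_\rho(S^{k\rho})$; the drop by one in the diagonal summand here is precisely the effect of the $-\sigma$.
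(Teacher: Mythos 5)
Your proof is correct and follows essentially the same route as the paper's: both identify the two fixed loci of the balanced product — the honestly $C_2$-fixed configurations over $S(\sigS)$ and the twisted-diagonal locus over $S(\sigma\otimes\sigS)$ — and compute the fiber dimensions $2k$ and $2k-1$ as the fixed subspaces of $(k\rho-\sigma)\otimes\rhoS$ under $C_2\times 1$ and $\Delta$ respectively. The paper phrases this as a direct case analysis on points $(\mathbf{x},\mathbf{y})$ of the Thom space rather than splitting over the fixed points of $\R P^1$, but the content is identical.
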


\begin{pf}
    The extended power $\Ptwo_\rho(S^{k\rho-\sigma})$ is the Thom space of the bundle $S(\rhoC \otimes \sigS) \times_{\Sigma_2} ((k\rhoC-\sigC) \otimes \rhoS)$ over $S(\rhoC\otimes \sigS)/\Sigma_2$. Suppose that 
    \[
        (\mathbf{x},\mathbf{y}) \in S(\rhoC \otimes \sigS)\times (k\rhoC-\sigC)\otimes \rhoS
    \]
    becomes fixed by $C_2$ after passage to $\Sigma_2$-orbits. Let use write $\gamma$ and $\usig$ for the generators of $C_2$ and $\Sigma_2$, respectively. 

    {\bf Case 1:} The pair $(\mathbf{x},\mathbf{y})$ is already fixed by $\gamma$ before passage to $\Sigma_2$-orbits. Then $\mathbf{x}$ lies in $S(\mathbf{1}\otimes \sigS)\iso \Sigma_2/e$ and $\mathbf{y}$ lies in $k\mathbf{1}\otimes \rhoS$. In other words, after passage to $\Sigma_2$-orbits, the space of all such $(\mathbf{x},\mathbf{y})$ is 
    \[
        S(\mathbf{1}\otimes \sigS) \times_{\Sigma_2} (k\mathbf{1}\otimes \rhoS) \iso \R^{2k},
    \]
    which Thomifies to $S^{2k}$.

    {\bf Case 2:} The pair $(\mathbf{x},\mathbf{y})$ is not fixed by $\gamma$ before passage to $\Sigma_2$-orbits. Then 
    \[
        (\gamma\mathbf{x},\gamma\mathbf{y}) = (-\mathbf{x},\usig\mathbf{y})
    \]
    In other words, $\mathbf{x}$ lies in $S(\sigC\otimes \sigS)\iso (C_2\times\Sigma_2)/\Delta$ and $\mathbf{y}$ is fixed by $\Delta\leq C_2\times \Sigma_2$. But the $\Delta$-fixed points of $(k\rhoC-\sigC) \otimes \rhoS$ has dimension $2k-1$. Thus, after passage to $\Sigma_2$-orbits, the space of all such $(\mathbf{x},\mathbf{y})$ is 
    \[
        S(\sigC\otimes \sigS) \times_{\Sigma_2} ((k\rhoC-\sigC) \otimes \rhoS)^\Delta \iso (C_2\times \Sigma_2)/\Delta \times_{\Sigma_2} \R^{2k-1} \iso \R^{2k-1}.
    \]
    Thus, these fixed points constitute a disjoint copy of $S^{2k-1}$ in $\Ptwo_\rho(S^{k\rho-\sigma})^{C_2}$.
\end{pf}

\begin{prop}
    The right $\cAC_\bigstar$-coaction on 
    \[
        \tMC (\Ptwo_\rho (S^{k\rho-\sigma})) \iso \bMC \{e_{2k\rho-\sigma-1},e_{2k\rho-\sigma}\}
    \]
    is given by
    \begin{align*}
        \psi_R(e_{2k\rho-\sigma-1})&=e_{2k\rho-\sigma-1}\otimes1\\
        \psi_R(e_{2k\rho-\sigma})&=
            e_{2k\rho-\sigma}\otimes1+e_{2k\rho-\sigma-1}\otimes\etau_0 
    \end{align*}
\end{prop}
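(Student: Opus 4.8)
The plan is to pin down the coaction from its general shape together with naturality under the underlying and (modified) geometric fixed point homomorphisms, in exactly the style of the proofs of \cref{UnderlyingA} and \cref{GeomFixedA}. The homology computation $\tMC \Ptwo_\rho(S^{k\rho-\sigma}) \iso \bMC\{e_{2k\rho-\sigma-1},e_{2k\rho-\sigma}\}$ follows as in \cref{prop:CoactrhoSymP}: the fixed points are identified in \cref{lem:FixPtsSymrhominussig} as $S^{2k-1}\vee S^{2k}$, while the underlying space is $\Res\,\Ptwo_\rho(S^{k\rho-\sigma}) \simeq \Ptwo_2(S^{2k-1}) \simeq \Sigma^{2k-1}\R\bP^{2k}_{2k-1}$, with cells in underlying degrees $4k-2$ and $4k-1$. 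Under these identifications the two equivariant generators restrict to the bottom and top classes $e_{2(2k-1)}$ and $e_{2(2k-1)+1}$ of the stunted projective space, and they map under $\mFix$ to the fundamental classes of $S^{2k-1}$ and $S^{2k}$.

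First I would dispose of the bottom class. Since $e_{2k\rho-\sigma-1}$ has the lowest degree, any correction term in $\psi_R(e_{2k\rho-\sigma-1})$ has the form $e_{2k\rho-\sigma}\otimes a'$ with $a'\in\cAC_\bigstar$ in degree $-1$; as in \cref{prop:CoactrhoSymP} this vanishes for degree reasons (both $\Res$ and $\mFix$ carry the coaction to the primitive coaction on $e_{2(2k-1)}$ and on the sphere $S^{2k-1}$ respectively), so the bottom class is primitive. For the top class, the counit axiom together with degree considerations force
\[
    \psi_R(e_{2k\rho-\sigma}) = e_{2k\rho-\sigma}\otimes 1 + e_{2k\rho-\sigma-1}\otimes a
\]
for a single unknown $a\in\cAC_\bigstar$ of degree $1=\rho-\sigma$, and the task reduces to showing $a=\etau_0$.

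To identify $a$, I would compute its images under $\Res$ and $\mFix$. Applying $\Res$ and using naturality of the coaction identifies $\psi_R$ with the classical $\cAcl_*$-coaction on $\Ptwo_2(S^{2k-1})$; since $2k-1$ is odd, the formula recalled in \cref{AcomodReview} gives $\psi_R(e_{2(2k-1)+1}) = e_{2(2k-1)+1}\otimes 1 + e_{2(2k-1)}\otimes\cxi_1$, so that $\Res(a)=\cxi_1=\Res(\etau_0)$ by \cref{UnderlyingA}. Applying $\mFix$ and using that $\Ptwo_\rho(S^{k\rho-\sigma})^{C_2}\simeq S^{2k-1}\vee S^{2k}$ is a wedge of spheres, whose reduced homology is $\cAcl_*$-primitive, shows that the fixed-point coaction is trivial; comparing the coefficient of $\mFix(e_{2k\rho-\sigma-1})$ then forces $\mFix(a)=0=\mFix(\etau_0)$ by \cref{GeomFixedA}.

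Finally I would conclude using the explicit structure of $\cAC_\bigstar$: its degree-$1$ part is spanned by $\etau_0$ and $\asig\exi_1$, and these are distinguished by $\mFix$, since $\mFix(\etau_0)=0$ while $\mFix(\asig\exi_1)=\asig\cdot(\cxi_1/\asig)=\cxi_1\neq 0$. Hence the two conditions $\Res(a)=\cxi_1$ and $\mFix(a)=0$ force $a=\etau_0$, giving the stated formula. I expect this last step to be the main obstacle: the underlying homomorphism alone cannot separate $\etau_0$ from $\etau_0+\asig\exi_1$, since both restrict to $\cxi_1$, so the $\asig$-divisible ambiguity must be eliminated via geometric fixed points. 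This is precisely why the fixed-point computation of \cref{lem:FixPtsSymrhominussig} is needed, and it is the equivariant feature with no classical counterpart.
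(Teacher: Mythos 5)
Your proposal is correct and follows essentially the same route as the paper: the bottom class is primitive because $\cAC_{-1}=0$, the top class's coaction term $a\in\cAC_1=\F_2\{\etau_0,\asig\exi_1\}$ is pinned down by applying $\Res$ (comparing with the classical coaction on $\Sigma^{2k-1}\R\bP^{2k}_{2k-1}$, using that $2k-1$ is odd) and $\mFix$ (using \cref{lem:FixPtsSymrhominussig} and the triviality of the coaction on a wedge of spheres). The only cosmetic difference is that the paper writes $a=\delta\etau_0+\varepsilon\asig\exi_1$ and solves for the two coefficients separately, whereas you package the same two detections as injectivity of $(\Res,\mFix)$ on $\cAC_1$.
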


\begin{proof}
    The homology was computed in \cite[Proposition~3.3]{BW}.
    The bottom class $e_{2k\rho-\sigma-1}$ is primitive for degree reasons, as $\cAC_{-1}=0$. One has that $\cAC_1=\F_2\{\etau_0, \asig\exi_1 \}$,
    so one has a coaction formula of the form
    \[
    \psi_R(e_{2k\rho-\sigma})=e_{2k\rho-\sigma}\otimes1+\delta\, e_{2k\rho-\sigma-1}\otimes \etau_0 + \varepsilon\, e_{2k\rho-\sigma-1}\otimes \asig \exi_1
    \]
    for some $\delta, \varepsilon\in\F_2$. 
    
    We first show that the coefficient $\delta$ must be $1$. This can be seen by applying  the restriction map, since the underlying space
    \[
        \Res \Ptwo_\rho S^{k\rho-\sigma}\simeq \Sigma^{2k-1} \R\mathbb{P}_{2k-1}^{2k}
    \]
    has a $\xi_1$-term in the coaction on the top cell, and the restriction of $\etau_0$ is $\xi_1$ according to \cref{UnderlyingA}.   
   
    To determine $\varepsilon$, we consider fixed points. \cref{lem:FixPtsSymrhominussig} identifies the fixed points of $\Ptwo_\rho S^{k\rho-\sigma}$ as $S^{2k-1}\vee S^{2k}$.     
    Thus $H_* \Big((\Ptwo_\rho S^{k\rho-\sigma})^{C_2}\Big) \iso H_*\{e_{2k-1},e_{2k}\}$ has a trivial coaction. On the other hand, \cref{GeomFixedA} gives a derivation of the coaction from the $\cAC_\bigstar$-coaction on $ \tMC (\Ptwo_\rho (S^{k\rho-\sigma}))$. That formula gives $\psi_R(e_{2k})= e_{2k}\otimes 1 + \varepsilon e_{2k-1}\otimes \cxi_1$. It follows that $\varepsilon$ is 0.
    
\end{proof}

This calculation allows us to employ the co-Nishida relations on classes in degree $k\rho+1$. 

\begin{cor}
    \label{cor:CoactQdegreekrhoplusone}
    Let $\bX$ be an $E_\rho$-algebra and let $x\in \bH_{k\rho+1}\bX$. Then
    \[
        \psi_R Q_0(x) =Q_0 \psi_R (x)
    \]
    and
    \[
        \psi_R Q_1(x) = Q_1 \psi_R(x) + Q_0 \psi_R(x) \cdot (1\otimes \etau_0).
    \]
\end{cor}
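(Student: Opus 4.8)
The plan is to deduce this directly from the general co-Nishida relation \cref{thm:equivarRightCoactionFormula}, together with the coaction on the relevant extended power, in exactly the way \cref{eg:CoactQzero} handled the degree-$k\rho$ case. The key observation is the representation-theoretic identity $k\rho+1=(k+1)\rho-\sigma$ in $RO(C_2)$, which holds because $\rho=1+\sigma$. Consequently $\Ptwo_\rho(S^{k\rho+1})=\Ptwo_\rho(S^{(k+1)\rho-\sigma})$, so the coaction I need is precisely the one computed in the preceding proposition, read off with $k$ replaced by $k+1$; no new extended-power computation is required.

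First I would record the homology identification $\tMC\Ptwo_\rho(S^{k\rho+1})\cong\bMC\{e_{(2k+1)\rho},e_{(2k+1)\rho+1}\}$ (as in the proof of \cref{StableOpsComp}, following \cite[Proposition~3.3]{BW}), and check that under the identity above the two generators $e_{2(k+1)\rho-\sigma-1}$ and $e_{2(k+1)\rho-\sigma}$ of the preceding proposition match $e_{(2k+1)\rho}$ and $e_{(2k+1)\rho+1}$ respectively, since $2(k+1)\rho-\sigma-1=(2k+1)\rho$ and $2(k+1)\rho-\sigma=(2k+1)\rho+1$. Substituting $k\mapsto k+1$ into that proposition then yields
\[
\psi_R(e_{(2k+1)\rho})=e_{(2k+1)\rho}\otimes1, \qquad
\psi_R(e_{(2k+1)\rho+1})=e_{(2k+1)\rho+1}\otimes1+e_{(2k+1)\rho}\otimes\etau_0.
\]

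Next I would recall that, by the definition of the operations in \eqref{eq:Q0Q1krho+1} (following the pattern of \cref{defn:DLErho}), for $x\in\bH_{k\rho+1}\bX$ one has $Q_0(x)=\Theta^{e_{(2k+1)\rho}}(x)$ and $Q_1(x)=\Theta^{e_{(2k+1)\rho+1}}(x)$. Applying \cref{thm:equivarRightCoactionFormula} to $e=e_{(2k+1)\rho}$, whose coaction is trivial, gives at once $\psi_R Q_0(x)=\Theta^{e_{(2k+1)\rho}}\psi_R(x)=Q_0\psi_R(x)$. Applying it to $e=e_{(2k+1)\rho+1}$ and expanding the coaction into its two summands gives
\[
\psi_R Q_1(x)=\Theta^{e_{(2k+1)\rho+1}}\psi_R(x)+\Theta^{e_{(2k+1)\rho}}\psi_R(x)\cdot(1\otimes\etau_0)=Q_1\psi_R(x)+Q_0\psi_R(x)\cdot(1\otimes\etau_0),
\]
where the last equality uses that $\psi_R(x)$ again lies in total degree $k\rho+1$ in $\bMC(\bH\wedge\bX)$, so that $\Theta^{e_{(2k+1)\rho}}$ and $\Theta^{e_{(2k+1)\rho+1}}$ applied to it are by definition $Q_0\psi_R(x)$ and $Q_1\psi_R(x)$.

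The argument is essentially bookkeeping, so I expect no serious obstacle; the one point that must be handled with care is the identity $k\rho+1=(k+1)\rho-\sigma$ and the resulting matching of generators, since this is exactly what allows the already-established coaction on $\Ptwo_\rho(S^{(k+1)\rho-\sigma})$ to be reused. I would also take care that the product $Q_0\psi_R(x)\cdot(1\otimes\etau_0)$ is formed in the ring $\bMC(\bH\wedge\bX)\cong\bMC\bX\otimes_{\bMC}\cAC_\bigstar$, as flagged in the remarks following \cref{thm:equivarRightCoactionFormula}.
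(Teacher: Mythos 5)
Your proposal is correct and matches the paper's (implicit) argument exactly: the paper states this corollary as an immediate consequence of the preceding proposition on $\Ptwo_\rho(S^{k\rho-\sigma})$ together with \cref{thm:equivarRightCoactionFormula}, using precisely the identification $k\rho+1=(k+1)\rho-\sigma$ and the resulting matching of generators that you spell out. Your degree bookkeeping ($2(k+1)\rho-\sigma-1=(2k+1)\rho$, etc.) is accurate, so nothing is missing.
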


We are now ready to deduce the coaction on $\bMC \Omega^\rho S^{\rho + 1}$.

\begin{prop} \label{prop:A-comod0rhoSrho+1}
    The right $\cAC_\bigstar$-comodule structure on $\bMC \Omega^\rho S^{\rho + 1}$ is given by
    \begin{align*}
        \psi_R(e_{k}) & = \sum_{j=0}^{k-1} e_{k-j}^{2^j} \otimes \exi_{j}, \\
        \psi_R (t_k) & = t_k \otimes 1 + \sum_{j=0}^{k-1} e_{k-j}^{2^j} \otimes \etau_j. 
    \end{align*}
\end{prop}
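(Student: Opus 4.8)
The plan is to mimic the classical inductive argument of \cref{AcomodReview}. Since every generator is produced from the single fundamental class $t_0$ by the Dyer--Lashof operations via $e_{n+1}=Q_0(t_n)$ and $t_{n+1}=Q_1(t_n)$ (\cref{hlgyOmRhoSRho1}), I would run a single induction on $n$ that simultaneously computes $\psi_R(t_n)$ and, from it, $\psi_R(e_{n+1})$. Because $t_n$ lies in degree $(2^n-1)\rho+1$, the relevant co-Nishida relations are those of \cref{cor:CoactQdegreekrhoplusone}, namely $\psi_R Q_0(t_n)=Q_0\psi_R(t_n)$ and $\psi_R Q_1(t_n)=Q_1\psi_R(t_n)+Q_0\psi_R(t_n)\cdot(1\otimes\etau_0)$, where the operations $Q_0,Q_1$ on the right are computed in the $E_\rho$-algebra $\bH\wedge\bX$ whose homology is $\bMC\bX\otimes_{\bMC}\cAC_\bigstar$. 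The whole calculation is then an application of the Cartan formula using the values $Q_0(e_k)=e_k^2$, $Q_1(e_k)=0$ from \cref{hlgyOmRhoSRho1} together with $Q_0(\etau_j)=\exi_{j+1}$, $Q_1(\etau_j)=\etau_{j+1}+\etau_0\exi_{j+1}$ from \cref{DLetauexi}.

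For the base case I would show $t_0$ is primitive. Since $|t_0|=1$ and $\cAC_1=\F_2\{\etau_0,\asig\exi_1\}$, the only term other than $t_0\otimes 1$ that can appear by degree is $1\otimes\etau_0$. I would rule it out by applying the underlying homomorphism $\Res$: the class $t_0$ maps to a unit multiple of the fundamental class of $H_*\Omega^2S^3$, which is primitive, while $\Res(\etau_0)=\cxi_1\neq 0$ by \cref{UnderlyingA}. Hence the putative coefficient of $1\otimes\etau_0$ vanishes and $\psi_R(t_0)=t_0\otimes 1$.

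For the inductive step, assuming $\psi_R(t_n)=t_n\otimes 1+\sum_{j=0}^{n-1}e_{n-j}^{2^j}\otimes\etau_j$, I would first compute $\psi_R(e_{n+1})=Q_0\psi_R(t_n)$ by applying the Cartan formula termwise. Noting that $e_{n-j}^{2^j}\equiv 0$ and $\etau_j\equiv 1\pmod\rho$, each summand falls into the branch where the $\asig$-correction multiplies $Q_1(e_{n-j}^{2^j})=0$, so only $Q_0(e_{n-j}^{2^j})\otimes Q_0(\etau_j)=e_{n-j}^{2^{j+1}}\otimes\exi_{j+1}$ survives; together with $Q_0(t_n\otimes 1)=e_{n+1}\otimes 1$ and a reindex $i=j+1$ this yields $\psi_R(e_{n+1})=\sum_{i=0}^{n}e_{n+1-i}^{2^i}\otimes\exi_i$. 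Then $\psi_R(t_{n+1})=Q_1\psi_R(t_n)+\psi_R(e_{n+1})\cdot(1\otimes\etau_0)$, where $Q_1\psi_R(t_n)$ produces the terms $t_{n+1}\otimes 1$, $e_{n-j}^{2^{j+1}}\otimes\etau_{j+1}$, and $e_{n-j}^{2^{j+1}}\otimes\etau_0\exi_{j+1}$, the $\usig$-corrections again dying on $Q_1$ of the even factors and of $1$.

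The step I expect to require the most care is the final bookkeeping. The mixed terms $e_{n-j}^{2^{j+1}}\otimes\etau_0\exi_{j+1}$ arising from $Q_1(\etau_j)$ must be matched against $\psi_R(e_{n+1})\cdot(1\otimes\etau_0)=\sum_{i=0}^{n}e_{n+1-i}^{2^i}\otimes\etau_0\exi_i$. After reindexing $i=j+1$, these two families of $\etau_0\exi_i$-terms coincide for $i=1,\dots,n$ and cancel in characteristic two, leaving exactly the single surviving term $e_{n+1}\otimes\etau_0$ (the $i=0$ contribution). This is precisely the $j=0$ summand needed to complete $\sum_{j=0}^{n}e_{n+1-j}^{2^j}\otimes\etau_j$, closing the induction. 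Beyond this cancellation, the only genuine subtlety is keeping the degree-mod-$\rho$ classification straight so that the correct branch of the Cartan formula is invoked at each step.
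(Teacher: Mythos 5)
Your inductive step is, line for line, the paper's own proof: the same induction, the same appeal to \cref{cor:CoactQdegreekrhoplusone} for $\psi_R Q_0(t_n)$ and $\psi_R Q_1(t_n)$, the same application of the Cartan formula using $Q_1(e_k)=0$ and the values of $Q_0,Q_1$ on the $\etau_j$ from \cref{DLetauexi}, and the same cancellation in characteristic two of the two families of $\etau_0\exi_{j+1}$-terms, leaving $e_{n+1}\otimes\etau_0$ as the missing $j=0$ summand. All of that is correct.

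The one genuine gap is in your base case. You correctly record that $\cAC_1=\F_2\{\etau_0,\asig\exi_1\}$, but then assert that the only possible term in $\psi_R(t_0)$ besides $t_0\otimes 1$ is $1\otimes\etau_0$; the term $1\otimes\asig\exi_1$ is equally allowed by degree, and neither the counit axiom nor coassociativity excludes it. Your chosen tool cannot close this: the restriction homomorphism kills $\asig$, so $\Res$ detects the coefficient of $1\otimes\etau_0$ but is blind to an $\asig$-divisible term. Compare the paper's own computation of $\psi_R(e_{2k\rho-\sigma})$ for $\Ptwo_\rho(S^{k\rho-\sigma})$, where precisely these two candidate terms appear and the authors need \emph{both} $\Res$ (for the $\etau_0$-coefficient) and the geometric fixed point map $\mFix$ (for the $\asig\exi_1$-coefficient). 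The paper sidesteps the issue for $t_0$ entirely by noting that $t_0$ is the image of the primitive generator under $\bMC S^1\to\bMC\Omega^\rho S^{\rho+1}$, so it is primitive by naturality. Either repair --- naturality from $S^1$, or supplementing your $\Res$ argument with an $\mFix$ argument --- closes the gap; as written, the base case is incomplete.
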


\begin{proof}
    The formula holds for $t_0$
    {since it is the image of the (primitive) generator in $\bMC S^1 \to \bMC \Omega^\rho S^{\rho+1}$.}
    Assume by induction that the formula holds for $t_k.$ Then the coNishida relations,
   as described in \cref{cor:CoactQdegreekrhoplusone},
   imply
    \begin{align*}
        \psi_R(e_{k+1}) & = Q_0 (\psi_R(t_k)) \\
        & = Q_0 \left(t_k \otimes 1 + \sum_{j=0}^{k-1} e_{k-j}^{2^j} \otimes \etau_j \right) \\
        & = e_{k + 1} \otimes 1 + \sum_{j=0}^{k-1} e_{k-j}^{2^{j + 1}} \otimes \exi_{j + 1} \\
        & = \sum_{j=0}^{k} e_{k+1-j}^{2^j} \otimes \exi_{j},
    \end{align*}
    using the Cartan formula and \Cref{hlgyOmRhoSRho1}. Additionally, the coNishida relations (\cref{thm:equivarRightCoactionFormula}) imply
    \begin{align*}
        \psi_R(t_{k + 1}) & = \psi_R(Q_1 (t_k)) \\
        & = Q_1 \left( t_k \otimes 1 + \sum_{j=0}^{k-1} e_{k-j}^{2^j}\otimes\etau_j \right) + ( 1 \otimes \etau_0) Q_0 \left( t_k \otimes 1 + \sum_{j=0}^{k-1} e_{k-j}^{2^j}\otimes\etau_j \right) \\
        & = t_{k + 1} \otimes 1 + \sum_{j=0}^{k-1} \left( Q_0 (e_{k -j}^{2^j}) \otimes Q_1(\etau_j) \right) \\
        & \qquad \quad + e_{k + 1} \otimes \etau_0 + \sum_{j=0}^{k-1} \left(Q_0(e_{k-j}^{2^j}) \otimes \etau_0 Q_0(\etau_j) \right) \\
        & = t_{k + 1} \otimes 1 + e_{k + 1} \otimes \etau_0 + \sum_{j=0}^{k-1} \left( e_{k-j}^{2^{j + 1}} \otimes (\etau_{j + 1} + \etau_0 \exi_{j + 1}) + e_{k-j}^{2^{j + 1}} \otimes \etau_0 \exi_{j + 1} \right) \\
        & = t_{k + 1} \otimes 1 + \sum_{j=0}^{k} e_{k + 1 - j}^{2^{j}} \otimes \etau_k,
    \end{align*}
    where in the third line we have used the Cartan formula and that $Q_1(e_j)=0.$ In the fourth line we also use \cref{DLetauexi}.
\end{proof}

\subsection{The $\mathcal{A}^h_\bigstar$-comodule $\bH^h_\bigstar \Omega^\rho S{^{\rho+1}}$}

We set
\[
    \bH^h_\bigstar\Omega^\rho S^{\rho+1}:=\pi_\bigstar(F(E{C_2}_+,\bH \wedge \OmRhoSRho))
\]

\begin{proposition}\label{prop:borelcomodule}
    There is an isomorphism
    of $\bH_\bigstar^h$-algebras 
    \[
        \bH^h_\bigstar \Omega^\rho S^{\rho+1}=\bMC^h[t_n,e_k\ |n\ge0,k\ge1]^{\,\,\widehat{}}_{\asig}/(t_n^2=\asig t_{n+1}+\usig e_{n+1})
    \]
    where $|e_k|=(2^k-1)\rho$ and $|t_n|=(2^n-1)\rho+1$. The right $\mathcal{A}^h_\bigstar$-coaction is given by
        \begin{align*}
        \psi_R(e_{k}) & = \sum_{j=0}^{k-1} e_{k-j}^{2^j} \otimes \exi_{j}, \\
        \psi_R (t_k) & = t_k \otimes 1 + \sum_{j=0}^{k-1} e_{k-j}^{2^j} \otimes \etau_j. 
    \end{align*}
\end{proposition}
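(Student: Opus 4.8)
The plan is to obtain both the algebra structure and the coaction by applying the Borel completion functor $F(E{C_2}_+,-)$ to the genuine statements already in hand, and tracking its effect on $RO(C_2)$-graded homotopy. The crucial input is the comparison recorded above following \cite[Corollary~6.40 and Theorem~6.41]{HK} (see also \cite[Theorem~2.14]{LSWX}): the genuine-to-Borel map induces on homotopy the operation of inverting $\usig$ and completing at $\asig$, which is exactly how $(\bH_\bigstar,\cAC_\bigstar)$ maps to $(\bH^h_\bigstar,\cAhC_\bigstar)$. I would first apply $F(E{C_2}_+,-)$ to the equivalence $\bH\wedge\OmRhoSRho\simeq\bH\wedge\bH$ of \cref{hlgyOmRhoSRho1}, obtaining $F(E{C_2}_+,\bH\wedge\OmRhoSRho)\simeq F(E{C_2}_+,\bH\wedge\bH)$; the homotopy of the right-hand side is the Borel dual Steenrod algebra $\cAhC_\bigstar$ of Hu--Kriz, and transporting its presentation along $\overline{\exi_k}\mapsto e_k$, $\overline{\etau_n}\mapsto t_n$ yields the claimed presentation. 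Equivalently, one applies $\usig$-inversion and $\asig$-completion directly to $\bMC\Omega^\rho S^{\rho+1}=\bMC[t_n,e_k]/(t_n^2=\asig t_{n+1}+\usig e_{n+1})$: inverting $\usig$ kills the negative cone of $\bMC$ and replaces $\bMC$ by $\bMC^h=\F_2[\asig,\usig^{\pm1}]$, after which $\asig$-completion produces the stated $\bMC^h$-algebra.

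For the coaction I would argue by naturality. The genuine coaction is realized by a map of spectra whose effect on homotopy is computed in \cref{prop:A-comod0rhoSrho+1}, and the comparison map $\bMC\Omega^\rho S^{\rho+1}\to\bH^h_\bigstar\Omega^\rho S^{\rho+1}$ is a map of comodules lying over the ring map $\cAC_\bigstar\to\cAhC_\bigstar$. Since $F(E{C_2}_+,-)$ is lax symmetric monoidal it carries the genuine coaction map to the Borel coaction map, so it suffices to evaluate the formulas of \cref{prop:A-comod0rhoSrho+1} after $\usig$-inversion and $\asig$-completion. The comparison map sends the generators $e_k,t_n$ to the generators of the same name, while the structure constants $\exi_j,\etau_j$ are sent to their images in $\cAhC_\bigstar$ under $\cAC_\bigstar\to\cAhC_\bigstar$. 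As none of $e_k$, $t_n$, $\exi_j$, or $\etau_j$ involves the negative cone, every term survives $\usig$-inversion and $\asig$-completion unchanged, so the two displayed formulas transfer verbatim.

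The main obstacle is the technical justification that, for this spectrum, Borel completion really does implement $\usig$-inversion followed by $\asig$-completion on $RO(C_2)$-graded homotopy, together with the compatibility of this identification with the comodule structure. The first point is inherited from the equivalence $\bH\wedge\OmRhoSRho\simeq\bH\wedge\bH$, for which Hu--Kriz establish exactly this comparison, controlling in particular the $\lim^1$ and convergence issues in the $\asig$-completion over the infinitely many polynomial generators. The second point follows formally from lax monoidality of $F(E{C_2}_+,-)$, which guarantees that the Borel comodule structure is the $\usig$-inverted, $\asig$-completed image of the genuine one rather than some a priori different coaction.
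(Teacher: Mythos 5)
Your proposal is correct and follows essentially the same route as the paper: the paper likewise applies $F(E{C_2}_+,-)$ to the equivalence $\bH\wedge \OmRhoSRho\simeq \bH\wedge\bH$ and then invokes the Hu--Kriz identification of $\cAhC_\bigstar$ and of the map $\cAC_\bigstar\to\cAhC_\bigstar$ as $\usig$-inversion followed by $\asig$-completion. Your additional remarks on naturality of the coaction and on why the formulas of \cref{prop:A-comod0rhoSrho+1} transfer verbatim simply make explicit what the paper leaves implicit.
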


\begin{proof}
    As in the proof of  \cref{hlgyOmRhoSRho1}, we have an equivalence of ring spectra $\bH\wedge \OmRhoSRho \simeq \bH\wedge\bH$ and hence an equivalence of ring spectra 
    \[
    F(E{C_2}_+,\bH \wedge \OmRhoSRho )\simeq F(E{C_2}_+,\bH\wedge\bH).
    \]
    The claim then follows from the computation of $\mathcal{A}^h_\bigstar$ as well as the map  $\cAC_\bigstar\to\mathcal{A}^h_\bigstar$ as shown by Hu--Kriz \cite[Corollary 6.40 and Theorem 6.41]{HK}, see also \cite[Theorem 2.14]{LSWX}.
\end{proof}

\section{The Snaith Splitting}

We will make use of the fact that loop spaces split stably, so that in particular the homology similarly splits. We start with a reminder of the relevant Snaith splitting.

\begin{thm}[{\cite[Chapter VII, Theorem 5.7]{LewisMayStein86}}] 
\label{thm:Snaith}
    Suppose $G$ is a finite group, $X$ is $G$-connected, and $V$ is a $G$-representation with a trivial summand. Then there is an equivalence of $G$-spectra
    \begin{align} \label{eq:Snaith}
        \Sigma^\infty_G \Omega^V \Sigma^V X_+ \simeq \bigvee_{r \geq 0} \Sigma^\infty_G \Pow^r_V X 
    \end{align}
    where $\Pow^r_V X = C_V(r)_+ \wedge_{\Sigma_r} X^{\wedge r}$ and $C_V(r)$ is the {configuration space of $r$ points} in $V.$
\end{thm}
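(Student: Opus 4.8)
The plan is to obtain the splitting as the \emph{stable} splitting of the cardinality filtration on the free little-disks algebra on $X$. First I would invoke the equivariant form of May's approximation theorem: writing $\mathcal{C}_V$ for the little $V$-disks operad and $\mathbf{C}_V X$ for the free $\mathcal{C}_V$-algebra on $X$ (the associated monad construction), the hypotheses that $X$ is $G$-connected and that $V$ contains a trivial summand guarantee that the natural map $\mathbf{C}_V X \to \Omega^V\Sigma^V X$ is a $G$-weak equivalence. Since $X$ is $G$-connected, $\Sigma^V X$ is highly connected and no group completion is needed. This reduces the problem to splitting $\Sigma^\infty_G$ of $\mathbf{C}_V X$ (with its basepoint), the $r=0$ stratum being responsible for the disjoint-basepoint summand $S^0 = \Pow^0_V X$ on the right-hand side.

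Next I would exploit the cardinality filtration $\ast = F_0 \subseteq F_1 \subseteq F_2 \subseteq \cdots$ of $\mathbf{C}_V X$ by number of points. Its subquotients are the extended powers $F_r/F_{r-1} \cong \mathcal{C}_V(r)_+ \wedge_{\Sigma_r} X^{\wedge r}$, which are $G$-equivalent to $\Pow^r_V X = C_V(r)_+ \wedge_{\Sigma_r} X^{\wedge r}$ because the operad spaces $\mathcal{C}_V(r)$ are $(\Sigma_r\times G)$-equivariantly equivalent to the configuration spaces $C_V(r)$. Stabilizing yields cofiber sequences
\[
    \Sigma^\infty_G F_{r-1} \to \Sigma^\infty_G F_r \to \Sigma^\infty_G \Pow^r_V X.
\]
The goal is then to split these compatibly across $r$.

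The heart of the argument is to construct, for each $r$, a stable $G$-map $\sigma_r\colon \Sigma^\infty_G \mathbf{C}_V X \to \Sigma^\infty_G \Pow^r_V X$ that restricts to the projection on $F_r$ and vanishes on $F_{r-1}$, i.e.\ a stable retraction of $F_r \to \Pow^r_V X$. Following the nonequivariant construction of Snaith and Cohen--May--Taylor, such a map is built from the operadic structure: a configuration of $n$ labelled points is sent to the formal sum over its $r$-element sub-configurations, and this formal sum is realized as an honest stable map by a pinch/scanning construction using the little disks. Because every step (deleting points, reindexing, the scanning homotopies) is natural in the $G$-action on $V$ and commutes with the $\Sigma_r$-action, the $\sigma_r$ are $G$-equivariant. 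Assembling them gives a map $\Sigma^\infty_G \mathbf{C}_V X \to \prod_{r} \Sigma^\infty_G \Pow^r_V X$; since $X$ is $G$-connected and $V$ has a trivial summand, the $G$-fixed-point connectivity of $\Pow^r_V X$ grows with $r$, so the product coincides with the wedge in each degree and the assembled map is equivalent to a map into $\bigvee_r \Sigma^\infty_G \Pow^r_V X$. It is an equivalence because, on the associated graded of the filtration, it is by construction the identity.

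The main obstacle is the $G$-equivariant construction of the stable retractions $\sigma_r$ together with the product-equals-wedge comparison. Even nonequivariantly these retractions require a careful geometric argument; equivariantly one must in addition track the $G$-action through the scanning construction and verify that the configuration spaces $C_V(r)$ remain suitably free and cofibrant as $(\Sigma_r\times G)$-spaces. This, along with the approximation theorem and the growth of fixed-point connectivity, is exactly where the hypotheses that $X$ is $G$-connected and that $V$ contains a trivial summand are used: the trivial summand supplies the ``room'' needed to perform the embedding and scanning homotopies $G$-equivariantly. This is precisely the content carried out in \cite[Chapter VII]{LewisMayStein86}.
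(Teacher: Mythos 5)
The paper does not actually prove this statement: it is imported verbatim from \cite[Chapter VII, Theorem 5.7]{LewisMayStein86} and used as a black box, so there is no in-paper argument to compare yours against. That said, your outline is the standard route to the equivariant Snaith splitting and is consistent with how the cited source proceeds: reduce to the free $E_V$-algebra $\mathbf{C}_V X$ via the equivariant approximation theorem, filter by cardinality with subquotients the extended powers $\Pow^r_V X$, construct stable $G$-equivariant retractions onto the filtration quotients in the style of Cohen--May--Taylor, and use growing connectivity to replace the product by the wedge.

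Be aware, however, that as written your argument is a roadmap rather than a proof. The two genuinely substantive inputs --- the equivariant approximation theorem (which itself needs the hypotheses that $X$ is $G$-connected and $V^G\neq 0$, and is a nontrivial theorem of Hauschild/LMS, not a formality) and the $G$-equivariant construction of the stable retractions $\sigma_r$ --- are precisely the points you defer back to \cite[Chapter VII]{LewisMayStein86}. Since the paper treats the entire theorem as a citation, this is a reasonable level of detail, but you have effectively replaced one citation by two. One step that deserves an explicit sentence is the product-versus-wedge comparison: equivariantly one must bound the connectivity of $(\Pow^r_V X)^H$ for every subgroup $H\le G$, not just the underlying space; this works because $G$-connectivity of $X$ forces the relevant smash powers appearing in the fixed points to have connectivity growing roughly like $r/|G|$, but it is not automatic from the nonequivariant statement.
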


\begin{cor}
\label{cor:HlgySnaithSplitting}
    With the same assumptions as \cref{thm:Snaith}, we have a splitting of $\cAC_\bigstar$-comodules
    \[
    \bMC \Omega^V \Sigma^V X \iso \bigoplus_{r\geq 0} \tMC \Pow^r_V X.
    \]
\end{cor}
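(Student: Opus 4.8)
The plan is to deduce the homology-level splitting as a formal consequence of the stable splitting of $G$-spectra in \cref{thm:Snaith}, being careful that the resulting isomorphism respects the $\cAC_\bigstar$-coaction and not merely the underlying $\bMC$-module structure. First I would smash the equivalence \eqref{eq:Snaith} of $C_2$-spectra with the Eilenberg--Mac~Lane spectrum $\bH$: since smashing with a fixed spectrum preserves wedge decompositions, we obtain an equivalence
\[
    \bH \wedge \Sigma^\infty_{C_2} \Omega^V \Sigma^V X_+ \simeq \bigvee_{r \geq 0} \bH \wedge \Sigma^\infty_{C_2} \Pow^r_V X.
\]
Applying $\pi^{C_2}_\bigstar(-)$ and using that $RO(C_2)$-graded homotopy converts wedges into direct sums then yields an isomorphism of $\bMC$-modules
\[
    \bMC \Omega^V \Sigma^V X \iso \bigoplus_{r\geq 0} \tMC \Pow^r_V X,
\]
specializing to the case $V = \rho$ after noting $\Omega^\rho S^{\rho+1} = \Omega^\rho \Sigma^\rho S^1$, where $X = S^1$ is $C_2$-connected and $\rho = 1 \oplus \sigma$ has a trivial summand, so the hypotheses of \cref{thm:Snaith} are met.

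The real content is promoting this to an isomorphism of $\cAC_\bigstar$-comodules. The key point is that the coaction on $\bMC Y$ for any $C_2$-spectrum $Y$ is induced by the unit map $\eta\colon \bH \to \bH \wedge \bH$ smashed with $Y$, giving $\bMC Y = \pi^{C_2}_\bigstar(\bH \wedge Y) \to \pi^{C_2}_\bigstar(\bH \wedge \bH \wedge Y) \iso \bMC Y \otimes_{\bMC} \cAC_\bigstar$, where the final isomorphism uses the Künneth/flatness of $\cAC_\bigstar$ over $\bMC$. Since the Snaith equivalence is an equivalence of $C_2$-spectra, it is natural with respect to smashing with the map $\eta \wedge \id$; concretely, the diagram comparing the coaction on the total loop space to the direct sum of coactions on the summands commutes because both are obtained by applying $\pi^{C_2}_\bigstar((\eta \wedge \id) \wedge (-))$ to the same underlying equivalence of spectra. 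Thus the module isomorphism above automatically intertwines the two coactions.

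I would write this as: the splitting map and its inverse are maps of $C_2$-spectra, hence induce maps of $\bH \wedge \bH$-modules after smashing with $\bH \wedge \bH$, and the coaction is precisely the map classifying the $\bH \wedge \bH$-module structure relative to the $\bH$-module structure; naturality of the Künneth isomorphism $\pi^{C_2}_\bigstar(\bH\wedge\bH\wedge -) \iso \pi^{C_2}_\bigstar(\bH\wedge -) \otimes_{\bMC}\cAC_\bigstar$ then gives compatibility with coactions summand-by-summand. The main obstacle I anticipate is purely bookkeeping: ensuring the Künneth isomorphism genuinely applies, which requires $\cAC_\bigstar$ to be flat (indeed free) as an $\bMC$-module so that $\pi^{C_2}_\bigstar$ commutes with the relevant smash product and the coaction target has the stated tensor form. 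This flatness is exactly the reason $(\bMC, \cAC_\bigstar)$ is a Hopf algebroid as recorded in \cref{sec:C2Steenrod}, so I would invoke that structure rather than reprove it. With flatness in hand, the entire argument is the formal statement that the homology functor $\bMC(-)$ is a symmetric monoidal, coaction-preserving functor that carries wedges to direct sums, and the splitting is simply its value on the Snaith equivalence.
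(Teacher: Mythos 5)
Your proposal is correct and matches the paper's treatment: the paper states \cref{cor:HlgySnaithSplitting} without proof, treating it as the formal consequence of \cref{thm:Snaith} that you spell out, namely smashing the stable splitting with $\bH$, using that $\pi^{C_2}_\bigstar$ carries wedges to direct sums, and invoking naturality of the coaction (induced by $1\wedge\eta\wedge 1$ together with the K\"unneth isomorphism, valid since $\cAC_\bigstar$ is free over $\bMC$) to see that the splitting is one of comodules. Your attention to the flatness hypothesis and to the connectivity/trivial-summand hypotheses of \cref{thm:Snaith} is exactly the right bookkeeping.
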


Classes in the image of the inclusion $\tMC \Pow^r_V X \hookrightarrow \bMC \Omega^V \Sigma^V X$ will be said to have {\bf Snaith weight} equal to $r$.

\begin{prop}\label{prop:snaithweights}
    In $\bMC \Omega^\rho S^{\rho+1}$, the classes $e_n$ and $t_n$ have Snaith weight $2^n$.
\end{prop}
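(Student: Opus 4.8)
The plan is to argue by induction on $n$, exploiting the fact that the Dyer--Lashof operations $Q_0$ and $Q_1$ double the Snaith weight, together with the formulas $Q_0(t_n) = e_{n+1}$ and $Q_1(t_n) = t_{n+1}$ recorded in \cref{hlgyOmRhoSRho1}.

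First I would establish the base case: the class $t_0$ has Snaith weight $1 = 2^0$. Recall that $t_0$ is the image of the fundamental class under the unit map $S^1 \to \Omega^\rho S^{\rho+1}$, adjoint to the identity of $S^{\rho+1} = \Sigma^\rho S^1$. Under the Snaith splitting of \cref{cor:HlgySnaithSplitting}, this unit map is precisely the inclusion of the weight-one summand $\tMC \Pow^1_\rho S^1 = \tMC S^1 \into \bMC \Omega^\rho S^{\rho+1}$, so $t_0$ has Snaith weight $1$.

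The heart of the argument is the weight-doubling statement: if $x \in \bMC \Omega^\rho S^{\rho+1}$ has Snaith weight $r$, then both $Q_0(x)$ and $Q_1(x)$ have Snaith weight $2r$. To prove this I would trace through the construction of the operations in \cref{defn:DLErho}, which produces $Q_0(x)$ and $Q_1(x)$ as images of the classes $f_{2k\rho}, f_{2k\rho+\sigma} \in \tMC \Ptwo_\rho S^{k\rho}$ under a composite built from the extended square $\Ptwo_\rho = \Pow^2_\rho$ and the $E_\rho$-multiplication $\alpha$. The essential point is that the Snaith filtration is the word-length filtration of the free $E_\rho$-algebra, and that the operadic structure map restricts on the extended square of the weight-$r$ summand to a map $\Pow^2_\rho(\Pow^r_\rho S^1) \to \Pow^{2r}_\rho S^1$ coming from the operad composition $C_\rho(2) \times C_\rho(r)^{\times 2} \to C_\rho(2r)$. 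Hence the Dyer--Lashof construction carries a weight-$r$ class into the weight-$2r$ summand.

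Granting these two ingredients, the proposition follows by induction. Assuming $t_n$ has weight $2^n$, the weight-doubling statement together with \cref{hlgyOmRhoSRho1} shows that $e_{n+1} = Q_0(t_n)$ and $t_{n+1} = Q_1(t_n)$ both have weight $2 \cdot 2^n = 2^{n+1}$; since $e_1 = Q_0(t_0)$ already has weight $2$, this accounts for every $e_k$ with $k \geq 1$ and every $t_n$ with $n \geq 0$. I expect the main obstacle to be the weight-doubling statement, where one must verify that the extended-power construction defining the equivariant Dyer--Lashof operations is genuinely compatible with the Snaith weight filtration. This is the equivariant counterpart of the standard nonequivariant fact that the Dyer--Lashof operations in $\rH_* \Omega^2 S^3$ send a class of Snaith weight $r$ to one of weight $2r$.
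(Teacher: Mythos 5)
Your proposal is correct and follows essentially the same route as the paper: induction starting from the observation that $t_0$ generates the weight-one summand $\tMC \Pow^1_\rho S^1 \iso \tMC S^1$, combined with the fact that the Dyer--Lashof operations double Snaith weight by construction and the formulas $Q_0(t_n)=e_{n+1}$, $Q_1(t_n)=t_{n+1}$ from \cref{hlgyOmRhoSRho1}. Your elaboration of the weight-doubling step via the operad composition is a correct unpacking of what the paper summarizes as ``by construction.''
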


\begin{pf}
    We argue by induction.
    The class $t_0$ is the generator of the summand $\tMC \Pow^1_\rho S^1 \iso \tMC S^1$, which establishes the base case. By construction, the Dyer-Lashof operations will double the Snaith weight. The induction step then follows from the formulas $Q_0(t_n) = e_{n+1}$ and $Q_1(t_n) = t_{n+1}$
    given in \cref{hlgyOmRhoSRho1}.
\end{pf}

The following consequence of the Snaith splitting will allow us to work in the Borel setting for our computations.

\begin{proposition}\label{prop:borelcomplete}
    $BP_\R \wedge \OmRhoSRho$ is a Borel-complete $C_2$-spectrum.
\end{proposition}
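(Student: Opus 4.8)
The plan is to check Borel completeness through the isotropy-separation square. Writing $\bX = BP_\R\wedge\OmRhoSRho$, the completion map $\bX\to F(E{C_2}_+,\bX)$ has fiber $F(\tilde{E}C_2,\bX)$, and $\bX$ is Borel complete exactly when this fiber vanishes; since $\tilde{E}C_2$ is concentrated over $C_2$, this is equivalent to asking that the comparison map $\Phi^{C_2}\bX\to \bX^{tC_2}$ from geometric fixed points to the Tate construction of the underlying spectrum be an equivalence. I would verify this criterion directly.

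The first step is to identify $\Phi^{C_2}\bX$. As $\Phi^{C_2}$ is symmetric monoidal and sends suspension spectra to suspension spectra of fixed-point spaces, $\Phi^{C_2}\bX \simeq \Phi^{C_2}BP_\R \wedge \Sigma^\infty_+(\Omega^\rho S^{\rho+1})^{C_2}$. To understand the fixed-point space I would use $\rho \iso 1\oplus\sigma$ together with the cofiber sequence $(S^\sigma)^{C_2}=S^0\to S^\sigma\to S^\sigma/S^0\simeq {C_2}_+\wedge S^1$. Mapping this into $S^{\rho+1}=S^{2+\sigma}$ gives a fibration $\Omega S^3\to (\Omega^\sigma S^{2+\sigma})^{C_2}\to S^2$, and applying the remaining (trivial) loop coordinate yields a fibration $\Omega^2 S^3 \to (\Omega^\rho S^{\rho+1})^{C_2}\to \Omega S^2$. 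This pins down the fixed-point space, and hence $\Phi^{C_2}\bX$, in terms of the classical double loop space underlying Ravenel's computation.

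The crux is the comparison with $\bX^{tC_2}\iso(BP\wedge\Sigma^\infty_+\Omega^2 S^3)^{tC_2}$, formed using the conjugation action. I would organize both sides by the Snaith splitting $\OmRhoSRho\simeq\bigvee_{r\ge0}\Sigma^\infty\mathrm{Sym}^r_\rho S^1$ of \cref{thm:Snaith}: because the summands $\mathrm{Sym}^r_\rho S^1 = C_\rho(r)_+\wedge_{\Sigma_r}S^r$ increase in connectivity, $\Phi^{C_2}$ splits as a wedge that is finite in each degree. The essential geometric input is that a conjugation-fixed configuration of $r$ points in $\rho=\C$ consists of points on the real axis together with conjugate pairs off it, so the off-axis locus of $C_\rho(r)$ is $C_2$-free; after smashing with $BP_\R$ these free contributions have trivial Tate, while the real-axis (fixed) part should match the geometric fixed points. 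The main obstacle is precisely that a priori the Tate construction is unbounded below whereas $\Phi^{C_2}\bX$ is connective, so the real work is to show the negative-degree part of the Tate spectral sequence is annihilated; I expect to handle this by leveraging the Real orientation of $BP_\R$ (as an $MU_\R$-algebra) to reduce the surviving, fixed contributions to the corresponding nonequivariant statements for $\Omega^2 S^3$.
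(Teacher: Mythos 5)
Your reduction of Borel-completeness to the statement that $\Phi^{C_2}\bX\to \bX^{tC_2}$ is an equivalence (via the Tate square) is a valid criterion, and your identification of the fixed-point space $(\Omega^\rho S^{\rho+1})^{C_2}$ via the fibration over $\Omega S^2$ with fiber $\Omega^2 S^3$ is correct. The genuine gap is that the crux of the argument --- showing that the ``real-axis'' contributions to the Tate construction actually agree with the geometric fixed points, i.e.\ killing the negative-degree part of the Tate spectral sequence --- is never carried out; it is deferred to an unspecified appeal to the Real orientation of $BP_\R$. That deferred step \emph{is} the proposition (applied summand by summand), so as written the proof is circular in spirit: the statement that the fixed stratum of each configuration space contributes its geometric fixed points to the Tate construction is precisely the Borel-completeness of $BP_\R$ smashed with a finite $C_2$-spectrum. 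Moreover, the proposed stratification of $C_\rho(r)$ into a free off-axis locus and a real-axis locus would need to be promoted to an actual isotropy-separation filtration of the Snaith summands before one could run such an induction, which is additional unaddressed work.

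The efficient route, which is the one the paper takes, is to quote that $BP_\R$ itself is already Borel-complete (Hu--Kriz), observe that Borel-completeness is preserved by smashing with a finite $C_2$-spectrum and by arbitrary products, and then use exactly the connectivity observation you made about the Snaith summands $\Pow^r_\rho S^1$ to identify the infinite wedge $\bigvee_r BP_\R\wedge\Pow^r_\rho S^1$ with the corresponding product. You correctly isolated the one genuinely computational input (strictly increasing connectivity of the Snaith summands, which follows from \cref{hlgyOmRhoSRho1} and \cref{prop:snaithweights}), but you then routed the rest of the argument through a much harder and incomplete geometric analysis instead of these three formal facts. I would recommend restructuring: keep the Snaith splitting and connectivity step, and replace the Tate-square analysis with the closure properties of Borel-complete spectra together with the known completeness of $BP_\R$.
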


\begin{proof}
    Letting $\Pow^r_\rho S^1$ denote the $r$-th Snaith summand of $\OmRhoSRho$, we claim that the connectivity of $\Pow^r_\rho S^1$ is strictly increasing in $r$. 
    Since $\Pow^r_\rho S^1$ is finite and we are working $2$-locally, it suffices to establish this claim in $\bMC$-homology. This claim now follows by combining \cref{hlgyOmRhoSRho1}
     with \cref{prop:snaithweights}. Indeed the former implies that $\bH\wedge \OmRhoSRho$ splits as a sum of free $\bH$ modules indexed by monomials in the $t_i's$ and $e_j$'s, and the latter implies that $\bH\wedge \Pow^r_\rho S^1$ 
     corresponds to monomials
     of weight $r$.
     As $e_n$ and $t_n$ have weight $2^n$, one sees 
     by examining the degrees of $e_n$ and $t_n$
     that the connectivity of these summands strictly increases in $r$.

     This implies that, since $BP_\R$ is connective, the connectivity of $BP_\R\wedge \Pow^r_\rho S^1$ is also strictly increasing in $r$. It follows that the map
        \[\bigvee\limits_{r \geq 0} BP_\R\wedge \Pow^r_\rho S^1 \to \prod\limits_{r \geq 0} BP_\R\wedge \Pow^r_\rho S^1\]
    is an equivalence. Since $BP_\R$ is Borel-complete, so is $BP_\R\wedge \Pow^r_\rho S^1$, since $\Pow^r_\rho S^1$ is finite. Further, since a product of Borel-complete $C_2$-spectra is Borel-complete, the $C_2$-spectrum
    \[
        BP_\R \wedge \OmRhoSRho \simeq \bigvee\limits_{r\geq 0} BP_\R\wedge \Pow^r_\rho S^1
    \]
    is Borel-complete.
\end{proof}

\section{The $E_2$-term of the {Borel} Adams spectral sequence}
\label{sec:E2Term}

Our computation in  \cref{prop:A-comod0rhoSrho+1} of the $\cAC$-comodule structure on $\bH_\bigstar\Omega^\rho S^{\rho+1}$ is the input needed to compute the equivariant Adams spectral sequence converging to ${BP_\R}_\bigstar \Omega^\rho S^{\rho+1}$. However, since  ${BP_\R}\wedge \OmRhoSRho$ is Borel complete by  \cref{prop:borelcomplete}, we may instead use the Borel equivariant Adams spectral sequence developed by Greenlees \cite{G}. It is possible to carry out the computation with the genuine equivariant Adams spectral sequence, but the Borel approach is more concise as it allows us to avoid the negative cone computation. The Borel equivariant Adams spectral sequence we will use has signature
\[
E_2=\Ext_{\mathcal{A}^h_\bigstar}(\bH_\bigstar^h,\bH_\bigstar^h(BP_\R\wedge \OmRhoSRho ))\implies \pi_\bigstar(F(E{C_2}_+,BP_\R \wedge \OmRhoSRho))^{\widehat{}}_2
\]
and converges by \cite[Theorem 2.7]{G}. By our  \cref{prop:borelcomplete}, the target is isomorphic to $\pi_\bigstar(BP_\R\wedge \OmRhoSRho)^{\widehat{}}_2$. 
As in \cite[Section 2.2]{LSWX}, we may use a change of rings isomorphism to get a spectral sequence of signature
\begin{equation}\label{eq:ASS}
  E_2=\Ext_{\mathcal{E}^h_\bigstar}(\bH_\bigstar^h,\bH_\bigstar^h \Omega^{\rho}S^{\rho+1} )\implies \pi_\bigstar(BP_\R \wedge \OmRhoSRho)^{\widehat{}}_2,  
\end{equation}
where $(\bH_\bigstar^h,\mathcal{E}^h_\bigstar)$ is the quotient Hopf algebroid of $(\bH_\bigstar^h,\mathcal{A}^h_\bigstar)$ given by
\[
    \mathcal{E}^h_\bigstar=\bH_\bigstar^h[\etau_i]/(\etau_i^2=\asig\etau_{i+1}).
\]
We refer the reader to \cite[Proposition 2.15]{LSWX} for more details on $\mathcal{E}^h_\bigstar$. We will refer to the spectral sequence of \eqref{eq:ASS} as the \emph{Borel Adams spectral sequence of }$BP_\R \wedge \OmRhoSRho$.

\subsection{The $E_1$-page of the $\asig$-Bockstein spectral sequence} 

Following \cite[Section 4.2]{LSWX}, we can compute our Adams $E_2$-page 
\[
    \Ext_{\mathcal{E}^h_\bigstar}(\bH_\bigstar^h,\bH_\bigstar^h\Omega^\rho S^{\rho+1})
\]
via a Ravenel--May spectral sequence obtained from an $\asig$-adic filtration.
More specifically,  the $\asig$-adic filtration
defines a decreasing filtration on the Hopf algebroid $(\bH_\bigstar^h,\mathcal{E}^h_\bigstar)$ in the sense of \cite[Definition A1.3.5]{Ravenel86}.
The $\asig$-adic filtration on $\bH_\bigstar^h\Omega^\rho S^{\rho+1}$ 
similarly defines a decreasing filtration as a $(\bH_\bigstar^h,\mathcal{E}^h_\bigstar)$-comodule, in the sense of \cite[Definition A1.3.7]{Ravenel86}. The resulting spectral sequence, which we will refer to as the \emph{$\asig$-Bockstein spectral sequence}, has signature
\begin{equation}\label{eq:rhoBSS}
E_1-\asig\mathrm{BSS}=\Ext_{(\mathrm{gr}\bH_\bigstar^h,\mathrm{gr}\mathcal{E}^h_\bigstar)}(\mathrm{gr}\bH_\bigstar^h,\mathrm{gr}\bH_\bigstar^h\Omega^\rho S^{\rho+1})\implies 
\Ext_{\mathcal{E}^h_\bigstar}(\bH_\bigstar^h,\bH_\bigstar^h\Omega^\rho S^{\rho+1})
\end{equation}
by \cite[Theorem A1.3.9]{Ravenel86}.

\begin{prop}\label{prop:E1identifications}
    There is an isomorphism of graded rings
    \[
        E_1-\asig\mathrm{BSS}\cong \F_2[\usig^\pm,\asig,v_0,v_1,\ldots,t_0,e_1,e_2,\ldots]/(t_0^2+\usig e_1,r_k | k\ge1)
    \]
    where 
    \[
        r_k=\sum\limits_{j=0}^{k-1}e_{k-j}^{2^j}v_j
    \]
    with Adams bidegrees $|\usig|=(1-\sigma,0)$, $|\asig|=(-\sigma,0)$, $|v_i|=((2^i-1)\rho,1)$, $|t_0|=(1,0)$, and $|e_i|=((2^i-1)\rho,0)$.
\end{prop}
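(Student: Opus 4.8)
The plan is to evaluate the $E_1$-page directly from its definition, reducing it to the cohomology of an explicit Koszul complex and then computing that cohomology inductively. The first step is to identify the two associated graded objects for the $\asig$-adic filtration. In $\mathcal{E}^h_\bigstar = \bH_\bigstar^h[\etau_i]/(\etau_i^2 = \asig\etau_{i+1})$ the leading term of the defining relation is $\etau_i^2 = 0$, and the coproduct formula shows (after projecting the $\exi_i$ to zero under $\mathcal{A}^h_\bigstar \to \mathcal{E}^h_\bigstar$) that the $\etau_i$ are primitive; hence $\mathrm{gr}\,\mathcal{E}^h_\bigstar = \F_2[\usig^{\pm},\asig][\etau_i]/(\etau_i^2)$ is a primitively generated exterior Hopf algebra over $\mathrm{gr}\,\bH_\bigstar^h = \F_2[\usig^\pm,\asig]$. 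Similarly, the leading term of $t_n^2 = \asig t_{n+1} + \usig e_{n+1}$ is $t_n^2 = \usig e_{n+1}$, so
\[
\mathrm{gr}\,\bH_\bigstar^h\Omega^\rho S^{\rho+1} = \F_2[\usig^\pm,\asig][t_n,e_k]/(t_n^2 = \usig e_{n+1}),
\]
and projecting the coaction of \cref{prop:borelcomodule} along $\exi_i \mapsto 0$ leaves the $e_k$ comodule-primitive while $\psi_R(t_k) = t_k \otimes 1 + \sum_{j=0}^{k-1} e_{k-j}^{2^j}\otimes \etau_j$.

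Since $\mathrm{gr}\,\mathcal{E}^h_\bigstar$ is a primitively generated exterior Hopf algebra, its cohomology on any comodule $M$ is computed by the Koszul complex $(M[v_0,v_1,\dots],d)$, where $v_i$ is the polynomial class dual to $\etau_i$ and $d$ is the $\F_2[\usig^\pm,\asig][v_i]$-linear derivation sending $m$ to $\sum_i \partial_i(m)\, v_i$, with $\partial_i(m)$ the coefficient of $\etau_i$ in the reduced coaction. Reading off the coaction above gives $\partial_i(t_k) = e_{k-i}^{2^i}$ and $\partial_i(e_k) = \partial_i(t_0) = 0$, so
\[
d(t_k) = \sum_{j=0}^{k-1} e_{k-j}^{2^j} v_j = r_k, \qquad d(t_0)=0, \qquad d(e_k)=0 .
\]
Thus the $E_1$-page is the cohomology of $\F_2[\usig^\pm,\asig,t_0,t_1,\dots,v_0,v_1,\dots]$ with this differential.

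I would then compute this cohomology by adjoining the $t_k$ one at a time. Because $d(t_k)=r_k$ involves only $e_1,\dots,e_k$ and $v_0,\dots,v_{k-1}$, there is a triangular structure: writing $A_n = \F_2[\usig^\pm,\asig,t_0,\dots,t_n,v_0,v_1,\dots]$, the passage $A_{n-1}\to A_n = A_{n-1}[t_n]$ adjoins the new cycle $e_{n+1} = \usig^{-1}t_n^2$ and imposes $d(t_n)=r_n$. The standard long exact sequence for such a polynomial extension (in which $t_n^2$ is a cycle) gives $H^*(A_n) = H^*(A_{n-1})[e_{n+1}]/(r_n)$ precisely when $r_n$ is a non-zero-divisor on $H^*(A_{n-1})[e_{n+1}]$. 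Iterating from $H^*(A_0) = \F_2[\usig^\pm,\asig,t_0,v_0,v_1,\dots]$ (with $e_1 = \usig^{-1}t_0^2$) then yields exactly the claimed ring
\[
\F_2[\usig^\pm,\asig,v_0,v_1,\dots,t_0,e_1,e_2,\dots]/(t_0^2 + \usig e_1, r_1,r_2,\dots),
\]
the relation $t_0^2+\usig e_1$ recording the definition $e_1 = \usig^{-1}t_0^2$.

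The main obstacle is the non-zero-divisor hypothesis, i.e.\ that $\{r_k\}$ forms a regular sequence. The key point is that $r_k = e_k v_0 + \sum_{j=1}^{k-1} e_{k-j}^{2^j}v_j$ involves the generator $e_k$, which occurs in none of $r_1,\dots,r_{k-1}$, and it occurs linearly with coefficient $v_0$. Writing $r_k = v_0 e_k + w_k$ with $w_k$ free of $e_k$, McCoy's criterion reduces the claim to $\Ann(v_0)\cap\Ann(w_k) = 0$ in $H^*(A_{k-1})[e_{k+1}]$, which I would check by analyzing the associated primes of $(r_1,\dots,r_{k-1})$ (each generator being a sum of terms divisible by some $v_j$ and a power of $t_0$). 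The bidegrees then follow by bookkeeping: each $v_i$ is detected in Adams filtration $1$ by the length-one cobar class on $\etau_i$, and since $|\etau_i| = (2^i-1)\rho+1$ this sits in stem $(2^i-1)\rho$, i.e.\ bidegree $((2^i-1)\rho,1)$, while $t_0$ and the $e_i$ lie in filtration $0$ in their evident internal degrees.
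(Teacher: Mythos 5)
Your reduction is essentially the same as the paper's, though packaged differently. You model $\Ext$ over the associated graded exterior Hopf algebroid by the Koszul complex $\mathrm{gr}\,\bH^h_\bigstar\Omega^\rho S^{\rho+1}[v_0,v_1,\dots]$ with $d(t_k)=r_k$, and compute its cohomology by adjoining the $t_k$ one at a time via the mapping-cone long exact sequence. The paper instead puts an increasing filtration on the comodule algebra ($e_i$ in filtration $0$, $t_j$ in filtration $1$, following Ravenel's treatment of $BP_*\Omega^2S^3$); the resulting $E_1$-page with its $d_1$ is literally your Koszul complex, but the paper must then additionally argue that the filtration spectral sequence collapses at $E_2$ (all remaining generators lift to cycles in $\Ext^0$ or come from $\Ext_{(\mathrm{gr}\bH^h_\bigstar,\mathrm{gr}\cEhC_\bigstar)}(\mathrm{gr}\bH^h_\bigstar,\mathrm{gr}\bH^h_\bigstar)$) and that there are no multiplicative extensions. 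Your exact Koszul model avoids that collapse-and-extensions discussion, which is a modest simplification; both routes funnel into the identical algebraic statement that $r_1,r_2,\dots$ is a regular sequence. Your degree bookkeeping and the identification of the associated graded coaction (killing the $\exi_i$, leaving the $e_k$ primitive and $d(t_k)=r_k$) match the paper.

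The one genuine gap is the regularity of $\{r_k\}$, which is the crux and which you only sketch. Your McCoy reduction is set up correctly: since $e_k$ occurs in none of $r_1,\dots,r_{k-1}$ and occurs linearly in $r_k=v_0e_k+w_k$, it suffices to show $\Ann(v_0)\cap\Ann(w_k)=0$ in the preceding quotient. But this is not a formality --- $\Ann(v_0)$ is nonzero (it contains $e_1$ because $r_1=v_0e_1$, and more in later stages), so one genuinely has to understand the ring $\F_2[\usig^\pm,\asig,v_\bullet,t_0,e_\bullet]/(t_0^2+\usig e_1,r_1,\dots,r_{k-1})$ well enough to rule out a common annihilator, and ``analyzing the associated primes'' is exactly where all the work is hidden. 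The paper imports this regularity from \cite[Theorem 3.3(b)]{Ravenel93}; the underlying technique (showing suitable quotients are integral domains by a localization induction, after Ravenel--Wilson) is reproduced in adapted form as \cref{lemma:integraldomain} in \cref{ProofLemma99}, and that is the argument you would need to supply or cite to close your proof.
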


\begin{proof}
The Hopf algebroid structure formulas for $(\bH_\bigstar^h,\mathcal{E}^h_\bigstar)$ given in \cite[Proposition 2.15]{LSWX} imply that 
\[
    (\mathrm{gr}\bH_\bigstar^h,\mathrm{gr}\mathcal{E}^h_\bigstar)\cong\F_2[\usig^\pm,\asig]\otimes(\F_2,E(\etau_1,\etau_2,\dots))
\]
with $\etau_i$'s primitive. 
Since we are filtering with respect to $\asig$,   \cref{prop:borelcomodule} implies that the $\mathrm{gr}\mathcal{E}^h_\bigstar$-comodule 
$\mathrm{gr}\bH_\bigstar^h\Omega^\rho S^{\rho+1}$ is given by
\[
    \F_2[\usig^\pm,\asig,t_0,t_1,\ldots,e_1,e_2,\ldots
    ]/(t_n^2=\usig e_{n+1})
\]
with $u_\sigma^\pm,\asig$, and the $e_n$'s primitive, and
\[
    \psi_R(t_k)=t_k\otimes 1+\sum\limits_{i=1}^{k-1}e_{k-j}^{2^j}\otimes\etau_j.
\] 
To calculate the $E_1$ page of the $\asig$-Bockstein spectral sequence, we closely follow the argument of \cite[Theorem 3.3(b)]{Ravenel93}. We give the $\mathrm{gr}\mathcal{E}^h_\bigstar$-comodule algebra $\mathrm{gr}\bH_\bigstar^h\Omega^\rho S^{\rho+1}$ an increasing filtration by giving each $e_i$ filtration zero and each $t_j$ filtration one. This results in a spectral sequence converging to the group $\Ext_{(\mathrm{gr}\bH_\bigstar^h,\mathrm{gr}\mathcal{E}^h_\bigstar)}(\mathrm{gr}\bH_\bigstar^h,\mathrm{gr}\bH_\bigstar^h\Omega^\rho S^{\rho+1})$ with $E_1$-page given by
\begin{align*}
  &\Ext_{(\mathrm{gr}\bH_\bigstar^h,\mathrm{gr}\mathcal{E}^h_\bigstar)}(\mathrm{gr}\bH_\bigstar^h,\mathrm{gr}\bH_\bigstar^h)
    \otimes_{\bH_\bigstar}\mathrm{gr}\bH_\bigstar^h\Omega^\rho S^{\rho+1}\\
    & \qquad \cong\F_2[\usig^\pm,\asig,v_i,t_j,e_k|i,j\ge0,k\ge 1]/(t_j^2=\usig e_{j+1}).   
\end{align*}
 The coaction formulas $\psi_R(t_j)$ give $d_1$ differentials $d_1(t_j)=r_j$, where $r_0=0$.
    
As in \cite[Theorem 3.3(b)]{Ravenel93}, the $r_i$ form a regular sequence in the $E_1$-page of this filtration spectral sequence,
resulting in an isomorphism
\[
    E_2\cong \F_2[\usig^\pm,\asig,v_0,v_1,\ldots,t_0,e_1,e_2,\ldots]/(t_0^2+\usig e_1,r_i | i\ge1)
\]
for the $E_2$-page of the filtration spectral sequence. Now, each of the generators $\usig$, $\usig^{-1}$, $\asig$, and $v_i$ is a permanent cycle in this filtration spectral sequence since these generators come from $\Ext_{(\mathrm{gr}\bH_\bigstar^h,\mathrm{gr}\mathcal{E}^h_\bigstar)}(\mathrm{gr}\bH_\bigstar^h,\mathrm{gr}\bH_\bigstar^h)$ and therefore admit lifts to $\Ext_{(\mathrm{gr}\bH_\bigstar^h,\mathrm{gr}\mathcal{E}^h_\bigstar)}(\mathrm{gr}\bH_\bigstar^h,\mathrm{gr}\bH_\bigstar^h\Omega^\rho S^{\rho+1})$. 
The generators $t_0$ and $e_i$ are permanent cycles because they are represented by $\mathrm{gr}\mathcal{E}^h_\bigstar$-comodule primitives in $\mathrm{gr}\bH_\bigstar^h\Omega^\rho S^{\rho+1}$ and therefore admit lifts to $\Ext^0_{(\mathrm{gr}\bH_\bigstar^h,\mathrm{gr}\mathcal{E}^h_\bigstar)}(\mathrm{gr}\bH_\bigstar^h,\mathrm{gr}\bH_\bigstar^h\Omega^\rho S^{\rho+1})$. It follows that the filtration spectral sequence collapses on $E_2$. 

Finally, the relations $t_0^2+\usig e_1=0$ and $r_i=0$ hold in the ring 
\[\Ext_{(\mathrm{gr}\bH_\bigstar^h,\mathrm{gr}\mathcal{E}^h_\bigstar)}(\mathrm{gr}\bH_\bigstar^h,\mathrm{gr}\bH_\bigstar^h\Omega^\rho S^{\rho+1})\]
by way of the relation $t_0^2+\usig e_1=0$ in $\mathrm{gr}\bH_\bigstar^h\Omega^\rho S^{\rho+1}$ and the coaction formulas $\psi_R(t_i)$, respectively. Indeed, the former relation gives a relation between primitives in $\Ext^0$ and the latter coaction formulas define differentials in the cobar complex computing $\Ext_{(\mathrm{gr}\bH_\bigstar^h,\mathrm{gr}\mathcal{E}^h_\bigstar)}(\mathrm{gr}\bH_\bigstar^h,\mathrm{gr}\bH_\bigstar^h\Omega^\rho S^{\rho+1})$.
Therefore, there are no nontrivial extension problems in the filtration spectral sequence, and the 
proposition follows.
\end{proof}

\begin{rmk}
\label{rmk:E(n)nonregular}
    As discussed in the proof above, the elements $r_i$ form a regular sequence, which is a crucial fact in the calculation of the corresponding Ext group. The regularity breaks down, however, if one works over a subalgebra $\cEC(n)$ rather than $\cEC$, leading to a more complicated answer.
\end{rmk}

\subsection{The Borel Adams spectral sequence for $BP_\R\wedge \OmRhoSRho$}  \cref{prop:E1identifications} gives us an explicit description of the $E_1$-page of the $\asig$-Bockstein SS of \eqref{eq:rhoBSS}. We will show that, up to use of the Leibniz rule, the only classes that support nontrivial differentials in the 
$\asig$-Bockstein SS are the powers of $\usig$. There is therefore at most one nontrivial differential on each page, up to use of the Leibniz rule, which will allow us to use the following lemma to compute the homology with respect to these differentials.

\begin{lem}\label{lem:kernelgenerators}
    Let $R$ be a cdga over $\F_2$ such that a set $A=\{x_\alpha\}\subset R$ generates $R$ as an algebra, and suppose that all but one $x_\alpha$ is a cycle, i.e.
    \[
    d(x_\alpha)=\begin{cases}y\neq0&\alpha=1\\
    0&else
    \end{cases}
    \]
    If $B=\{a_\beta\}\subset Z(R)$ is a set of generators of the annihilator ideal of $y$, then 
\[(A\setminus\{x_1\})\cup\{x_1^2\}\cup\{a_\beta,a_\beta\cdot x_1\}_{\beta\in B}\]
    is a set of algebra generators for $H_*(R)$.
\end{lem}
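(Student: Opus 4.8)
The plan is to prove something slightly stronger than the stated conclusion: that the listed elements generate the entire ring of cycles $Z(R)=\ker d$ as an $\F_2$-algebra. The statement about $H_*(R)=Z(R)/\operatorname{im} d$ then follows immediately, since the class of a polynomial in cycles is the corresponding polynomial in their homology classes, so a generating set for the cycles maps onto a generating set for the homology.

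First I would fix notation. Let $S\subseteq R$ be the subalgebra generated by $A\setminus\{x_1\}$; since $d$ is a derivation vanishing on all of these generators, $d|_S=0$. Let $R_{ev}\subseteq R$ be the subalgebra generated by $S\cup\{x_1^2\}$, which is precisely the span of the monomials $s\,x_1^{2n}$ with $s\in S$; as these involve only even powers of $x_1$, the Leibniz computation $d(x_1^n)=n\,x_1^{n-1}y$ (which vanishes for $n$ even and equals $x_1^{n-1}y$ for $n$ odd, over $\F_2$) shows $R_{ev}\subseteq\ker d$. Next I would check that each proposed generator is a cycle: this is immediate for $x_\alpha$ ($\alpha\neq1$) and for $x_1^2$, and for $a_\beta x_1$ it follows from $d(a_\beta x_1)=d(a_\beta)x_1+a_\beta y=a_\beta y=0$, using both $a_\beta\in Z(R)$ and $a_\beta\in\Ann(y)$.

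For the main step, I take an arbitrary cycle $z\in R$. Using that $A$ generates $R$ and that $R$ is commutative, I write $z$ as a finite sum of monomials $s\,x_1^{n}$ and collect powers of $x_1$, obtaining $z=z_{ev}+x_1 t$ with $z_{ev},t\in R_{ev}$; here $z_{ev}$ is already a polynomial in $\{x_\alpha\}_{\alpha\neq1}\cup\{x_1^2\}$. Applying $d$ and using $d|_{R_{ev}}=0$ gives $d(z)=d(x_1 t)=yt$, so the cycle condition forces $yt=0$, i.e.\ $t\in\Ann(y)$. Since $B$ generates $\Ann(y)$ as an ideal, I may write $t=\sum_\beta r_\beta a_\beta$ with $r_\beta\in R$, and, collecting even and odd powers of $x_1$ once more, decompose each coefficient as $r_\beta=p_\beta+x_1 w_\beta$ with $p_\beta,w_\beta\in R_{ev}$. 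Substituting yields
\[
    x_1 t=\sum_\beta p_\beta\,(a_\beta x_1)+\sum_\beta w_\beta\,a_\beta\,x_1^2,
\]
which exhibits $x_1 t$, and hence $z=z_{ev}+x_1t$, as a polynomial in the listed generators.

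The step I expect to require the most care is exactly this reabsorption. Because $R$ is only assumed to be generated by $A$ (there may be relations, and the splittings into even and odd $x_1$-parts need not be unique), I must argue using only the \emph{existence} of monomial representations and never their uniqueness. The structural point that makes it work is that the stray factor of $x_1$ coming from the odd part can always be paired off — either with an annihilator generator $a_\beta$ to form the cycle $a_\beta x_1$, or with a further $x_1$ to form $x_1^2$ — so that no bare, non-cycle $x_1$ ever survives. Once every cycle has been written as a polynomial in the listed (cycle) generators, passing to the quotient $H_*(R)$ completes the argument.
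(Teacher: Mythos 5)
Your proof is correct and follows essentially the same route as the paper's: reduce to generating the subalgebra of cycles, split a cycle into its even-$x_1$ part (automatically a polynomial in $(A\setminus\{x_1\})\cup\{x_1^2\}$) plus $x_1$ times an element that the cycle condition forces into $\Ann(y)$, and then reabsorb the stray $x_1$ into either $a_\beta x_1$ or $x_1^2$ according to the parity of the $x_1$-exponent in the coefficients. The only difference is presentational: you replace the paper's induction on the number of monomials with the single direct decomposition $z=z_{ev}+x_1t$, a mild streamlining of the same idea.
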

\begin{proof}
    It suffices to prove the claim for $\ker(d)$ since there is a surjection of algebras $\ker(d)\to H_*R$. We claim first that it suffices to show that if $p\in\ker(d)$, then $p$ is in the subalgebra generated by \[(A\setminus\{x_1\})\cup\{x_1^2\}\cup x_1\cdot\mathrm{ann}(y)\]
    Indeed, any element in $\mathrm{ann}(y)$ can be written as a sum of monomials of the form $r(x_\alpha)a_\beta$, where $r(x_\alpha)$ is a monomial in the $x_\alpha$'s. If the exponent of $x_1$ in $r(x_\alpha)$ is even, then $r(x_\alpha)a_\beta\cdot x_1$ is in the subalgebra generated by
\[(A\setminus\{x_1\})\cup\{x_1^2\}\cup\{a_\beta\cdot x_1\}_{\beta\in B}\]
    If the exponent of $x_1$ in $r(x_\alpha)$ is odd, then $r(x_\alpha)a_\beta\cdot x_1$ is in the subalgebra generated by \[(A\setminus\{x_1\})\cup\{x_1^2\}\cup\{a_\beta\}_{\beta\in B}\]
    
    Since $A$ generates $R$ as an algebra, every element in $R$ may be written as a sum 
    \[p=m_1+\cdots+m_n\]
    of monomials $m_i$ in the $x_\alpha$'s, and we will prove by induction on $n$ that if $p\in\ker(d)$, then $p$ is in the subalgebra of $R$ generated by\[(A\setminus\{x_1\})\cup\{x_1^2\}\cup x_1\cdot\mathrm{ann}(y)\]

    Let $n=1$, so that $p$ is a monomial. If $p$ is a monomial in the set $A\setminus\{x_1\}$, then we are done. Otherwise $p=x_1^Np'$ where $p'$ is a monomial in $A\setminus\{x_1\}$. If $N$ is even, then we are done. Otherwise
    \[0=d(p)=d(x_1\cdot x_1^{N-1}p')=y\cdot x_1^{N-1}p'\]
    so that $x_1^{N-1}p'\in\mathrm{ann}(y)$, finishing the proof in this case.

    If $n>1$, then if $d(m_i)=0$ for any $i$, the equation $d(p)=0$ implies that
    \[d(m_1+\cdots+m_{i-1}+m_{i+1}+\cdots m_n)=0\]
    and we may conclude by induction. Otherwise, $d(m_i)\neq0$ for all $i$, and  we may write $m_i=x_1^{k_i}q_i$ where $k_i>0$ is odd and $d(q_i)=0$. This implies that 
    \[0=d(p)=y\sum\limits_{i=1}^nx_1^{k_i-1}q_i\]
    so that 
    $\sum\limits_{i=1}^nx_1^{k_i-1}q_i\in\mathrm{ann}(y)$, completing the proof.
\end{proof}

\begin{rmk}\label{rmk:laurentgenerators}
    We will apply the preceding lemma to the case in which the generator $x_1$ is invertible. That is, let $R$ be as in the Lemma, and consider the cdga $S=R[x_1^{-1}]$. A priori, to generate $S$ as an algebra, one must include $x_1^{-1}$ into $\{x_\alpha\}$, and then since $x_1^{-1}$ is not a cycle, the assumption that all but one $x_\alpha$ is a cycle does not hold. However, over $\F_2$, the class $x_1^{-2}$ is a cycle since it is a square in $S$, so we may instead add this to the list $\{x_\alpha\}$. The key assumption that $x_1$ is the only non-cycle then still holds, and $\{x_\alpha\}$ generates $S$
 as an algebra because of the relation $x_1^{-1}=x_1^{-2}\cdot x_1$.
 \end{rmk}

To determine the differentials in the $\asig$-BSS, we will make use of the map of $\asig$-BSS's induced by the unit map $S^0\to \OmRhoSRho$. The $\asig$-BSS for the sphere 
(in the sense of \cref{eq:rhoBSS})
was shown by Li--Shi--Wang--Xu to be isomorphic to the associated graded homotopy fixed point spectral sequence for $BP_\R$, and the latter was computed by Greenlees--Meier \cite{GM},
following the earlier $\R$-motivic computation of Hill \cite[Theorem~3.1]{Hill}.

\begin{proposition}[{\cite[Theorem~4.7]{LSWX}\cite[Proposition A.2]{GM}}]\label{prop:annihilatorBPR}
    In the $\asig$-Bockstein SS converging to $\Ext_{\mathcal{E}^h_\bigstar}(\bH_\bigstar^h,\bH_\bigstar^h)$, the $E_{2^{n+1}-1}$-page is given by the subalgebra of
\[\F_2[\usig^\pm,\asig,v_0,v_1,\ldots]/(\asig^3v_1,\ldots,\asig^{2^n-1}v_{n-1})\]
generated by $\usig^{2^n}$, $\asig$, the $v_i$'s, and $\usig^{2^{j+1}k}v_j$ for $j<n-1$ and $k\in\Z$. There is a nonzero differential
    \[d_{2^{n+1}-1}(\usig^{2^{n}})=\asig^{2^{n+1}-1}v_n.\]
\end{proposition}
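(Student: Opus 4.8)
The plan is to recall the computation of the cited references in the precise inductive form that the $\asig$-BSS for $\OmRhoSRho$ requires. The first step is to identify the $E_1$-page. Running the analysis of \cref{prop:E1identifications} with $X = S^0$, so that the comodule is $\mathrm{gr}\bH_\bigstar^h$ itself, gives
\[
    E_1\text{-}\asig\mathrm{BSS} \cong \Ext_{(\mathrm{gr}\bH_\bigstar^h,\mathrm{gr}\mathcal{E}^h_\bigstar)}(\mathrm{gr}\bH_\bigstar^h,\mathrm{gr}\bH_\bigstar^h) \cong \F_2[\usig^\pm,\asig,v_0,v_1,\dots],
\]
with $v_i$ of internal degree $(2^i-1)\rho$ and homological degree $1$. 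The content of \cite[Theorem~4.7]{LSWX} is that this algebraic $\asig$-BSS coincides with the associated-graded $BP_\R$-homotopy fixed point spectral sequence, whose differentials were determined by Greenlees--Meier \cite[Proposition~A.2]{GM} following Hill \cite[Theorem~3.1]{Hill}. I would import from these sources the two facts that drive the calculation: at each page the only algebra generators that fail to be permanent cycles are powers of $\usig$, and the primary differential is $d_{2^{n+1}-1}(\usig^{2^n}) = \asig^{2^{n+1}-1}v_n$.

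Granting this, the pages are computed one at a time by \cref{lem:kernelgenerators}, with \cref{rmk:laurentgenerators} used to accommodate the invertibility of $\usig$. For the base case $n=0$ one has $d_1(\usig) = \asig v_0$; since $E_1$ is a localization of a polynomial ring and hence a domain, $y = \asig v_0$ is a nonzerodivisor, the annihilator is trivial, and the lemma returns $E_3 = \F_2[\usig^{\pm2},\asig,v_0,v_1,\dots]/(\asig v_0)$, which is the $n=1$ case. For the inductive step I would assume $E_{2^{n+1}-1}$ has the stated form, so that the unique non-cycle among the chosen generators is $x_1 = \usig^{2^n}$ with $d_{2^{n+1}-1}(\usig^{2^n}) = \asig^{2^{n+1}-1}v_n =: y$. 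The key algebraic input is the computation of $\mathrm{ann}(y)$: because $v_n$ appears in no relation on this page it is a nonzerodivisor, so $\mathrm{ann}(y) = \mathrm{ann}(\asig^{2^{n+1}-1})$, and the relations $\asig^{2^{j+1}-1}v_j = 0$ (for $j\le n-1$) show this annihilator to be $(v_0,\dots,v_{n-1})$. Feeding these into \cref{lem:kernelgenerators} replaces $\usig^{2^n}$ by $\usig^{2^{n+1}}$, imposes the new relation $\asig^{2^{n+1}-1}v_n = 0$, and adjoins the classes $v_j\cdot\usig^{2^n}$; among these only $j = n-1$ is genuinely new, giving $\usig^{2^n}v_{n-1}$, while the rest are already present as $\usig^{2^{j+1}k}v_j$. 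Combined with the invertibility of $\usig^{2^{n+1}}$, this reproduces exactly the description of $E_{2^{n+2}-1}$.

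I expect the genuine obstacle to lie not in this bookkeeping---which is formal once the annihilators are in hand---but in the two imported facts, namely that at every stage the only non-permanent-cycles are the relevant powers of $\usig$ (so that there is a single primary differential to analyze, up to the Leibniz rule) and that its target is exactly $\asig^{2^{n+1}-1}v_n$. Establishing these from scratch would require reproving the Greenlees--Meier/Hill determination of the $BP_\R$-homotopy fixed point spectral sequence, so I would instead cite it. A final point to check at each stage is that the previously adjoined torsion classes $\usig^{2^{j+1}k}v_j$ with $j<n-1$ remain cycles for $d_{2^{n+1}-1}$; this holds because they are permanent cycles detecting genuine classes in $\pi_\bigstar BP_\R$.
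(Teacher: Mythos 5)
The paper does not prove \cref{prop:annihilatorBPR} at all: it is imported wholesale from \cite[Theorem~4.7]{LSWX} and \cite[Proposition~A.2]{GM}, with the only ``proof'' being the citation. Your proposal instead reconstructs the inductive derivation of the page descriptions from the single imported differential, using \cref{lem:kernelgenerators} and \cref{rmk:laurentgenerators}. That is a legitimate and genuinely different route, and it is in fact the same strategy the paper uses for the harder case of $BP_\R\wedge\OmRhoSRho$ (the four-part proposition preceding \cref{cor:EinftyBSS}), so your write-up essentially supplies the $X=S^0$ specialization of that argument. The base case, the identification of $E_1$, and the observation that the only non-cycles are the powers of $\usig$ (cited, correctly, rather than reproved) are all fine.

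There is, however, one step that is not correct as written. You compute $\mathrm{ann}(\asig^{2^{n+1}-1}v_n)=(v_0,\dots,v_{n-1})$, but this is the annihilator in the ambient ring $\F_2[\usig^\pm,\asig,v_0,\dots]/(\asig^{2^{j+1}-1}v_j)$, not in $R=E_{2^{n+1}-1}$, which is only a \emph{subalgebra} of that ring. \cref{lem:kernelgenerators} requires $B$ to generate the annihilator ideal of $y$ in $R$ itself, and $\{v_0,\dots,v_{n-1}\}$ does not do so: for $j<n-1$ the element $\usig^{2^{j+1}}v_j$ lies in $E_{2^{n+1}-1}$ and annihilates $\asig^{2^{n+1}-1}v_n$, but it is not an $E_{2^{n+1}-1}$-multiple of any $v_i$, since the only pure powers of $\usig$ in the subalgebra are $\usig^{2^n\Z}$. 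The correct generating set is $\{\usig^{2^{j+1}k}v_j\mid 0\le j\le n-1,\ k\in\Z\}$, i.e.\ $E_{2^{n+1}-1}\cap(v_0,\dots,v_{n-1})$ --- exactly the point the paper's items (2) and (4) are careful about in the loop-space case. In the present instance the error is harmless to the conclusion, because the extra annihilator generators for $j<n-1$ are already members of your generating set $A$ and hence survive as generators regardless, and for $j=n-1$ your classes $v_{n-1}$ and $\usig^{2^n}v_{n-1}$ together with $\usig^{\pm2^{n+1}}$ do generate all $\usig^{2^nk}v_{n-1}$. But the hypothesis of the lemma is not satisfied with your choice of $B$, so the step should be repaired by using the larger generating set (or by arguing separately that the missing annihilator elements are already accounted for).
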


Taking $n=\infty$, Greenlees--Meier show there are no nontrivial extensions in the HFPSS for $BP_\R$ and deduce a presentation of $\pi_\bigstar BP_\R$ (\cite[Proposition A.4]{GM}). Combining again with \cite[Theorem~4.7]{LSWX}, one has the following.

\begin{thm}\label{thm:hilldifferentials}
    The algebra $\Ext_{\mathcal{E}^h_\bigstar}(\bH_\bigstar^h,\bH_\bigstar^h)$ is generated over $\F_2[\asig]$ by classes $\usig^{2^{n+1}k}v_n$, for $n\geq 0$ and $k\in\Z$, subject to relations
    \begin{enumerate}
        \item ($\asig$-torsion): $\asig^{2^{n+1}-1}\cdot \usig^{2^{n+1}k}v_n = 0$
        \item (product of generators): 
        \[
            \usig^{2^{n+1}k}v_n \cdot \usig^{2^{i+1}\ell}v_i 
            = v_n \cdot \usig^{2^{i+1} (2^{n-i}k+\ell)} v_i
        \]
        for $n \geq i$.
    \end{enumerate}
\end{thm}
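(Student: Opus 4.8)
The plan is to deduce the presentation directly from the $\asig$-Bockstein spectral sequence of \cref{prop:annihilatorBPR} by passing to the limit $n \to \infty$, and then to upgrade the resulting description of the associated graded to an honest presentation using the no-extensions input of Greenlees--Meier together with the identification of Li--Shi--Wang--Xu. First I would record that, for each $n$, the only differential on the page $E_{2^{n+1}-1}$ is $d_{2^{n+1}-1}(\usig^{2^n}) = \asig^{2^{n+1}-1}v_n$, so that the passage to the next page is a homology computation for a cdga over $\F_2$ in which $\usig^{2^n}$ is the unique non-cycle among the generators. This is exactly the situation of \cref{lem:kernelgenerators}, applied with $x_1 = \usig^{2^n}$ and $y = \asig^{2^{n+1}-1}v_n$; the invertibility of $\usig$ is accommodated as in \cref{rmk:laurentgenerators}, by adjoining the cycle $\usig^{-2^{n+1}}$ to the generating set in place of $\usig^{-2^n}$. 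Two consequences drive the bookkeeping: the differential makes $\asig^{2^{n+1}-1}v_n$ a boundary, and it kills $\usig^{2^n}$ while retaining its square $\usig^{2^{n+1}}$; moreover $\usig^{2^n}v_n$ itself supports the differential $\asig^{2^{n+1}-1}v_n^2 \neq 0$, so $v_n$ survives only in the company of powers $\usig^{2^{n+1}k}$.

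Reading off the $E_\infty$-page is then a matter of collating these facts across all $n$. Since no fixed power of $\usig$ survives all differentials, the surviving algebra is generated over $\F_2[\asig]$ exactly by the classes $\usig^{2^{n+1}k}v_n$ for $n \geq 0$ and $k \in \Z$. The $\asig$-torsion relation (1) is immediate from $\asig^{2^{n+1}-1}v_n$ being a boundary, multiplied by the permanent cycle $\usig^{2^{n+1}k}$, and the product relation (2) is the purely combinatorial identity obtained by writing $\usig^{2^{n+1}k}v_n \cdot \usig^{2^{i+1}\ell}v_i = v_n v_i \usig^{2^{n+1}k + 2^{i+1}\ell}$ and collecting the total $\usig$-power onto the lower-indexed factor, using $2^{n+1} = 2^{i+1} \cdot 2^{n-i}$ for $n \geq i$.

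The final step is to pass from this description of the $\asig$-associated graded to a presentation of $\Ext_{\mathcal{E}^h_\bigstar}(\bH_\bigstar^h, \bH_\bigstar^h)$ itself. Here I would invoke the identification of the $\asig$-BSS with the associated graded homotopy fixed point spectral sequence for $BP_\R$ from \cite[Theorem~4.7]{LSWX}, together with the absence of hidden extensions established by Greenlees--Meier \cite[Proposition~A.4]{GM}. I expect this to be the main obstacle: the Bockstein spectral sequence computes only the associated graded with respect to the $\asig$-filtration, and a priori there could be hidden multiplicative or additive extensions relating the generators $\usig^{2^{n+1}k}v_n$ across filtration levels. The essential content beyond \cref{prop:annihilatorBPR} is thus the external fact that no such extensions occur, which guarantees that the generators and relations of the associated graded lift verbatim to the algebra itself.
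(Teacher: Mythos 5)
Your proposal is correct and follows essentially the same route as the paper: the paper obtains this theorem by taking $n=\infty$ in \cref{prop:annihilatorBPR} and invoking the absence of extensions from \cite[Proposition A.4]{GM} together with \cite[Theorem~4.7]{LSWX}, which is exactly the structure of your argument. Your additional detail on running the Bockstein pages via \cref{lem:kernelgenerators} and \cref{rmk:laurentgenerators} is a faithful unpacking of what \cref{prop:annihilatorBPR} already records, and you correctly identify the no-extensions input as the step that lifts the associated-graded presentation to $\Ext_{\mathcal{E}^h_\bigstar}(\bH_\bigstar^h,\bH_\bigstar^h)$ itself.
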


\begin{cor}\label{cor:permanentcycles}
    In the $\asig$-BSS of \eqref{eq:rhoBSS}, the classes $\asig$, $t_0$, $e_j$, and $\usig^{2^{n+1}k}v_n$  are permanent cycles, for all $j\geq 1$, $n\ge0$, and $k\in\Z$. 
\end{cor}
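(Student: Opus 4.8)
The plan is to handle the four families by two different mechanisms: the classes $\asig$ and $\usig^{2^{n+1}k}v_n$ by naturality with respect to the unit map of $\OmRhoSRho$, and the classes $t_0$ and $e_j$ by exhibiting them as genuine $\mathcal{E}^h_\bigstar$-comodule primitives.

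For $\asig$ and $\usig^{2^{n+1}k}v_n$, I would use the map of $\asig$-Bockstein spectral sequences induced by the unit $S^0\to\OmRhoSRho$. On $\bH^h_\bigstar$-homology this is the comodule map $\bH^h_\bigstar\to\bH^h_\bigstar\Omega^\rho S^{\rho+1}$, which is natural for the $\asig$-adic filtration and hence induces a map from the sphere's $\asig$-BSS---the instance of \eqref{eq:rhoBSS} converging to $\Ext_{\mathcal{E}^h_\bigstar}(\bH^h_\bigstar,\bH^h_\bigstar)$---to the $\asig$-BSS for $BP_\R\wedge\OmRhoSRho$. On $E_1$-pages this is the ring map sending $\usig,\asig,v_n$ to the classes of the same name, as recorded in the proof of \cref{prop:E1identifications}. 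By \cref{thm:hilldifferentials} the classes $\asig$ and $\usig^{2^{n+1}k}v_n$ generate the abutment $\Ext_{\mathcal{E}^h_\bigstar}(\bH^h_\bigstar,\bH^h_\bigstar)$; in particular each survives the sphere's $\asig$-BSS and is therefore a permanent cycle there. Since a map of spectral sequences satisfies $f d_r=d_r f$ on every page, it carries permanent cycles to permanent cycles, so the images $\asig$ and $\usig^{2^{n+1}k}v_n$ in the $\asig$-BSS of \eqref{eq:rhoBSS} are permanent cycles.

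For $t_0$ and $e_j$, I would show they lift to honest comodule primitives, hence to classes in $\Ext^0$. Composing the $\mathcal{A}^h_\bigstar$-coaction formulas of \cref{prop:borelcomodule} with the quotient $\mathcal{A}^h_\bigstar\to\mathcal{E}^h_\bigstar$ computes the $\mathcal{E}^h_\bigstar$-coaction. Under this quotient the image of $\exi_j$ vanishes for $j\ge1$: the defining relation $\etau_{j-1}^2+\usig\exi_j+\asig\etau_0\exi_j+\asig\etau_j=0$ of $\mathcal{A}^h_\bigstar$ forces the image of $\exi_j$ to satisfy $(\usig+\asig\etau_0)\exi_j=0$ modulo $\etau_{j-1}^2=\asig\etau_j$, and $\usig+\asig\etau_0$ is a unit in the $\asig$-complete ring $\mathcal{E}^h_\bigstar$. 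Since $\exi_0=1$, the coaction formulas collapse to $\psi_R(e_j)=e_j\otimes1$ and $\psi_R(t_0)=t_0\otimes1$, so $t_0$ and $e_j$ are genuine cocycles of cobar degree $0$ sitting in $\asig$-filtration $0$. A class on the $E_1$-page that lifts to an honest cocycle of the filtered cobar complex supports no differential, whence $t_0$ and $e_j$ are permanent cycles.

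I expect the delicate point to be the bookkeeping rather than any hard computation: in the second paragraph one must correctly identify the $\mathcal{E}^h_\bigstar$-coaction via $\exi_j\mapsto 0$, and in the first paragraph one must convert ``generator of the abutment'' into ``permanent-cycle representative on $E_1$'' before invoking naturality. Once these are in place, both conclusions are formal.
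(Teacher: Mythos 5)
Your proof is correct and follows essentially the same two-step strategy as the paper: comodule primitivity (hence a lift to $\Ext^0$) for $t_0$ and the $e_j$, and \cref{thm:hilldifferentials} combined with naturality along the unit map for the classes $\usig^{2^{n+1}k}v_n$. The only cosmetic difference is that the paper treats $\asig$ as an $\mathcal{E}^h_\bigstar$-comodule primitive alongside $t_0$ and $e_j$ rather than pulling it back from the sphere; both work, and your extra justification that $\exi_j\mapsto 0$ under $\mathcal{A}^h_\bigstar\to\mathcal{E}^h_\bigstar$ (which the paper leaves implicit, citing the quotient Hopf algebroid description) is sound.
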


\begin{proof}
The classes $\asig$, $t_0$, and $e_j$ are all $\mathcal{E}^h_\bigstar$ comodule primitives in $\bH^h_\bigstar\Omega^\rho S^{\rho+1}$, hence the classes on $E_1$ admit lifts to $\Ext^0$ and thus are permanent cycles. The classes $\usig^{2^{n+1}k}v_n$ for $n\ge0$ and $k\in\Z$ are permanent cycles by  \cref{thm:hilldifferentials}.
\end{proof}

Beginning with our $E_1$-page for the $\asig$-BSS for $BP_\R\wedge \OmRhoSRho$
\[E_1\cong \F_2[\usig^\pm,\asig,v_0,v_1,\ldots,t_0,e_1,e_2,\ldots]/(t_0^2+\usig e_1,r_i | i\ge1),\]
we will show that, up to use of the Leibniz rule, the only nonzero differentials are
   \[d_{2^{n+1}-1}(\usig^{2^{n}})=\asig^{2^{n+1}-1}v_n\]
from  \cref{prop:annihilatorBPR}. Following  \cref{lem:kernelgenerators}, this will require computing the annihilator ideal of $\asig^{2^{n+1}-1}v_n$ on $E_{2^{n+1}-1}$. For this, we will need an algebraic lemma showing that various rings related to $E_{1}$ are integral domains. 

\begin{restatable}{lemma}{LemmaNineNine}
\label{lemma:integraldomain}
For all $k\ge 0$, the ring
    \[
        C_k=\frac{\F_2[\usig^{\pm 2^{k}},\asig,v_{k},\ldots,t_0,e_1,e_2,\ldots]}{(t_0^{2^{k+1}}+\usig^{2^{k}}e_1^{2^{k}},\overline{r_{k+2}},\ldots)}
    \]
is an integral domain, where the relations $\overline{r_{j}}$ for $j\ge k+2$ are the projections 
modulo $(v_0,\ldots,v_{k-1})$
of the relations
\[
r_j=\sum\limits_{i=0}^{j-1}e_{j-i}^{2^i}v_i
\]
from \cref{prop:E1identifications}.
\end{restatable}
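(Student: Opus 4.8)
The plan is to strip off, one at a time, the generators appearing in at most one defining relation, reducing to a core quotient in the $e_i$ and $v_{k+j}$, and then to prove that core is a domain by inverting $e_1$ and checking that $e_1$ is a nonzerodivisor. First I would note that $\asig$ occurs in no relation, so $C_k = D_k[\asig]$ and it suffices to treat
\[
    D_k = E_k[t_0]\big/\big(t_0^{2^{k+1}} + \usig^{2^k}e_1^{2^k}\big),
\]
where $E_k$ is obtained from $C_k$ by deleting $\asig$, $t_0$, and the displayed relation. Since $t_0^{2^{k+1}} + w$ (with $w := \usig^{2^k}e_1^{2^k}$) is monic, $D_k$ embeds into $\mathrm{Frac}(E_k)[t_0]/(t_0^{2^{k+1}}+w)$; granting that $E_k$ is a domain, $D_k$ is a domain provided $t^{2^{k+1}}+w$ is irreducible over $F := \mathrm{Frac}(E_k)$, which over $\F_2$ is equivalent to $w \notin F^2$.

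To see $w$ is not a square I would use the $RO(C_2)$-grading. Since $E_k$ is a graded domain and Frobenius is additive over $\F_2$, a homogeneous element of $F$ is a square exactly when half of its degree lies in the degree group $\Lambda$ of $F$. Here $\Lambda = \Z\cdot 2^k(1-\sigma) + \Z(1+\sigma)$ (the $(1+\sigma)$-part because $\gcd_i(2^i-1)=1$), while $\tfrac12|w| = 2^k\cdot 1$; writing $2^k\cdot 1 = a\,2^k(1-\sigma)+b(1+\sigma)$ forces $a=\tfrac12$, so $\tfrac12|w|\notin\Lambda$ and $w\notin F^2$.

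It remains to show $E_k$ is a domain. As $\usig^{\pm 2^k}$ occurs in no remaining relation, $E_k = E_k'[\usig^{\pm 2^k}]$ with
\[
    E_k' = \F_2[v_k,v_{k+1},\ldots,e_1,e_2,\ldots]\big/\big(\overline{r_{k+2}},\overline{r_{k+3}},\ldots\big),
    \qquad
    \overline{r_{k+n}} = \sum_{j=0}^{n-1} e_{n-j}^{2^{k+j}}\,v_{k+j},
\]
so it suffices that $E_k'$ be a domain. In $\overline{r_{k+n}}$ the top variable $v_{k+n-1}$ has coefficient $e_1^{2^{k+n-1}}$, so inverting $e_1$ lets one solve recursively for $v_{k+1},v_{k+2},\ldots$ and obtain $E_k'[e_1^{-1}] \cong \F_2[v_k,e_1^{\pm1},e_2,e_3,\ldots]$, a domain. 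Thus $E_k'$ is a domain once $e_1$ is a nonzerodivisor, i.e. once $E_k' \hookrightarrow E_k'[e_1^{-1}]$.

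The nonzerodivisor claim is the main obstacle, because the leading monomials $e_1^{2^{k+n-1}}v_{k+n-1}$ of the relations are \emph{not} a regular sequence, so no initial-ideal shortcut applies; instead I would argue by a dimension count at the finite stages $P_N = \F_2[v_k,\ldots,v_{k+N-1},e_1,\ldots,e_N]$, $A_N = P_N/(\overline{r_{k+2}},\ldots,\overline{r_{k+N}})$, grading $P_N$ by the sum of the underlying topological degree and the Adams degree, which makes every variable (and hence every relation and $e_1$) homogeneous of positive degree in the Cohen--Macaulay ring $P_N$. The key technical input is the auxiliary claim that, writing $G_m = \sum_{j=0}^{m-1}e_{m-j}^{2^{k+j}}v_{k+j}$ (so $\overline{r_{k+n}}=G_n$), the sequence $G_1,\ldots,G_M$ cuts out a locus of dimension exactly $M$ in $\A^{2M}$; this is proved by induction on $M$, the step splitting $\mathrm{Spec}$ along $\{e_1=0\}$, on which the $G_m$ restrict to the same family shifted by $e_i\mapsto e_{i+1}$ (now including the bottom relation $e_2^{2^k}v_k$). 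Granting this, the common zero locus of $\{e_1,\overline{r_{k+2}},\ldots,\overline{r_{k+N}}\}$ is the $\{e_1=0\}$ slice, of dimension $N$ (a shifted copy of the auxiliary locus together with the free variable $v_{k+N-1}$), so these $N$ homogeneous elements form a regular sequence; permuting the regular sequence of positive-degree homogeneous elements then shows $e_1$ is a nonzerodivisor on $A_N$. Finally, any witness $e_1f\in(\overline{r_{k+2}},\ldots)$ in $E_k'$ involves only finitely many variables and relations, hence lies in some $A_N$, so $f$ is already in the ideal; thus $e_1$ is a nonzerodivisor on $E_k'$, whence $E_k'$, $E_k$, $D_k$, and $C_k=D_k[\asig]$ are all domains, proving the lemma.
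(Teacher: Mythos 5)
Your proof is correct, but it takes a genuinely different route from the paper's. The paper follows Ravenel--Wilson \cite[Lemma 4.15(b)]{RavenelWilson77} essentially verbatim: it folds $t_0$ (renamed $e_0$) and its relation into the system, and proves that each truncated ideal $(r_{1,k},\dots,r_{n,k})$ is \emph{prime} by a double induction, passing through the quotients $A_i=A/(e_0,\dots,e_{n-i})$ and the localizations $B_i=A_i[e_{n-i+1}^{-1}]$, with a regularity-plus-minimal-counterexample argument to descend primality from $B_i$ back to $A_i$. You instead (i) peel off $t_0$ at the outset via the purely inseparable irreducibility criterion ($x^{2^{k+1}}-w$ is irreducible over $F$ iff $w\notin F^2$), checking $w=\usig^{2^k}e_1^{2^k}\notin F^2$ by the observation that $\tfrac12|w|=2^k$ does not lie in the degree lattice $\Z\cdot 2^k(1-\sigma)+\Z(1+\sigma)$ --- a pleasant use of the $RO(C_2)$-grading with no analogue in the paper; and (ii) reduce the core ring to the single assertion that $e_1$ is a nonzerodivisor, proved by a dimension count at finite polynomial stages together with Cohen--Macaulayness and permutability of homogeneous regular sequences. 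Your route only ever inverts the one variable $e_1$ (where the relations become a triangular change of coordinates) rather than the paper's cascade of localizations, and it correctly handles the genuine subtlety that the leading terms $e_1^{2^{k+n-1}}v_{k+n-1}$ are not themselves a regular sequence, via the auxiliary induction on $V(G_1,\dots,G_M)\subset\A^{2M}$ that restores the bottom relation $e_2^{2^k}v_k$ on the slice $\{e_1=0\}$. The trade-off is that you invoke somewhat heavier (though entirely standard) commutative algebra --- the unmixedness/codimension criterion for regular sequences in CM rings, permutation of graded regular sequences, and the inseparable-extension irreducibility criterion --- where the paper's argument is longer but uses only regular sequences and localization by hand. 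Both proofs conclude by passing from finite truncations to the infinite presentation in the same way (any witness involves finitely many variables and relations).
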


The proof
is a modification of the proof of \cite[Lemma 4.15(b)]{RavenelWilson77}. 
For the purpose of being self-contained, we provide the details of our modification, though we defer the proof to \cref{ProofLemma99} in order to not disrupt the narrative. 

\begin{proposition}
For all $n\ge 0$, the following hold for the $E_{2^{n+1}-1}$-page of the $a_\sigma$-BSS of \eqref{eq:rhoBSS}.
\begin{enumerate}
\item $E_{2^{n+1}-1}$ is isomorphic to the subalgebra of 
\[
\frac{\F_2[\usig^\pm,\asig,v_0,v_1,\ldots,t_0,e_1,e_2,\ldots]}{(t_0^2+\usig e_1,r_1,r_2,\ldots,\asig^{2^{j+1}-1}v_j|0\le j\le n-1)}
\]
generated by
\[
\{\usig^{\pm 2^{n}},\asig,v_n,v_{n+1},\ldots,t_0,e_1,e_2,\ldots,\usig^{2^{j+1}k}v_j|0\le j\le n-1,k\in\Z\}
\] 
\item The annihilator ideal of $\asig^{2^{n+1}-1}$ in $E_{2^{n+1}-1}$ is 
\[
    \mathrm{ann}(\asig^{2^{n+1}-1})=(\usig^{2^{j+1}k}v_j|0\le j\le n-1,k\in\Z)
\]
\item 
Passing to a quotient by the annihilator ideal gives

\[
    E_{2^{n+1}-1}/\mathrm{ann}(\asig^{2^{n+1}-1})\cong \frac{\F_2[\usig^{\pm 2^{n}},\asig,v_n,v_{n+1},\ldots,t_0,e_1,e_2,\ldots]}{(t_0^{2^{n+1}}+\usig^{2^{n}}e_1^{2^{n}},\overline{r_{n+1}},\overline{r_{n+2}},\ldots)}
\]
where the relations $\overline{r_{j}}$ for $j\ge n+1$ are the projections of the relations $r_j$ from \cref{prop:E1identifications} mod $(v_0,\ldots,v_{n-1})$.
\item The annihilator ideal of $\asig^{2^{n+1}-1} v_n$ in $E_{2^{n+1}-1}$ is \[(e_1^{2^n},\usig^{2^{j+1}k}v_j|0\le j\le n-1,k\in\Z)\]
\end{enumerate}
\end{proposition}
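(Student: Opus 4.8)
The plan is to prove all four parts simultaneously by induction on $n$, treating part (1) as the main assertion and parts (2)--(4) as the commutative-algebra engine that drives the inductive step. The base case $n=0$ is exactly \cref{prop:E1identifications}: for $n=0$ both the list of relations $\asig^{2^{j+1}-1}v_j$ and the family $\usig^{2^{j+1}k}v_j$ are empty, so the asserted subalgebra is the whole $E_1$-page. The only external input about differentials is that, by naturality of the $\asig$-BSS under the unit map $S^0\to\OmRhoSRho$ together with \cref{prop:annihilatorBPR} and \cref{thm:hilldifferentials}, one has $d_{2^{n+1}-1}(\usig^{2^n})=\asig^{2^{n+1}-1}v_n$, while every other algebra generator appearing in part (1) is a permanent cycle by \cref{cor:permanentcycles}. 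Thus on $E_{2^{n+1}-1}$ the unique non-cycle generator is $\usig^{2^n}$, and $d_{2^{n+1}-1}$ is determined by the Leibniz rule; differentials occur only on the pages $2^{m+1}-1$.

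First I would dispatch parts (2) and (3). Write $I=(\usig^{2^{j+1}k}v_j\mid 0\le j\le n-1,\ k\in\Z)$. Quotienting the subalgebra of part (1) by $I$ sets $v_0,\dots,v_{n-1}=0$ (the $k=0$ members) and restricts the available powers of $\usig$; squaring the relation $t_0^2=\usig e_1$ a total of $n$ times yields $t_0^{2^{n+1}}=\usig^{2^n}e_1^{2^n}$, and reducing $r_{n+1}$ modulo $(v_0,\dots,v_{n-1})$ gives $\overline{r_{n+1}}=e_1^{2^n}v_n$. This identifies $E_{2^{n+1}-1}/I$ with the ring displayed in part (3), namely $C_n/(e_1^{2^n}v_n)$. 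Since none of the defining relations of this ring involve $\asig$, it is a polynomial ring in $\asig$ over its $\asig$-free quotient, so $\asig^{2^{n+1}-1}$ is a nonzerodivisor in $E_{2^{n+1}-1}/I$. Combined with the trivial inclusion $I\subseteq\mathrm{ann}(\asig^{2^{n+1}-1})$ (each $\asig^{2^{j+1}-1}v_j$ vanishes and $2^{j+1}-1\le 2^{n+1}-1$ for $j<n$), this forces $\mathrm{ann}(\asig^{2^{n+1}-1})=I$, which is part (2); part (3) is then immediate.

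Next, for part (4), the inclusion $(e_1^{2^n},I)\subseteq\mathrm{ann}(\asig^{2^{n+1}-1}v_n)$ follows by multiplying $r_{n+1}=0$ by $\asig^{2^{n+1}-1}$ and using $\asig^{2^{n+1}-1}v_i=0$ for $i<n$. For the reverse inclusion I would work in $E_{2^{n+1}-1}/I\cong C_n/(e_1^{2^n}v_n)$: given $x\asig^{2^{n+1}-1}v_n=0$, lift to $\tilde x\in C_n$, use that $C_n$ is an integral domain (\cref{lemma:integraldomain}) to cancel the nonzero factor $v_n$, reducing to $\tilde x\,\asig^{2^{n+1}-1}\in(e_1^{2^n})$ in $C_n$, and finally use that the relations of $C_n/(e_1^{2^n})$ again avoid $\asig$ to conclude $\asig^{2^{n+1}-1}$ is a nonzerodivisor there, whence $\tilde x\in(e_1^{2^n})$. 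Pulling back gives $x\in(e_1^{2^n},I)$, as claimed.

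Finally, the inductive step for part (1): on $E_{2^{n+1}-1}$ the only non-cycle generator is $x_1=\usig^{2^n}$ with $y=d_{2^{n+1}-1}(x_1)=\asig^{2^{n+1}-1}v_n$, so I would apply \cref{lem:kernelgenerators} in the Laurent form of \cref{rmk:laurentgenerators} (using the cycle $x_1^{-2}=\usig^{-2^{n+1}}$), with $\mathrm{ann}(y)$ supplied by part (4). The resulting generators of $E_{2^{n+2}-1}$ are the old cycle generators, together with $\usig^{\pm 2^{n+1}}$ and the classes $a_\beta,\ a_\beta x_1$ for $a_\beta\in\{e_1^{2^n}\}\cup\{\usig^{2^{j+1}k}v_j\}$. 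Matching this with the level-$(n+1)$ list of part (1) is the crux: the term $e_1^{2^n}\usig^{2^n}$ is absorbed via $t_0^{2^{n+1}}=\usig^{2^n}e_1^{2^n}$, and each $\usig^{2^{j+1}k+2^n}v_j$ with $j<n$ is already of the allowed form since $2^{j+1}\mid 2^n$; the new relation $\asig^{2^{n+1}-1}v_n=0$ holds because it is the boundary $d_{2^{n+1}-1}(\usig^{2^n})$. I expect this last reconciliation --- matching the output of \cref{lem:kernelgenerators} to the precise Laurent-power structure of the claimed subalgebra, and in particular ensuring that part (4) produces exactly the annihilator generators $\{e_1^{2^n},\usig^{2^{j+1}k}v_j\}$ --- to be the main obstacle, since it is precisely here that the integral-domain property of \cref{lemma:integraldomain} and the $\asig$-free form of all relations are indispensable.
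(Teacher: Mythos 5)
Your proposal is correct and follows essentially the same route as the paper: induction on $n$, with the base case given by \cref{prop:E1identifications}, the inductive step for item (1) driven by \cref{lem:kernelgenerators} (in the Laurent form of \cref{rmk:laurentgenerators}) together with item (4) at the previous stage, and items (2)--(4) resting on the integral-domain property of $C_n$ from \cref{lemma:integraldomain}. The only cosmetic difference is the order of (2) and (3): you deduce (2) from the $\asig$-torsion-freeness of $E_{2^{n+1}-1}/I\cong C_n/(e_1^{2^n}v_n)$, whereas the paper computes the annihilator of $\asig^{2^{n+1}-1}$ in the ambient algebra, intersects with the subalgebra to get (2), and then obtains (3) from (1) and (2).
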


\begin{pf}  
We begin with the base case $n=0$. The description of $E_1$ in item (1) is \cref{prop:E1identifications}. Items (2) and (3) follow in {the $n=0$ case} from the fact that $E_1$ is $\asig$-torsion free. For item (4), since $r_1=v_0e_1$, the claim follows from the isomorphism $E_1\cong C_0/(v_0e_1)$, where $C_0$ is the integral domain of  \cref{lemma:integraldomain} with $k=0$.

For the inductive step, assume that items (1)-(4) hold for $E_{2^{n}-1}$. By item (1), all the algebra generators of $E_{2^n-1}$ are permanent cycles except $\usig^{2^{n}}$ (using the reasoning in \cref{rmk:laurentgenerators}), so the only nonzero differential (up to use of the Leibniz rule) on $E_{2^{n}-1}$ is
\[d_{2^{n}-1}(\usig^{2^{n}})=\asig^{2^{n}-1}v_{n-1}.\]
Using \cref{lem:kernelgenerators}  and item (4) for $E_{2^n-1}$, the desired description in item (1) for $E_{2^{n+1}-1}$ follows. Item (2) for $E_{2^{n+1}-1}$ now follows from the fact that the annihilator of $\asig^{2^{n+1}-1}$ in the larger algebra 
\[
\frac{\F_2[\usig^\pm,\asig,v_0,v_1,\ldots,t_0,e_1,e_2,\ldots]}{(t_0^2+\usig e_1,r_1,r_2,\ldots,\asig^{2^{j+1}-1}v_j|0\le j\le n-1)}
\]
is $(v_0,\ldots,v_{n-1})$. Indeed this implies that the annihilator of $\asig^{2^{n+1}-1}$ in $E_{2^{n+1}-1}$ is $E_{2^{n+1}-1}\cap (v_0,\ldots,v_{n-1})$, from which the claim follows.

Item (3) for $E_{2^{n+1}-1}$ then follows directly from items (1) and (2) for $E_{2^{n+1}-1}$. To establish item (4) for $E_{2^{n+1}-1}$, one must compute the annihilator of $v_n$ in $E_{2^{n+1}-1}/\mathrm{ann}(\asig^{2^{n+1}-1})$, which is given  by item (3) for $E_{2^{n+1}-1}$. It suffices now to observe that there is a ring isomorphism
\[C_{n}/(v_ne_1^{2^n})\cong E_{2^{n+1}-1}/\mathrm{ann}(\asig^{2^{n+1}-1}),\]
where $C_n$ is the integral domain of \cref{lemma:integraldomain}.
\end{pf}

Setting $n=\infty$, we have the following.

\begin{cor}
\label{cor:EinftyBSS}
    The $E_\infty$-page of the $\asig$-BSS for $BP_\R\wedge \OmRhoSRho$ is isomorphic to the subalgebra of 
\[
\frac{\F_2[\usig^\pm,\asig,v_0,v_1,\ldots,t_0,e_1,e_2,\ldots]}{(t_0^2+\usig e_1,r_1,r_2,\ldots,\asig^{2^{j+1}-1}v_j|j\ge0)}
\]
generated by
\[
\{\usig^{2^{j+1}k}v_j,\asig,t_0,e_1,e_2,\ldots|j\ge0,k\in\Z\}.
\]  
\end{cor}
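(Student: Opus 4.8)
The plan is to realize the $E_\infty$-page as the degreewise limit $\lim_n E_{2^{n+1}-1}$ of the pages computed in the preceding proposition, so that the statement follows by letting $n\to\infty$. First I would record, from the induction in that proposition together with \cref{prop:annihilatorBPR} and \cref{cor:permanentcycles}, that the only nonzero differentials in the $\asig$-BSS are the $d_{2^{n+1}-1}$ (up to the Leibniz rule), with $d_{2^{n+1}-1}(\usig^{2^n})=\asig^{2^{n+1}-1}v_n$. Consequently the pages are constant for $2^n\le r\le 2^{n+1}-1$, so in each tridegree $E_\infty$ is the eventual value of the tower $\{E_{2^{n+1}-1}\}_n$, provided this tower stabilizes.

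The crux is degreewise stabilization, and I would argue it on two fronts. On the one hand, the target of $d_{2^{n+1}-1}$ is a multiple of $\asig^{2^{n+1}-1}$ and hence lies in $\asig$-filtration at least $2^{n+1}-1$, so only finitely many $n$ produce a differential landing in a fixed filtration. On the other hand, for differentials leaving a fixed tridegree I would use that, by item (1) of the preceding proposition, the only algebra generator of $E_{2^{n+1}-1}$ that is not a permanent cycle is $\usig^{\pm 2^n}$ (invoking \cref{rmk:laurentgenerators} and \cref{cor:permanentcycles}). Every monomial supporting a nonzero differential therefore has the form $\usig^{2^n}z$ with $z$ a cycle not annihilating $\asig^{2^{n+1}-1}v_n$; by item (4) of the preceding proposition this annihilator already contains every $\usig^{2^{j+1}k}v_j$ with $j\le n-1$, so such a $z$ may only involve $\asig,t_0,e_i$ and the classes $v_m,\usig^{2^{m+1}k}v_m$ with $m\ge n$. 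At a fixed underlying dimension the latter are excluded once $n$ is large (each has underlying dimension at least $2(2^n-1)$), leaving $z$ built from $\asig,t_0,e_i$ alone, whose $\sigma$-weight is then bounded in terms of the fixed underlying dimension and $\asig$-filtration. Since $\usig^{2^n}z$ lying in a fixed degree would force the $\sigma$-weight of $z$ to grow like $2^n$, no such source survives for large $n$, and the tower stabilizes.

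Finally I would pass to the limit in item (1): as $n\to\infty$ the ambient quotient acquires the relations $\asig^{2^{j+1}-1}v_j$ for all $j\ge 0$, alongside $t_0^2+\usig e_1$ and the $r_i$, while on generators the family $\usig^{2^{j+1}k}v_j$ (recovering each $v_n$ at $k=0$) together with $\asig,t_0,e_1,e_2,\dots$ persists, and the only generator failing to survive is $\usig^{\pm 2^n}$, consistent with its departure to infinite $\sigma$-weight. The hard part will be this stabilization step, precisely because inverting $\usig$ makes degrees unbounded: the real content is to confirm that, once the annihilator ideal is discarded, the surviving cycles in a fixed degree have bounded $\sigma$-weight, so that only finitely many differentials meet each tridegree and the colimit of the finite-relation subalgebras is exactly the all-relations subalgebra in the statement.
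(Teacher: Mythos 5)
Your overall route is the same as the paper's: the corollary is obtained by letting $n\to\infty$ in the preceding proposition describing $E_{2^{n+1}-1}$ (the paper's entire proof is the phrase ``Setting $n=\infty$''). What you add is an explicit justification of the degreewise stabilization of the tower $\{E_{2^{n+1}-1}\}$, which the paper leaves implicit; your two-pronged argument (targets of $d_{2^{n+1}-1}$ have $\asig$-filtration at least $2^{n+1}-1$, so only finitely many differentials enter a fixed spot; sources are forced out of any fixed tridegree for large $n$ by a weight count) is the right way to fill that gap. One step needs repair as written: when you reduce to ``$z$ built from $\asig,t_0,e_i$ alone,'' you have overlooked that $z$ may also contain \emph{even} powers of $\usig^{\pm 2^{n}}$ (these are cycles by \cref{rmk:laurentgenerators} and are not in the annihilator ideal), so the $\sigma$-weight of $z$ is not bounded as claimed. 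The fix is to note that the total $\usig$-exponent of any monomial supporting a nonzero $d_{2^{n+1}-1}$ is an odd multiple of $2^{n}$, hence has absolute value at least $2^{n}$; since the fixed $\asig$-filtration bounds the power of $\asig$ and the fixed underlying degree then bounds the $t_0$, $e_i$, $v_j$ content, the $\sigma$-weight contributed by everything other than $\usig$ is bounded, contradicting membership in a fixed $RO(C_2)$-degree once $n$ is large. With that adjustment the stabilization argument, and hence the passage to the stated subalgebra with all relations $\asig^{2^{j+1}-1}v_j$, goes through.
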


Our analysis of the Adams spectral sequence will use the following additional properties of the $E_\infty$-page.

\begin{lem}\label{lem:rhoBSSevenness}
    The $E_\infty$-page of the $\asig$-BSS for $BP_\R\wedge \OmRhoSRho$ satisfies: 
    \begin{enumerate}
        \item 
        The $E_\infty$-page vanishes in degrees of the form $j\rho-1$ for $j\in\Z$
        \item In any fixed degree,
        the sequence of ideals $\{(\asig^n)\}_{n\ge0}$ in $E_\infty$ is constant for $n$ sufficiently large.
    \end{enumerate}
\end{lem}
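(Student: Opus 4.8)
The plan is to prove both statements directly from the explicit description of the $E_\infty$-page in \cref{cor:EinftyBSS}, realizing it as the subalgebra of
\[
R=\frac{\F_2[\usig^\pm,\asig,v_0,v_1,\ldots,t_0,e_1,e_2,\ldots]}{(t_0^2+\usig e_1,\, r_k,\, \asig^{2^{j+1}-1}v_j)}
\]
generated by $\{\usig^{2^{j+1}k}v_j,\asig,t_0,e_i\}$, while keeping careful track of $RO(C_2)$-degrees. Writing a degree as $a+b\sigma$, I record that $v_j$ and $e_i$ contribute $0$ to the quantity $a-b$, while $\asig$ and $t_0$ each contribute $+1$ and each factor of $\usig$ contributes $+2$. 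The target degrees $j\rho-1=(j-1)+j\sigma$ are exactly the locus $a-b=-1$.

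For part (1) the decisive observation is that in this subalgebra a \emph{negative} power of $\usig$ can enter only through the generators $\usig^{2^{j+1}k}v_j$; the only other source of $\usig$, the relation $t_0^2=\usig e_1$, contributes \emph{positive} powers together with a factor of $e_1$. I would first grade by Snaith weight (\cref{prop:snaithweights}), under which $t_0$ is the unique odd-weight generator, so that every odd-weight class is $t_0$ times an even-weight class; this reduces the problem to even-weight monomials, which after applying $t_0^2=\usig e_1$ take the form $\asig^{c}\usig^{p}e_1^{\beta_1}\prod_{i\ge2}e_i^{b_i}\prod_j v_j^{a_j}$. The condition $a-b=2p+c=-1$ forces $p<0$, so some $v_{j_0}$ appears and the $\usig$-power carried by the $v$-factors is a nonzero multiple of $2^{j_0+1}$, hence at most $-2^{j_0+1}$. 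The $\asig$-torsion relation $\asig^{2^{j_0+1}-1}v_{j_0}=0$ bounds $c\le 2^{j_0+1}-2$, and together with $a-b=-1$ these inequalities force a large power $\beta_1\ge 2^{j_0}+1$ of $e_1$ to accompany $v_{j_0}$. I would then run the relations $r_k$ in the regular-sequence form of \cref{prop:E1identifications} — which rewrite $e_1^{2^{i}}v_{i}$ inside the ideal $(v_0,\dots,v_{i-1})$ — to lower the index of the surviving $v$-factor, interleaving this with the torsion relations $\asig^{2^{i+1}-1}v_i=0$: as the index $i$ drops, the torsion threshold $2^{i+1}-1$ drops while the fixed exponent $c$ does not, so the reduced monomials are successively annihilated, and any term reaching $v_0$ vanishes since a factor of $e_1$ remains and $e_1v_0=r_1=0$. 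The odd-weight case is identical after shifting the target to $a-b=-2$.

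For part (2) I would reduce the claim to the assertion that $E_\infty$ is finite-dimensional over $\F_2$ in each fixed Adams bidegree. Granting this, $\asig$ has Adams filtration $0$, so in a fixed bidegree $\delta$ one has $(\asig^{n+1})_\delta=\asig\cdot(\asig^{n})_{\delta+\sigma}\subseteq(\asig^{n})_\delta$, a descending chain of subspaces of the finite-dimensional space $(E_\infty)_\delta$, and such a chain is eventually constant. Finite-dimensionality in a fixed bidegree follows from the generators in \cref{cor:EinftyBSS}: fixing the Adams filtration $s$ forces exactly $s$ factors of the form $\usig^{2^{j+1}k}v_j$; the torsion relations bound the $\asig$-exponent in terms of the smallest index of a $v$-factor present; and fixing the $RO(C_2)$-degree bounds the remaining exponents, the key point being that the admissible $\usig$-exponents lie in $2^{j+1}\Z$ and that only their total per index affects the monomial, so a $2$-adic divisibility argument leaves only finitely many choices compatible with a prescribed value of $a-b$.

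The main obstacle is the combinatorial core of part (1): one must run the $r_k$-reductions and the $\asig$-torsion relations in tandem and verify that the exponents forced by the degree constraint are always exactly sufficient to reach zero. The subalgebra constraint and the torsion relations alone only pin down the \emph{shape} of a hypothetical class; the genuinely essential ingredient is that lowering the $v$-index via the regular sequence $\{r_k\}$ strengthens the applicable torsion relation, and organizing this descent cleanly — rather than as an unwieldy induction over monomials — is the crux. It may well be cleaner to phrase the descent using the integral-domain quotients of \cref{lemma:integraldomain}, in which the offending degrees are more transparently empty.
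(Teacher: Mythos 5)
Your overall strategy for part (1) --- bookkeeping the quantity $a-b$, concluding that a negative power of $\usig$ must enter through some $\usig^{2^{j_0+1}k}v_{j_0}$ with $k<0$, and then descending through the relations $r_{n+1}$ and the torsion relations $\asig^{2^{j+1}-1}v_j=0$ --- is exactly the paper's. But the mechanism you give for the descent is wrong, and it is precisely the step you yourself flag as the crux. You claim that after one application of $e_1^{2^{j_0}}v_{j_0}\equiv\sum_{i<j_0}e_{j_0+1-i}^{2^i}v_i$ the resulting terms are ``successively annihilated'' because ``the torsion threshold $2^{i+1}-1$ drops while the fixed exponent $c$ does not.'' This is false: the degree constraint $2p+c=-1$ only forces $c$ to be odd, so $c$ can equal $1$, in which case no torsion relation $\asig^{2^{i+1}-1}v_i=0$ with $i\geq 1$ ever applies. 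The reduced monomials are not killed by torsion; they must be reduced \emph{again} via $r_{i+1}$, which costs a further $2^i$ powers of $e_1$, and your two separate inequalities $c\le 2^{j_0+1}-2$ and $\beta_1\ge 2^{j_0}+1$ are too weak to guarantee the $e_1$-budget survives an arbitrary chain of such reductions. The missing idea is the \emph{combined} invariant: the paper proves by induction on $n$ that $\asig^\alpha t_0^\beta v_n=0$ whenever $\alpha+\beta\ge 2^{n+2}-2$ (the degree constraint supplies $\alpha+\beta\ge 2^{n+2}-1$), the point being that at each stage \emph{either} $\alpha\ge 2^{n+1}-1$ and torsion applies, \emph{or} $\beta\ge 2^{n+1}$ and one substitutes $t_0^{2^{n+1}}=\usig^{2^n}e_1^{2^n}$ and applies $r_{n+1}$; after the substitution the invariant $\alpha+(\beta-2^{n+1})\ge 2^{n+1}-2\ge 2^{i+2}-2$ still holds for every index $i\le n-1$ that can appear, so the induction closes. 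Without an invariant of this kind your descent does not terminate at zero.

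For part (2) you take a genuinely different route: reduce to finite-dimensionality of $E_\infty$ in each fixed pair (degree, Adams filtration) and invoke the descending chain condition for subspaces of a finite-dimensional vector space. This is workable, but note two things. First, the finite-dimensionality claim itself requires essentially the same $2$-adic degree analysis that the paper performs directly (bounding the possible $v$-indices, $\usig$-exponents, and $\asig$-exponents compatible with a fixed value of $a-b$ and $a+b$), so the reduction defers rather than avoids the work. Second, per-bidegree stabilization yields a stabilization point that a priori depends on the Adams filtration, whereas the paper bounds the $\asig$-exponent of any nonzero monomial in a fixed $RO(C_2)$-degree uniformly and in fact shows the ideals $(\asig^n)$ are eventually \emph{zero} there. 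For the application in \cref{cor:adamscollapse} your weaker per-bidegree statement suffices, so this is a presentational rather than a mathematical defect; the substantive gap is in part (1).
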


It is not immediately clear from 
\cref{cor:EinftyBSS}
that \cref{lem:rhoBSSevenness} holds, since the stated properties
do not hold in the larger algebra
\begin{equation}
\label{LargerAlgebra}    
\frac{\F_2[\usig^\pm,\asig,v_0,v_1,\ldots,t_0,e_1,e_2,\ldots]}{(t_0^2+\usig e_1,r_1,r_2,\ldots,\asig^{2^{j+1}-1}v_j|j\ge0)}.
\end{equation}
For example, $\frac{\asig}{\usig}$ is an element of degree -1 in the larger algebra. Similarly, the infinite sequence of elements $\frac{\asig^2 }{\usig}e_1$, $\big(\frac{\asig^2 }{\usig}\big)^3e_2$, $\big(\frac{\asig^{2}}{\usig}\big)^7 e_3$, \dots are all in degree 0 in the larger algebra.
However, we give the following straightforward degree arguments.

\begin{proof}
We begin with the proof of item (1). Let $m$ be a monomial in the generators 
\[
    \{\usig^{2^{j+1}k}v_j,\asig,t_0,e_1,e_2,\ldots|j\ge0,k\in\Z\}.
\]  
Using relation (2) in \cref{thm:hilldifferentials}, we may express $m$ in the form
\[
m= r \cdot
\asig^\alpha t_0^\beta
\usig^{2^{n+1}k}v_n
\]
for 
$r$ a monomial in the $v_i$'s and $e_i$'s and for
some $\alpha,\beta,n\ge0$, $k\in\Z$.

We assume now that $m$ lives in degree $j\rho-1$. Writing the degree of an arbitrary element as $c+w\sigma$, we are assuming that $c-w$ is equal to $-1$. Note that the monomial $r$ satisfies $c-w=0$, as the same is true of each $v_i$ and $e_i$. On the other hand, $\asig$ and $t_0$ both satisfy $c-w=1$, while $\usig^{2^{n+1}k}v_n$ satisfies $c-w=2k\cdot 2^{n+1} = k\cdot 2^{n+2}$.
Therefore, we learn that
\[
    \alpha + \beta + k\cdot 2^{n+2}=-1.
\]
Since $\alpha,\beta\ge0$, it follows then that $k<0$ and hence $\alpha+\beta\ge 2^{n+2}-1$. The lemma will follow then from the following claim: 
if $\alpha+\beta\ge 2^{n+2}-2$, then $\asig^\alpha t_0^\beta v_n$ vanishes
in the algebra \eqref{LargerAlgebra}.

We prove the claim by induction on $n$, and the $n=0$ case follows from the relations $\asig v_0=0$ and $t_0^2v_0=\usig e_1v_0=0$. 
Assume then that for all $i<n$, $\asig^\alpha t_0^\beta v_i$ vanishes whenever $\alpha+\beta\ge 2^{i+2}-2$. 
Let $\alpha+\beta\ge 2^{n+2}-2$.
Then either $\alpha \geq 2^{n+1}-1$ or else $\beta \geq 2^{n+1}$. In the former case, the relation $\asig^{2^{n+1}-1}v_n=0$ implies that $\asig^\alpha t_0^\beta v_n$ vanishes.
Write $\alpha+\beta=2^{n+2}-2+l$ for $l\ge 0$ and $\beta=2^{n+1}+r$ for $r\ge0$. Then
\begin{align*}
\asig^\alpha t_0^\beta v_n&=\asig^{2^{n+1}-2+l-r}t_0^{2^{n+1}+r}v_n\\
&=\usig^{2^n}\asig^{2^{n+1}-2+l-r}t_0^{r}e_1^{2^n}v_n\\
&=\usig^{2^n}\asig^{2^{n+1}-2+l-r}t_0^{r}(e_{n+1}v_0+\cdots+e_2^{2^{n-1}}v_{n-1})
\end{align*}
Now since
\[(2^{n+1}-2+l-r)+r\ge 2^{n+1}-2\]
the right hand side is zero by the inductive hypothesis.

Item (2) follows by similar degree arguments. 
In degrees $j\rho+n$, with $j\in\Z$ and $n\geq 0$,
the statement follows from item (1) since any class 
divisible by $\asig^{n+1}$ will be the product of $\asig^{n+1}$ with a class in degree $(j+n+1)\rho-1$.
Suppose then that $w>0$, consider the bidegree 
$j\rho+w\sigma$ for $j\in\Z$,
and suppose that the monomial
\[
    m= r \cdot
\asig^n t_0^\beta 
\usig^{2^{\ell+1}k}v_\ell
\]
is in degree 
$j\rho+w\sigma$
for 
$n$, $\beta$, $\ell\geq 0$, 
$k\in\Z$, and $r$ a monomial in the generators $v_i$ and $e_j$. 
This implies the equation
\[
    w = -n-\beta-2\cdot 2^{\ell+1}k
\]
or in other words
\[
    n+\beta+w=-2^{\ell+2}\cdot k,
\]
which implies $k<0$. 
By the proof of item (1), we must have $n+\beta<2^{\ell+2}-2$, which implies that $w>2^{\ell+2}(-k-1)+2$. If $k\neq -1$, then $w>2^{\ell+2}+2$, so since $w$ is fixed, there are only finitely many possibilities for $\ell$. Again since $n+\beta<2^{\ell+2}-2$, this means there are only finitely many possibilities for $n$, as desired.

It remains to consider the case above when $k=-1$. This gives the equation $n+\beta+w=2^{\ell+2}$ and since $n<2^{\ell+1}-1$, we may write $n=2^{\ell+1}-1-h$ for $h>0$. 
Evaluating the $\sigma$ degree of the above monomial $m$ gives the equation 
\[
    j+w = N -(2^{\ell+1}-1-h)+(2^{\ell+1}+2^{\ell}-1) 
    = N + h + 2^\ell,
\]
where $N\geq 0$ is the $\sigma$-degree of $r$.
Since $j$ and $w$ are fixed, 
there are only finitely many possibilities for 
$N$, $h$, and $\ell$.
It follows that there are only finitely many possibilities for $n$.
\end{proof}

\subsection{Hidden extensions in the $\asig$-BSS}\label{subsec:hiddenextensions}

{Our \cref{cor:EinftyBSS} computes the $E_\infty$-page of the $\asig$-BSS for $BP_\R\wedge\OmRhoSRho$, and therefore determines the $E_2$-page of the Borel Adams SS for $BP_\R\wedge\OmRhoSRho$ as an $\F_2$-vector space. However, we do not attempt to resolve all the extension problems in  the $\asig$-BSS.
For now, we demonstrate how some of these extensions may be resolved using the transchromatic Massey products in 
{$\Ext_{\cE_\bigstar}(\bMC,\bMC)$}
observed by Beaudry--Hill--Shi--Zeng \cite{BHSZtranschromatic}.

Beaudry--Hill--Shi--Zeng studied a family of Massey products in the $E_2$-page of the $\R$-motivic Adams spectral sequence of algebraic cobordism $MGL$. Via Betti realization this determines a corresponding family of Massey products in the $C_2$-equivariant Adams SS for $MU_\R$. Mapping further to the Borel Adams SS of $BP_\R\wedge\OmRhoSRho$, this determines a family of Massey products on the $E_2$-page of (\ref{eq:ASS}). Beaudry--Hill--Shi--Zeng discuss more general families of Massey products than what appears below; we highlight a special case just to illustrate some of the extensions appearing here. The following is obtained by taking $k=l=2^n$, $r=t=0$, and $s=2^{n+1}-1$ in \cite[Theorem 2.10]{BHSZtranschromatic}.

\begin{prop}
    For all $n\ge0$, there is a containment
    \[\asig^{2^{n+1}}v_{n+1}\in\langle v_n,\asig^{2^{n+1}-1},v_n\rangle\subset\Ext_{\mathcal{E}^h_\bigstar}(\bH_\bigstar^h,\bH_\bigstar^h\Omega^\rho S^{\rho+1}).
\]
\end{prop}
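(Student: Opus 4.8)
The plan is to obtain this containment by transporting the transchromatic Massey products of Beaudry--Hill--Shi--Zeng along a chain of realization and ring maps, using the naturality of Massey products at each stage. Note first that the bracket $\langle v_n,\asig^{2^{n+1}-1},v_n\rangle$ is defined, since the $\asig$-torsion relation $\asig^{2^{n+1}-1}v_n=0$ (relation (1) of \cref{thm:hilldifferentials} with $k=0$) makes both consecutive products vanish.

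The first step is to record the output of \cite[Theorem~2.10]{BHSZtranschromatic} under the substitution $k=l=2^n$, $r=t=0$, $s=2^{n+1}-1$. Their result lives on the $E_2$-page of the $\R$-motivic Adams spectral sequence for $MGL$; with these parameters it produces a Massey product of the shape $\langle v_n,\rho^{2^{n+1}-1},v_n\rangle$ (here $\rho$ is the motivic class realizing to $\asig$) together with a distinguished element contained in it.

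The second step is to apply $C_2$-Betti realization, a symmetric monoidal functor carrying $MGL$ to $MU_\R$ and $\rho$ to $\asig$. It induces a map of Adams spectral sequences, hence of $E_2$-pages, and since the image of a Massey product is contained in the Massey product of the images, the containment transports to the $C_2$-equivariant Adams $E_2$-page for $MU_\R$. The third step pushes forward along the composite of the ring map $MU_\R\to BP_\R$ with the unit $S^0\to\OmRhoSRho$ and Borel completion, yielding the map of $E_2$-pages into $\Ext_{\mathcal{E}^h_\bigstar}(\bH_\bigstar^h,\bH_\bigstar^h\Omega^\rho S^{\rho+1})$ of \eqref{eq:ASS}. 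Under this composite the bracket entries $v_n$ and $\asig^{2^{n+1}-1}$ are carried to the classes of the same name, so by naturality once more the distinguished element lands in $\langle v_n,\asig^{2^{n+1}-1},v_n\rangle$ in the target.

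The main obstacle is not the containment itself, which naturality supplies automatically, but rather confirming that the distinguished output class is carried to exactly $\asig^{2^{n+1}}v_{n+1}$. This is genuine bookkeeping: one must reconcile the $\R$-motivic weight grading with the $RO(C_2)$-grading under Betti realization, track the $v_i$ generators along $MGL\to MU_\R\to BP_\R$, and account for the internal degree shift inherent in the Massey product, in order to verify that all three bracket entries and the output are the expected elements in the degrees predicted by \cref{prop:E1identifications}.
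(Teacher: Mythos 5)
Your proposal is correct and follows essentially the same route as the paper: cite \cite[Theorem 2.10]{BHSZtranschromatic} with the parameters $k=l=2^n$, $r=t=0$, $s=2^{n+1}-1$, and transport the resulting containment by naturality of Massey products along the chain from the $\R$-motivic/Real setting to $\Ext_{\mathcal{E}^h_\bigstar}(\bH_\bigstar^h,\bH_\bigstar^h\Omega^\rho S^{\rho+1})$. The paper implements this at the algebraic level, via the maps of Hopf algebroids $(\bH^\R_\bigstar,\mathcal{E}^\R_\bigstar)\to(\bH_\bigstar,\mathcal{E}_\bigstar)\to(\bH^h_\bigstar,\mathcal{E}^h_\bigstar)\to(\bH_\bigstar^h\Omega^\rho S^{\rho+1},\bH_\bigstar^h\Omega^\rho S^{\rho+1}\otimes_{\bH^h_\bigstar}\mathcal{E}^h_\bigstar)$ inducing maps of cobar complexes, rather than your spectrum-level realization maps, but this is the same argument.
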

\begin{proof}
    The maps of Hopf algebroids
    \[(\bH^\R_\bigstar,\mathcal{E}^\R_\bigstar)\to (\bH_\bigstar,\mathcal{E}_\bigstar)\to (\bH^h_\bigstar,\mathcal{E}^h_\bigstar)\to (\bH_\bigstar^h\Omega^\rho S^{\rho+1},\bH_\bigstar^h\Omega^\rho S^{\rho+1}\otimes_{\bH^h_\bigstar}\mathcal{E}^h_\bigstar)\]
    induce maps of dga's between the cobar complexes computing the corresponding Ext groups. The result of Beaudry--Hill--Shi--Zeng gives the corresponding containment in Ext over $(\bH^\R_\bigstar,\mathcal{E}^\R_\bigstar)$, so the claim follows from naturality of Massey products under maps of dga's. 
\end{proof}

This proposition allows us to resolve extension problems in the $\asig$-BSS for $BP_\R\wedge\OmRhoSRho$ using shuffling formulas for Massey products. In fact, using the relations $r_n$ of \cref{prop:E1identifications}, which hold in 
\[
\Ext_{\mathcal{E}^h_\bigstar}(\bH_\bigstar^h,\bH_\bigstar^h\Omega^\rho S^{\rho+1})
\]
via the coproduct formula on $t_n$ in \cref{prop:borelcomodule},
a straightforward induction argument shows that
\begin{equation}
\label{eq:vn_kills_e1power}
e_1^{2^{n+1}-1}v_n=0\in    \Ext_{\mathcal{E}^h_\bigstar}(\bH_\bigstar^h,\bH_\bigstar^h\Omega^\rho S^{\rho+1}).
\end{equation}
This implies that the Massey product $\langle \asig^{2^{n+1}-1},v_n,e_1^{2^{n+1}-1}\rangle\subset \Ext_{\mathcal{E}^h_\bigstar}(\bH_\bigstar^h,\bH_\bigstar^h\Omega^\rho S^{\rho+1})$ is defined, and this yields the following hidden multiplications.

\begin{prop}\label{prop:hiddenextensions}
    For any element
    \[x\in \langle \asig^{2^{n+1}-1},v_n,e_1^{2^{n+1}-1}\rangle\subset \Ext_{\mathcal{E}^h_\bigstar}(\bH_\bigstar^h,\bH_\bigstar^h\Omega^\rho S^{\rho+1}),\]
    one has the relation
    \[v_n\cdot x = \asig^{2^{n+1}} v_{n+1}e_1^{2^{n+1}-1}\in \Ext_{\mathcal{E}^h_\bigstar}(\bH_\bigstar^h,\bH_\bigstar^h\Omega^\rho S^{\rho+1}).\]
\end{prop}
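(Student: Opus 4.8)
The plan is to deduce the stated relation from the standard shuffling formula for triple Massey products, fed by the containment $\asig^{2^{n+1}}v_{n+1}\in\langle v_n,\asig^{2^{n+1}-1},v_n\rangle$ from the preceding proposition. Set $E=\Ext_{\mathcal{E}^h_\bigstar}(\bH_\bigstar^h,\bH_\bigstar^h\Omega^\rho S^{\rho+1})$. First I would confirm that the two relevant brackets are defined. The bracket $\langle v_n,\asig^{2^{n+1}-1},v_n\rangle$ needs $v_n\cdot\asig^{2^{n+1}-1}=0$, which is the $\asig$-torsion relation of \cref{thm:hilldifferentials}, and $\langle \asig^{2^{n+1}-1},v_n,e_1^{2^{n+1}-1}\rangle$ needs in addition $v_n\cdot e_1^{2^{n+1}-1}=0$, which is \eqref{eq:vn_kills_e1power}. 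Both conditions hold, so both brackets are defined.

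Next I would invoke the shuffling relation, which over $\F_2$ gives an equality of cosets
\[
v_n\cdot\langle \asig^{2^{n+1}-1},v_n,e_1^{2^{n+1}-1}\rangle
=\langle v_n,\asig^{2^{n+1}-1},v_n\rangle\cdot e_1^{2^{n+1}-1}
\]
in $E$. Since $\asig^{2^{n+1}}v_{n+1}$ lies in the bracket on the right by the preceding proposition, the right-hand coset contains $\asig^{2^{n+1}}v_{n+1}e_1^{2^{n+1}-1}$, so this element lies in the left-hand coset as well.

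The decisive observation is that the left-hand coset is in fact a single point, which simultaneously establishes the equality and the independence of the choice of $x$. Indeed, the indeterminacy of $\langle \asig^{2^{n+1}-1},v_n,e_1^{2^{n+1}-1}\rangle$ is $\asig^{2^{n+1}-1}E+E\,e_1^{2^{n+1}-1}$; multiplying by $v_n$ annihilates the first summand because $v_n\asig^{2^{n+1}-1}=0$ and annihilates the second because $v_n e_1^{2^{n+1}-1}=0$ by \eqref{eq:vn_kills_e1power} and commutativity of $E$. Hence $v_n\cdot\langle \asig^{2^{n+1}-1},v_n,e_1^{2^{n+1}-1}\rangle$ is a singleton, so $v_n\cdot x$ is independent of the representative $x$ and equals the element $\asig^{2^{n+1}}v_{n+1}e_1^{2^{n+1}-1}$ produced above. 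I expect this last indeterminacy computation to be the only real subtlety: producing \emph{some} representative with the correct product is immediate from shuffling and the preceding proposition, whereas the strength of the statement---that the product is the same for \emph{every} $x$---rests on both relations conspiring to kill each summand of the indeterminacy after multiplication by $v_n$.
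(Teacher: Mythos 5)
Your proposal is correct and follows essentially the same route as the paper: shuffle $v_n\langle \asig^{2^{n+1}-1},v_n,e_1^{2^{n+1}-1}\rangle=\langle v_n,\asig^{2^{n+1}-1},v_n\rangle e_1^{2^{n+1}-1}$, feed in the containment $\asig^{2^{n+1}}v_{n+1}\in\langle v_n,\asig^{2^{n+1}-1},v_n\rangle$, and kill the indeterminacy using $\asig^{2^{n+1}-1}v_n=0$ and \eqref{eq:vn_kills_e1power}. The only (immaterial) difference is that you show the left-hand coset is a singleton by annihilating the indeterminacy of $\langle \asig^{2^{n+1}-1},v_n,e_1^{2^{n+1}-1}\rangle$ after multiplying by $v_n$, whereas the paper annihilates the indeterminacy of $\langle v_n,\asig^{2^{n+1}-1},v_n\rangle$ (which consists of $v_n$-multiples) after multiplying by $e_1^{2^{n+1}-1}$; since the two cosets coincide by shuffling, either suffices.
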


Note that the elements $v_i$ come from $\Ext_{\mathcal{E}^h_\bigstar}(\bH_\bigstar^h,\bH_\bigstar^h)$, and the elements $e_i$ are well-defined elements of $\Ext^0$, as they are comodule primitives by \cref{prop:borelcomodule}.

\begin{proof}
One has the shuffling formula
\[
    v_n x\in v_n\langle \asig^{2^{n+1}-1},v_n,e_1^{2^{n+1}-1}\rangle=
    \langle v_n,\asig^{2^{n+1}-1},v_n\rangle e_1^{2^{n+1}-1},
\]
{and the right-hand product contains $\asig^{2^{n+1}} v_{n+1}e_1^{2^{n+1}-1}$.}
It therefore remains to show that the set $\langle v_n,\asig^{2^{n+1}-1},v_n\rangle e_1^{2^{n+1}-1}$ is a singleton. The indeterminacy of the Massey product $\langle v_n,\asig^{2^{n+1}-1},v_n\rangle$ 
{consists of $v_n$-multiples. But these are}
killed by $e_1^{2^{n+1}-1}$ via the relation 
\eqref{eq:vn_kills_e1power}, completing the proof. 
\end{proof}

The following hidden extension is displayed in our weight 2 chart in \cref{fig:Snaith0and2}. 

\begin{example}
        In the ring
    \[\Ext_{\mathcal{E}^h_\bigstar}(\bH_\bigstar^h,\bH_\bigstar^h\Omega^\rho S^{\rho+1})\]
    {we will deduce} the equation
    \[v_0\cdot t_0^2=\asig^2 v_1e_1.\]
    Indeed this follows from the above proposition along with the fact that 
    \[t_0^2\in\langle \asig,v_0,e_1\rangle.\]
    This fact, in turn, follows from the fact that in the cobar complex computing   
   \[
   \Ext_{\mathcal{E}^h_\bigstar}(\bH_\bigstar^h,\bH_\bigstar^h\Omega^\rho S^{\rho+1})
   \]
   one has the differentials
   \begin{align*}
       d(\usig)&=\asig v_0\\
       d(t_1)&=v_0 e_1,
   \end{align*}
   which follow from the formula for $\eta_R(u_\sigma)$
   and the coaction on $t_1$ from \cref{prop:borelcomodule}, respectively.
   {Recall from \cref{hlgyOmRhoSRho1} that in $\bH_\bigstar^h \Omega^\rho S^{\rho+1}$, we have the relation $t_0^2 = \usig e_1 + \asig t_1$.}
\end{example}

\begin{example}
    In the ring
    \[\Ext_{\mathcal{E}^h_\bigstar}(\bH_\bigstar^h,\bH_\bigstar^h\Omega^\rho S^{\rho+1})\]
    one has the relation 
        \[v_1\cdot (e_1t_0^4+\asig^2 e_2t_0^2)=\asig ^4v_2 e_1^3,\]
        which comes from a containment
    \[e_1t_0^4+\asig^2 e_2t_0^2\in\langle \asig^3,v_1,e_1^3\rangle.\]
    The proof of this containment proceeds exactly as in the previous example, and we leave the details to the interested reader.
\end{example}

We expect that one may identify explicit elements in the brackets 
\[\langle \asig^{2^{n+1}-1},v_n,e_1^{2^{n+1}-1}\rangle\]
for all $n$ as above, which yield more hidden extensions by \cref{prop:hiddenextensions}. However, since we will not systematically solve all extension problems, we do not pursue this further. 

\section{The {Borel} Adams spectral sequence}

The $E_\infty$-page of the $\asig$-Bockstein spectral sequence for $BP_\R\wedge\OmRhoSRho$ was described in \cref{cor:EinftyBSS}. 
This gives the associated graded, with respect to the $\asig$-filtration, of $\Ext_{\mathcal{E}^h_\bigstar}(\bH_\bigstar^h,\bH_\bigstar^h\Omega^\rho S^{\rho+1})$, which is the $E_2$-term for the Borel Adams spectral sequence for $BP_\R\wedge\OmRhoSRho$. We now show that there are no nonzero Adams differentials.

\begin{cor}\label{cor:adamscollapse}
    The Borel Adams SS for $BP_\R\wedge \OmRhoSRho$ collapses on the $E_2$-page.
\end{cor}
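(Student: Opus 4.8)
The plan is to exploit the multiplicative structure of the Borel Adams spectral sequence \eqref{eq:ASS}. Since $\Omega^\rho S^{\rho+1}$ is a $\rho$-fold loop space, $\Sigma^\infty_{C_2}\OmRhoSRho$ is a ring spectrum, so $BP_\R\wedge\OmRhoSRho$ is a homotopy ring spectrum and \eqref{eq:ASS} is a spectral sequence of algebras whose differentials obey the Leibniz rule. It therefore suffices to produce a set of \emph{algebra} generators of the $E_2$-page that are permanent cycles. By \cref{cor:EinftyBSS} the associated graded $\mathrm{gr}_{\asig}\Ext_{\mathcal{E}^h_\bigstar}(\bH_\bigstar^h,\bH_\bigstar^h\Omega^\rho S^{\rho+1})$ is generated as an algebra by $\asig$, $t_0$, the $e_i$, and the classes $\usig^{2^{j+1}k}v_j$. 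Here \cref{lem:rhoBSSevenness}(2) plays an essential role: the stabilization of the ideals $\{(\asig^n)\}$ in each degree guarantees that the $\asig$-adic filtration on the $E_2$-page is complete, which is what allows the associated-graded generators of \cref{cor:EinftyBSS} to be lifted to honest algebra generators of the $E_2$-page itself. The collapse will then follow once each of these is shown to be an Adams permanent cycle.

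First I would dispose of the generators coming from the coefficients. The classes $\asig$ and $\usig^{2^{j+1}k}v_j$ lie in the image of the unit map $BP_\R=BP_\R\wedge S^0\to BP_\R\wedge\OmRhoSRho$ induced by $S^0\to\OmRhoSRho$. By \cref{thm:hilldifferentials}, together with the Greenlees--Meier computation of $\pi_\bigstar BP_\R$, the Borel Adams spectral sequence for $BP_\R$ itself collapses, so these classes are permanent cycles there; by naturality of the Adams spectral sequence under the unit map they remain permanent cycles in \eqref{eq:ASS}. (Compatibly, they are permanent cycles in the $\asig$-Bockstein spectral sequence by \cref{cor:permanentcycles}.)

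The remaining generators $t_0$ and $e_i$ lie in Adams filtration zero: by \cref{prop:borelcomodule} they are $\mathcal{E}^h_\bigstar$-comodule primitives and so lift to $\Ext^0$. A class of $\Ext^0$ is a permanent cycle exactly when it lies in the image of the Hurewicz map $\pi_\bigstar(BP_\R\wedge\OmRhoSRho)^{\widehat{}}_2\to\bH_\bigstar^h\Omega^\rho S^{\rho+1}$, so it is enough to realize $t_0$ and $e_i$ by genuine homotopy classes. The class $t_0$ is the image of the generator of $\pi_\bigstar(BP_\R\wedge S^1)$ under the bottom Snaith cell $\Pow^1_\rho S^1=S^1\to\OmRhoSRho$, and is therefore spherical. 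For the $e_i$ I would use that, under the equivalence $\bH\wedge\OmRhoSRho\simeq\bH\wedge\bH$ of \cref{hlgyOmRhoSRho1}, $e_i$ corresponds to $\overline{\exi_i}$, which by \cref{lemma:xicomesfromMUR} originates from the Real orientation of $MU_\R$; this should exhibit $e_i$ as the reduction of a genuine homotopy class and hence as a permanent cycle. With every algebra generator a permanent cycle, the Leibniz rule forces $d_r\equiv 0$ for all $r\ge 2$, giving the collapse.

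The hard part will be this last step, namely a clean verification that each $e_i$ is in the $BP_\R$-Hurewicz image, i.e.\ that \emph{all} filtration-zero comodule primitives are realized by homotopy; the reductions to $MU_\R$ require some care because the equivalence of \cref{hlgyOmRhoSRho1} is only established after smashing with $\bH$. Throughout I expect the Snaith-weight grading of \cref{cor:HlgySnaithSplitting} and \cref{prop:snaithweights} to be useful organizing data: Adams differentials preserve Snaith weight, so the problem decomposes over the finite spectra $BP_\R\wedge\Pow^r_\rho S^1$ one weight at a time, and the even/odd splitting $BP_\R\wedge\OmRhoSRho\simeq BP_\R\wedge(L_{ev}\vee\Sigma L_{ev})$ reduces the whole argument to the even summand. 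Finally, \cref{lem:rhoBSSevenness}(1) — the vanishing of the $E_\infty$-page in every degree $j\rho-1$ — records the complementary fact that the abutment is concentrated away from these odd degrees, which is the consistency statement one wants alongside the collapse and which feeds into the strong-evenness conclusions downstream.
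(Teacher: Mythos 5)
Your overall strategy matches the paper's: the Borel Adams spectral sequence is multiplicative, so it suffices to show that a set of algebra generators of $E_2$ consists of permanent cycles, and you handle $\asig$ and the $\usig^{2^{j+1}k}v_j$ (via the unit map and the collapse for $BP_\R$ itself), $t_0$ (via the bottom Snaith cell $S^1$), and the passage from associated-graded generators to actual generators (via \cref{lem:rhoBSSevenness}(2) and the finite-filtration surjectivity argument) exactly as the paper does.

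The genuine gap is your treatment of the $e_i$. The paper disposes of these by a one-line degree argument: a differential on $e_i \in \Ext^0$ in degree $(2^i-1)\rho$ would land in total degree $(2^i-1)\rho - 1$, and by \cref{lem:rhoBSSevenness}(1) the $E_2$-page vanishes in all degrees of the form $j\rho-1$ (the associated graded vanishes there, hence so does $E_2$). You cite \cref{lem:rhoBSSevenness}(1) only at the very end as a ``consistency statement,'' without noticing that it is precisely the tool that kills all possible targets of differentials on the $e_i$. Your proposed substitute --- realizing each $e_i$ in the Hurewicz image via $MU_\R$ and \cref{lemma:xicomesfromMUR} --- does not go through as stated, for the reason you yourself flag: the identification of $e_i$ with $\overline{\exi_i}$ is only available after smashing with $\bH$, so there is no map of spectra along which to transport a homotopy class from ${MU_\R}_\bigstar MU_\R$ into $\pi_\bigstar(BP_\R\wedge\OmRhoSRho)$. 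Worse, the direction of implication in the paper is the reverse of what you want: the fact that the $e_i$ lift to $BP_\R$-homology classes is deduced in \cref{prop:resinjective} \emph{from} \cref{cor:adamscollapse} (together with \cref{cor:permanentcycles}), so attempting to establish the Hurewicz realization first would either be circular or require a substantial new argument. Replacing your last step with the degree argument from \cref{lem:rhoBSSevenness}(1) completes the proof.
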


\begin{proof}
    The class $t_0$ is the fundamental class for the Snaith summand $\Pow^1_\rho S^1\simeq S^1$, and so it is a permanent cycle in the Borel Adams SS for $BP_\R\wedge \OmRhoSRho$.
    The classes $\asig$ and $\usig^{2^{n+1}k}v_n$ are all permanent cycles since they come from $BP_\R$, as the Borel Adams SS for $BP_\R$ collapses on the $E_2$-page. By item (1) of \cref{lem:rhoBSSevenness}, 
    the $\asig$-adic associated graded of $E_2$ vanishes in stems of the form $j\rho-1$, hence 
    the $E_2$ page also vanishes and thus the $e_i$'s are permanent cycles in the Borel Adams SS for degree reasons. These classes generate the $\asig$-adic associated graded of the $E_2$-page as an algebra by \cref{cor:EinftyBSS}.
    
Giving $\asig$ filtration 1 and all other generators filtration $0$ in the ring 
\[\F_2[\usig^{2^{j+1}k}v_j,\asig,t_0,e_1,e_2,\ldots|j\ge0,k\in\Z],
\]
we therefore have a map of filtered rings
\[\F_2[\usig^{2^{j+1}k}v_j,\asig,t_0,e_1,e_2,\ldots|j\ge0,k\in\Z]\to E_2\]
with the property that it is surjective on associated graded. A map of filtered abelian groups that induces a surjection on associated graded is not automatically a surjection, but a diagram chase shows that this is true when the target has only finitely many nonzero filtration quotients. By item (2) of \cref{lem:rhoBSSevenness}, $E_2$ has the property that in any fixed bidegree there are only finitely many nonzero filtration quotients, hence we see that the above map is surjective. Surjectivity implies that $E_2$ is generated as an algebra by permanent cycles, completing the proof.
\end{proof}

As noted previously in \cref{rmk:hiddenextns}, we have considered some, but not all, of the hidden extensions in the Adams spectral sequence.

\subsection{Consequences for $BP_\R\wedge\OmRhoSRho$}
We may group the Snaith splitting into an even-odd decomposition
\[
    \Sigma^\infty_{C_2} \OmRhoSRho \simeq L_{ev}\vee L_{odd}
\]
where $L_{ev}$ is the wedge of all the even weight summands and $L_{odd}$ is the wedge of all the odd weight summands. The following proposition tells us we only need to determine ${BP_\R}_\bigstar L_{ev}$ to determine ${BP_\R}_\bigstar \Omega^{\rho}S^{\rho+1}$.

\begin{proposition}\label{prop:evenoddsplit}
    The map
    \[t_0:\Sigma BP_\R\wedge L_{ev}\to BP_\R\wedge L_{odd}\]
    is an equivalence of $BP_\R$-modules.
\end{proposition}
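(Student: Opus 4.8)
The plan is to prove the equivalence by reducing to the underlying nonequivariant statement, using Borel-completeness to detect equivalences. First I would record that both $\Sigma BP_\R\wedge L_{ev}$ and $BP_\R\wedge L_{odd}$ are Borel-complete: exactly as in the proof of \cref{prop:borelcomplete}, each Snaith summand $BP_\R\wedge \Pow^r_\rho S^1$ is Borel-complete (since $\Pow^r_\rho S^1$ is finite and $BP_\R$ is Borel-complete), and the even and odd wedges have strictly increasing connectivity, so each wedge agrees with the corresponding product and is therefore Borel-complete. The map $t_0$ is a map of $BP_\R$-modules between Borel-complete $C_2$-spectra, and such a map is an equivalence if and only if its underlying map $\Res(t_0)$ is an equivalence of nonequivariant spectra: a Borel-complete spectrum is recovered from its underlying spectrum-with-$C_2$-action as $F({EC_2}_+,-)$, and this functor carries underlying equivalences to equivalences.

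Next I would identify the underlying map. Since $\Res BP_\R\simeq BP$ and restriction carries the equivariant Snaith splitting to the classical Snaith splitting of $\Omega^2 S^3$ with Snaith weights preserved, the spectra $\Res L_{ev}$ and $\Res L_{odd}$ are the classical even and odd wedges $L_{ev}^{\mathrm{cl}}$ and $L_{odd}^{\mathrm{cl}}$, and $\Res(t_0)$ is multiplication by the underlying weight-one class $x_0\in BP_1\Omega^2 S^3$. Thus $\Res(t_0)$ is precisely the classical map $x_0\cdot\colon \Sigma BP\wedge L_{ev}^{\mathrm{cl}}\to BP\wedge L_{odd}^{\mathrm{cl}}$.

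Finally I would show this classical map is a $\pi_*$-isomorphism using Ravenel's computation \eqref{RavenelComputation}. There $BP_*\Omega^2 S^3$ is free as a module over $R:=BP_*[y_1,y_2,\dots]/(r_1,r_2,\dots)$ on the basis $\{1,x_0\}$, because $x_0^2=0$; since the $y_n$ have even Snaith weight and $x_0$ has weight one, the even-weight part is exactly $R$ and the odd-weight part is $x_0\cdot R$. As $x_0$ is part of a free-module basis, multiplication by $x_0$ is an isomorphism from the even part onto the odd part, so $\Res(t_0)$ is an isomorphism on $\pi_*=BP_*(-)$ and hence an equivalence. By the Borel-completeness detection above, $t_0$ is therefore an equivalence of $BP_\R$-modules.

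The main obstacle is bookkeeping rather than conceptual: one must check that restriction identifies the equivariant map with the classical one on the nose, including the matching of Snaith weights, and that the weight-one class really is exterior and a free-module generator so that multiplying by it carries the even part isomorphically onto the odd part. An alternative, fully equivariant route avoids invoking Ravenel's theorem: since $t_0$ is a comodule primitive (\cref{prop:A-comod0rhoSrho+1}) and $\bH^h_\bigstar\Omega^\rho S^{\rho+1}\cong \bMC^h[t_0,t_1,\dots]^{\,\widehat{}}_{\asig}$ is a (completed) polynomial algebra after using $\usig$-invertibility to eliminate the $e_k$ via $t_n^2=\asig t_{n+1}+\usig e_{n+1}$, multiplication by $t_0$ is an isomorphism of $\mathcal{E}^h_\bigstar$-comodules from the even to the odd weight part; this yields an isomorphism on Borel Adams $E_2$-pages, and convergence together with \cref{cor:adamscollapse} then gives the equivalence.
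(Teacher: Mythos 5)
Your proposal is correct and follows essentially the same route as the paper: the paper's proof is precisely the observation that this is a map of Borel-complete $C_2$-spectra whose underlying map is an equivalence by Ravenel's Theorem~C. You have simply expanded the citation by spelling out why multiplication by $x_0$ is an isomorphism from the even to the odd part of $BP_*\Omega^2 S^3$, which is a reasonable unpacking rather than a different argument.
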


\begin{proof}
    This is a map between Borel-complete $C_2$-spectra that induces an equivalence on the underlying spectra by \cite[Theorem C]{Ravenel93}.
\end{proof}

\begin{proposition}\label{prop:resinjective}
    The restriction map
    \[\mathrm{res}:(BP_\R)_{*\rho}\Omega^\rho S^{\rho+1}\to BP_{2*}\Omega^2S^3\]
    is an isomorphism. Moreover, the $C_2$-spectrum $BP_\R\wedge L_{ev}$ is strongly even.
\end{proposition}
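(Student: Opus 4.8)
The plan is to deduce the entire proposition from the single assertion that $BP_\R \wedge L_{ev}$ is strongly even, where strong evenness means that $\piC_{k\rho - 1}(BP_\R \wedge L_{ev}) = 0$ for all $k$ and that the restriction map $\piC_{k\rho}(BP_\R \wedge L_{ev}) \to \pi^e_{2k}(BP \wedge L_{ev})$ is an isomorphism. Granting this, the displayed restriction isomorphism follows formally. The Snaith splitting gives
\[
    \piC_{k\rho}(BP_\R \wedge \OmRhoSRho) = \piC_{k\rho}(BP_\R \wedge L_{ev}) \oplus \piC_{k\rho}(BP_\R \wedge L_{odd}),
\]
and by \cref{prop:evenoddsplit} the second summand is $\piC_{k\rho}(\Sigma BP_\R \wedge L_{ev}) = \piC_{k\rho - 1}(BP_\R \wedge L_{ev}) = 0$. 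Restricting the same splitting to underlying spectra, and using Ravenel's evenness of $BP \wedge L_{ev}$ to see that $\pi^e_{2k}(BP \wedge L_{odd}) = \pi^e_{2k-1}(BP \wedge L_{ev}) = 0$, identifies $BP_{2k}\Omega^2 S^3$ with $\pi^e_{2k}(BP \wedge L_{ev})$. Thus the restriction map in the statement is exactly the isomorphism furnished by strong evenness.

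First I would dispose of the two easy halves of strong evenness. Underlying evenness of $\Res(BP_\R \wedge L_{ev}) = BP \wedge L_{ev}$, namely the vanishing of $\pi^e_{2k-1}$, is precisely the classical statement recalled in the introduction that $BP \wedge L_{ev}$ has homotopy concentrated in even degrees. For the vanishing $\piC_{k\rho - 1}(BP_\R \wedge L_{ev}) = 0$, I would invoke \cref{cor:adamscollapse}: since the Borel Adams spectral sequence collapses, the group $\piC_{k\rho - 1}(BP_\R \wedge \OmRhoSRho)$ has associated graded isomorphic to the $E_2$-term in degree $k\rho-1$, whose own $\asig$-adic associated graded is the $E_\infty$-page of the $\asig$-Bockstein spectral sequence. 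The latter vanishes in degrees $k\rho - 1$ by \cref{lem:rhoBSSevenness}(1), and since \cref{lem:rhoBSSevenness}(2) guarantees that the $\asig$-adic filtration has only finitely many nonzero quotients in each degree, this forces $\piC_{k\rho-1}(BP_\R \wedge \OmRhoSRho) = 0$. The wedge summand $\piC_{k\rho-1}(BP_\R \wedge L_{ev})$ then vanishes as well.

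The remaining, and main, step is the restriction isomorphism in degrees $k\rho$. Here I would compare the Borel $C_2$-equivariant Adams spectral sequence for $BP_\R \wedge \OmRhoSRho$ with the classical Adams spectral sequence for $BP \wedge \Omega^2 S^3$ along the restriction functor, both of which collapse (by \cref{cor:adamscollapse} and Ravenel's computation). On $E_\infty$-pages, restriction acts by setting $\asig = 0$ and $\usig = 1$, as is visible from the formulas of \cref{UnderlyingA}. I would then verify that, in degrees $k\rho$, this map carries the generators $t_0$, $e_i$, and $\usig^{2^{j+1}k}v_j$ of \cref{cor:EinftyBSS} to Ravenel's generators $x_0$, $y_i$, and $v_j$, with matching relations, and that every $\asig$-divisible class—which dies under restriction—contributes trivially to the $k\rho$-graded part; this makes the induced map a bijection onto the nonequivariant $E_\infty$-page in these degrees. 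Finally I would promote this isomorphism of associated gradeds to the asserted isomorphism of homotopy groups, again using the finiteness of the $\asig$-adic filtration from \cref{lem:rhoBSSevenness}(2), so that a filtered map inducing an isomorphism on associated graded is itself an isomorphism.

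The hard part will be this explicit $E_\infty$-comparison: matching the equivariant generators $e_i, t_0, v_j$ against Ravenel's $y_i, x_0, v_j$ under the ``$\asig = 0,\ \usig = 1$'' map requires keeping track of the relation $t_0^2 = \usig e_1 + \asig t_1$ from \cref{hlgyOmRhoSRho1} alongside the nonequivariant relation $x_0^2 = 0$, and confirming that no class in a $k\rho$-degree is spuriously created or destroyed upon restriction. All of the required bookkeeping—regularity of the relations $r_i$, and the absence of extraneous low-filtration classes—has already been packaged into the structural results of the preceding section, so the argument amounts to organizing these inputs rather than new computation.
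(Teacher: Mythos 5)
Your proposal is correct in outline but routes the key step through a much heavier argument than the paper uses, and that key step is only sketched. Your treatment of the two ``easy'' halves --- underlying evenness from Ravenel, and the vanishing of $\piC_{j\rho-1}$ from \cref{lem:rhoBSSevenness}(1) together with \cref{cor:adamscollapse} --- matches the paper. The divergence is in how the restriction isomorphism is obtained. The paper splits it into two cheap pieces: injectivity follows from the exact sequence $\ker(\mathrm{res})=\mathrm{image}(\asig)$ together with the vanishing $(BP_\R)_{n\rho+\sigma}\Omega^\rho S^{\rho+1}=(BP_\R)_{(n+1)\rho-1}\Omega^\rho S^{\rho+1}=0$; surjectivity follows because $\mathrm{res}$ is a ring map and Ravenel's generators $y_n$, being detected in mod $2$ homology by classes that lift to the equivariant classes $e_n$, lift to $BP_\R$-homology by \cref{cor:permanentcycles} and \cref{cor:adamscollapse}. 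You instead propose a full comparison of $E_\infty$-pages of the Borel equivariant and classical Adams spectral sequences under restriction, matching generators and relations degree by degree. This would work --- in degree $k\rho$ the vanishing in degrees $(k+1)\rho-1$ does kill all $\asig$-divisible classes, and the $\asig$-free monomials of \cref{cor:EinftyBSS} in those degrees do biject with Ravenel's even-degree monomials after using $t_0^2=\usig e_1$ --- but it requires setting up the map of Adams spectral sequences induced by restriction and carrying out the generator/relation bookkeeping, none of which you actually do, and all of which the paper's surjectivity argument sidesteps. Note that once injectivity is in hand (which you essentially already have from the $j\rho-1$ vanishing), surjectivity onto a ring generated by liftable generators is free; this is the economy you are missing.

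One technical caveat in your final step: promoting an isomorphism of Adams $E_\infty$-pages to an isomorphism of homotopy groups is not justified by the finiteness of the $\asig$-adic filtration from \cref{lem:rhoBSSevenness}(2). That finiteness governs the passage from the $\asig$-Bockstein $E_\infty$-page to the Adams $E_2$-page; the passage from Adams $E_\infty$ to homotopy is governed by the Adams filtration, which has infinitely many nonzero stages in a fixed degree (the $v_0$-towers), so you must instead invoke strong convergence of both spectral sequences to the $2$-completions and a completeness argument. This is standard but is a different mechanism than the one you cite.
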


Recall that a $C_2$-spectrum $X$ is {\bf strongly even} if, for all $n\in\Z$, the homotopy Mackey functors $\underline{\pi}_{n\rho}X$ are  constant (or equivalently have bijective restriction maps) and the Mackey functors $\underline{\pi}_{n\rho-1}X$ vanish.

\begin{proof}
    Fix $n\in \Z$.
    We first show that the restriction map is injective. By the relation $\mathrm{image}(\asig)=\ker(\mathrm{res})$, since $|\asig|=-\sigma$, it suffices to show that 
    \[(BP_\R)_{n\rho+\sigma}\Omega^\rho S^{\rho+1}=0,\]
    which follows directly from item (1) of \cref{lem:rhoBSSevenness}. 
    To see that $\mathrm{res}$ is surjective, note that $\mathrm{res}$ is a ring map, and so it suffices to show that the $BP_*$-algebra generators $y_n$ of $BP_{2*}\Omega^2S^3$ (see \cref{RavenelComputation}) lift to $(BP_\R)_{*\rho}\Omega^\rho S^{\rho+1}$. These classes are detected in $H\F_2$-homology, and according to the proof of \cite[Theorem~4.1]{BW}, the homology classes $y_n$ lift to equivariant homology classes $e_n$.
    By \cref{cor:permanentcycles} and \cref{cor:adamscollapse}, these $\bH$-homology classes lift to $BP_\R$-homology classes.
 
To see that $BP_\R\wedge L_{ev}$ is strongly even, note that 
\[
    (BP_\R)_{n\rho-1}\Omega^\rho S^{\rho+1}=(BP_\R)_{(n-1)\rho+\sigma}\Omega^\rho S^{\rho+1}=0,
\]
so $(BP_\R)_{n\rho-1}L_{ev}$ vanishes, as it is a retract. 
Moreover, 
the spectrum $BP \wedge \Res L_{ev}$ is even, as follows from \cite[Theorem C]{Ravenel93}, hence $\underline{\pi}_{n\rho-1}(BP_\R\wedge L_{ev})=0$. 
Since $\underline{\pi}_{n\rho}(BP_\R\wedge L_{ev})$ is a retract of $\underline{\pi}_{n\rho}(BP_\R\wedge \Omega^\rho S^{\rho+1})$, and the restriction map is an isomorphism in the latter, it is also an isomorphism in the former.
\end{proof}

\begin{rmk}
    \cref{prop:resinjective} gives a calculation of $(BP_\R)_{*\rho}\Omega^\rho S^{\rho+1}$,
    relative to the calculation of $BP_*\Omega^2 S^3$ given in
    \cite[Theorem C]{Ravenel93}. 
    {However, we note that Ravenel only computes $BP_*\Omega^2S^3$ up to extensions in the Adams SS.} 
    \cref{prop:resinjective} and \cref{prop:evenoddsplit} together say that $BP_\R\wedge \OmRhoSRho$ splits as a sum of a strongly even $C_2$-spectrum and the suspension of a strongly even $C_2$-spectrum.
\end{rmk}

\section{Proof of \cref{lemma:integraldomain}}
\label{ProofLemma99}

Here, we give the delayed argument for the technical \cref{lemma:integraldomain}. Again, the argument is adapted from the proof of \cite[Lemma 4.15(b)]{RavenelWilson77}. We state it for convenience of the reader.

\LemmaNineNine*

\begin{proof}
    The ring $C_k$ is of the form $S_k[\asig]$, so 
    it suffices to prove the claim instead for the ring
        \[
            S_k:=\frac{\F_2[\usig^{\pm 2^k},v_{k},\ldots,t_0,e_1,e_2,\ldots]}{(t_0^{2^{k+1}}+\usig^{2^k}e_1^{2^{k}},\overline{r_{k+2}},\ldots)}.
        \]
For notational convenience, we will further replace $S_k$ with the ring
\begin{equation*}\label{eq:Rknring}
    R_{k}:=\F_2[w^\pm,v_0,v_1,\ldots,e_0,e_1,e_2,\ldots]/(r_{1,k},r_{2,k},r_{3,k},\ldots),
\end{equation*}
where $r_{1,k}=e_0^{2^{k+1}}+we_1^{2^k}$ and for $n>1$,
\[
r_{n,k}=\sum\limits_{i=0}^{n-1}v_ie_{n-i}^{2^{i+k}};
\]
we note that there is a ring isomorphism $S_k\cong R_{k}$ sending $v_i\mapsto v_{i-k}$, $t_0\mapsto e_0$, $\usig^{2^k}\mapsto w$, and $e_i\mapsto e_i$. For the purpose of making inductive degree arguments, we regard $R_k$ as a singly graded ring with $|w|=0$, $|v_j|=2(2^{j+k}-1)$ for all $j\ge0$, $|e_i|=2(2^i-1)$ for all $i>0$, and $|e_0|=1$.

To prove that $R_k$ is a domain, we follow \cite[Theorem~4.15(b)]{RavenelWilson77}. 
We write 
\[
    A=\F_2[w^\pm,v_0,v_1,\ldots,e_0,e_1,e_2,\ldots],
\]    
and we will show that the ideal $(r_{1,k},r_{2,k},\dots)\subset A$ is prime. We fix $n\ge 0$ 
and define the quotient ring 
\[
    A_i=A/(e_0,e_1,\ldots,e_{n-i}) \qquad \text{for $i\le n$}
\]
 and $A_{n+1}=A$. We let $J_i\subset A_i$ be the ideal $J_i=(r_{n,k},r_{n-1,k},\ldots,r_{n-i+1,k})$ for $i\le n$, and let $B_i=A_i[e_{n-i+1}^{-1}]$ for $i\le n+1$. 
 
 We first prove by induction that, for all $i\le n$, the ideal $J_i$ is regular in $A_i$. In the base case $i=1$, we have that
\[A_1=\F_2[w^\pm,v_0,v_1,\ldots,e_n,e_{n+1},\ldots]\]
is a domain, and $J_1=(r_{n,k})$ is a principal ideal generated by the nonzero element $r_{n,k}\equiv e_n^{2^k}v_0\in A_1$, so the claim follows. We fix an $i\le n$ and assume by induction that $J_{i-1}$ is regular in $A_{i-1}$. Via the short exact sequence
\[0\to A_i\xrightarrow{e_{n-i+1}}A_i\to A_{i-1}\to0\]
and the fact that $e_{n-i+1}$ has positive degree, a straightforward induction argument on degree shows that $J_{i-1}$ is also regular in $A_i$. If we can show that $J_{i-1}$ is prime in $A_i$, 
then the induction step will follow
once we observe that $r_{n-i+1,k}$ is nonzero in  $A_i/J_{i-1}$, since $A_i/J_{i-1}$ will be a domain. But we have that $A_i/J_{i-1}$ agrees with $A_i$ in the degree of $r_{n-i+1,k}$, and $r_{n-i+1,k}\equiv v_0e_{n-i+1}^{2^k}\in A_i$ when $i<n$ and $r_{1,k}\equiv we_1^{2^k}\in A_n$. So $r_{n-i+1,k}$ is nonzero in $A_i/J_{i-1}$.

To see that $J_{i-1}$ is prime in $A_i$, we note first that it is prime in $B_i$. Indeed, we have that $B_i\cong \F_2[w^\pm,v_0,v_1,\ldots,e_{n-i+1}^{\pm},e_{n-i+2},\ldots]$, and for each $r_{n-j,k}\in J_{i-1}$, we have that 
\[
    r_{n-j,k}=v_{i-j-1}e_{n-i+1}^{2^{i-j-1+k}}+\sum\limits_{l\in\{0,\ldots,n-j-1\}\setminus\{i-j-1\}}v_le_{n-j-l}^{2^{l+k}}.
\]
This implies that
there is a ring automorphism of $B_i$ replacing $r_{n-j,k}$ with $v_{i-j-1}$, from which it follows that $J_{i-1}$ is prime in $B_i$. 
Suppose then that $J_{i-1}$ is not prime in $A_i$, so that there exist $x,y\in A_i$ with $xy\in J_{i-1}$ and $x,y\notin J_{i-1}$. Since $J_{i-1}$ is prime in $B_i$, we can assume without loss of generality that $e_{n-i+1}^Nx\in J_{i-1}$ for some $N>0$ minimal. It follows that
\[e_{n-i+1}^Nx=\sum\limits_{j=1}^{i-1} a_jr_{n-j+1,k}\]
for some $a_j\in A_i$, and not all $a_j$ are divisible by $e_{n-i+1}$. Indeed, since $e_{n-i+1}$ is a nonzero element in the domain $A_i$, if all $a_j$ were divisible by $e_{n-i+1}$, we could use cancellation to see that $e_{n-i+1}^{N-1}x\in J_{i-1}$, contradicting minimality of $N$. Projecting to $A_{i-1}$ this gives
\begin{equation}\label{eq:primeidealexpression}
    0=\sum\limits_{j=1}^{i-1} a_jr_{n-j+1,k}\in A_{i-1}
\end{equation}
with not all $a_j=0$. 
We can further assume that if $j>1$ and $a_j\neq 0\in A_{i-1}$, then $a_j\notin J_{j-1}\subset A_{i-1}$. Indeed, if $a_j\in J_{j-1}$, then $a_j$ can be expressed as a linear combination of the $r_{n-l+1,k}$'s for $l<j$ and we can regroup the expression \eqref{eq:primeidealexpression} to have no $r_{n-j+1,k}$ term. The expression \eqref{eq:primeidealexpression} now contradicts regularity of $J_{i-1}$ in $A_{i-1}$.

Taking $i=n$, we have shown that $J_{n}$ is regular in $A_{n}$ and that $J_{n-1}$ is prime in $A_n$. In fact, there are ring isomorphisms $A_{n+1}\cong A_n[e_0]$ and $B_{n+1}\cong A_n[e_0^\pm]$, and since $e_0$ does not appear in any of the relations $r_{j,k}$ for $j>1$, we have that $A_{n+1}/J_{n-1}\cong (A_n/J_{n-1})[e_0]$ and $B_{n+1}/J_{n-1}\cong (A_n/J_{n-1})[e_0^\pm]$. It follows that $J_{n-1}$ is prime in both $A_{n+1}$ and $B_{n+1}$. We claim also that $J_n=J_{n-1}+(r_{1,k})$ is prime in $B_{n+1}$, which follows from the identifications
\begin{align*}
    B_{n+1}/J_n&=(B_{n+1}/J_{n-1})/(r_{1,k})\\
    &=(A_n/J_{n-1})[e_0^\pm]/(e_0^{2^k}+we_1^{2^k})\\
    &=(A_n/(w-1,J_{n-1}))[e_0^\pm,e_1^{-1}].
\end{align*}
Since $(A_n/(w-1,J_{n-1}))[e_0^\pm]$ is a domain, it suffices to observe that the localization $(A_n/(w-1,J_{n-1}))[e_0^\pm,e_1^{-1}]$ is nonzero, which follows from the fact that $J_{n-1}$ is prime in $B_n=A_n[e_1^\pm]$.

We have shown that $J_n$ is prime in $B_{n+1}$ and, to show that $J_n$ is also prime in $A_{n+1}=A$, we finish by mimicking the above argument showing that $J_{i-1}$ is prime in $A_i$. In fact we only used that $J_{i-1}$ was regular in $A_{i-1}$, that $J_{i-1}$ is prime in $B_i$, and that $A_i$ was a domain. The argument therefore goes through without change and we conclude that the ideal $I_n=(r_{n,k},\ldots ,r_{1,k})$ is prime in $A$ for all $n$. It follows that $R_k= A/I_\infty$ is a domain since in any fixed degree $R_k\cong A/I_n$ for $n$ sufficiently large.
\end{proof}

\section{Charts} \label{sec:charts}

We include here a few charts, to illustrate our computation described in \cref{cor:EinftyBSS} and \cref{subsec:hiddenextensions}.
For comparison, we also include on page \pageref{fig:BPOm2S3} a chart partially depicting the nonequivariant computation $BP_* \Omega^2 S^3$.

Additively, the computation of ${BP_\R}_\bigstar \Omega^\rho S^{\rho+1}$ splits into the computation of the $BP_\R$-homology of the Snaith summands, as in \cref{cor:HlgySnaithSplitting}. We have chosen to display (in a range) the $BP_\R$-homology of the Snaith summands of weights 0, 2, and 4. Recall that acccording to \cref{prop:evenoddsplit}, multiplication by $t_0$ gives an isomorphism from the even summands to the odd summands. In order to reduce visual clutter, we have further split the chart for the weight 4 homology into 2 charts, which we label as weight 4a and 4b.

Here is a key for reading the  charts.

\begin{enumerate}
    \item vertical lines denote multiplication by $v_0=2$
    \item horizontal lines denote multiplication by $\asig$
    \item slope 1/2 lines in \cref{fig:BPOm2S3} denote multiplication by $v_1$. In order to avoid clutter, we have not depicted the $v_1$-multiplications in weight 0. 
    \item each class is displayed using a symbol, indicating its periodicity with respect to a power of $\usig$ as labeled in the key. Thus the class $v_1$ in stem 2 and filtration 1 of the weight 0 chart contributes an $\F_2[\usig^{\pm 4}]$.
\end{enumerate}

We have also decided not to depict the $\asig$-multiples of $v_n$ for $n\geq 5$. In particular, the weight 0 chart displays only the $\asig$-multiples of $v_1$, $v_2$, $v_3$, and $v_4$ in Adams filtration 1.

\begin{rmk}
Recall from \cref{prop:E1identifications} that in the $\asig$-Bockstein spectral sequence, the class $t_0^2$ is equal to $\usig e_1$. The reader may replace the labels $t_0^2$ with $\usig e_1$ in the figures below, to emphasize the relation to $\usig$-periodicity, though this description is not valid in Ext groups. In particular, $\usig e_1$ is not a cycle in the cobar complex.
\end{rmk}

\clearpage

\begin{figure}[h]
\caption{The $E_2$-page for  summands of $BP\R_\bigstar\Omega^\rho S^{\rho+1}$}
\label{fig:Snaith0and2}

\includegraphics[width=0.95\textwidth]{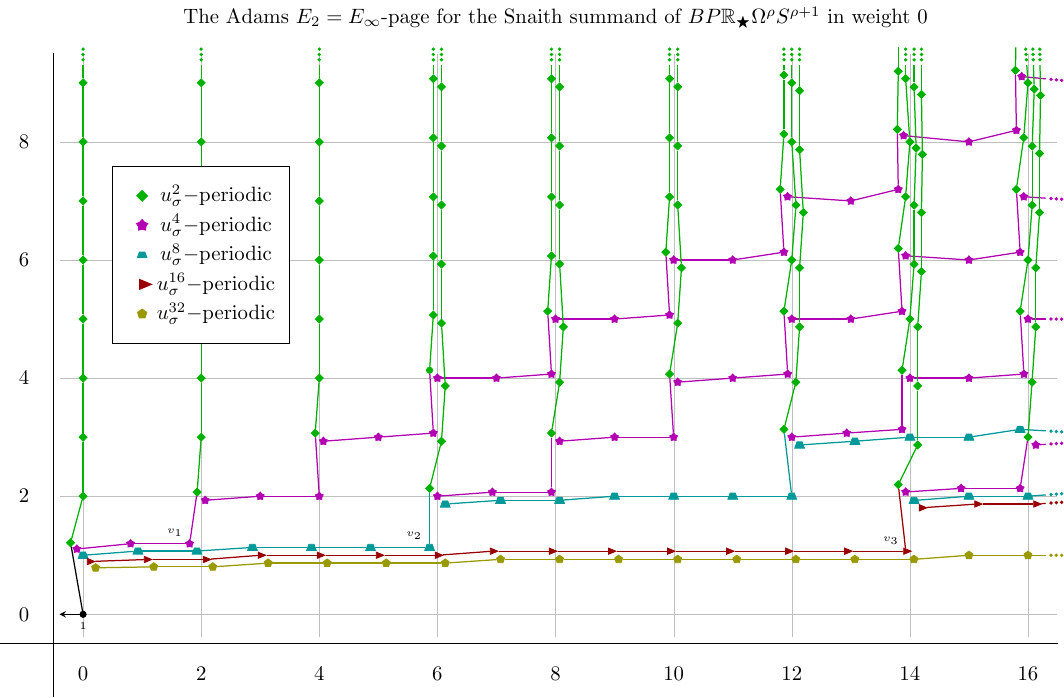}

\includegraphics[width=0.95\textwidth]{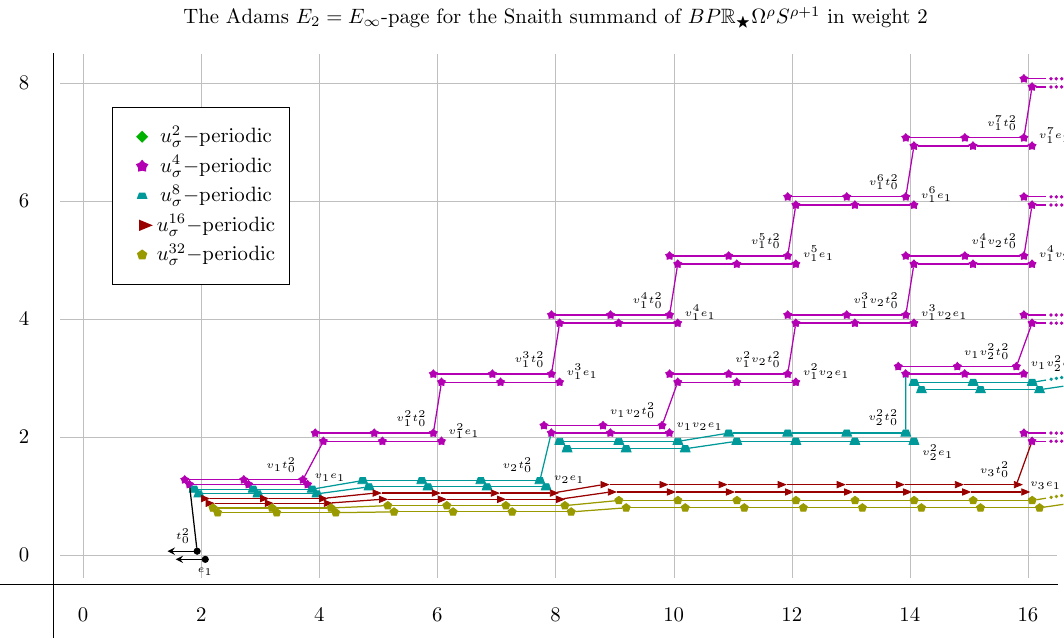}

\end{figure}

\clearpage

\label{fig:Snaith4}

\includegraphics[width=0.95\textwidth]{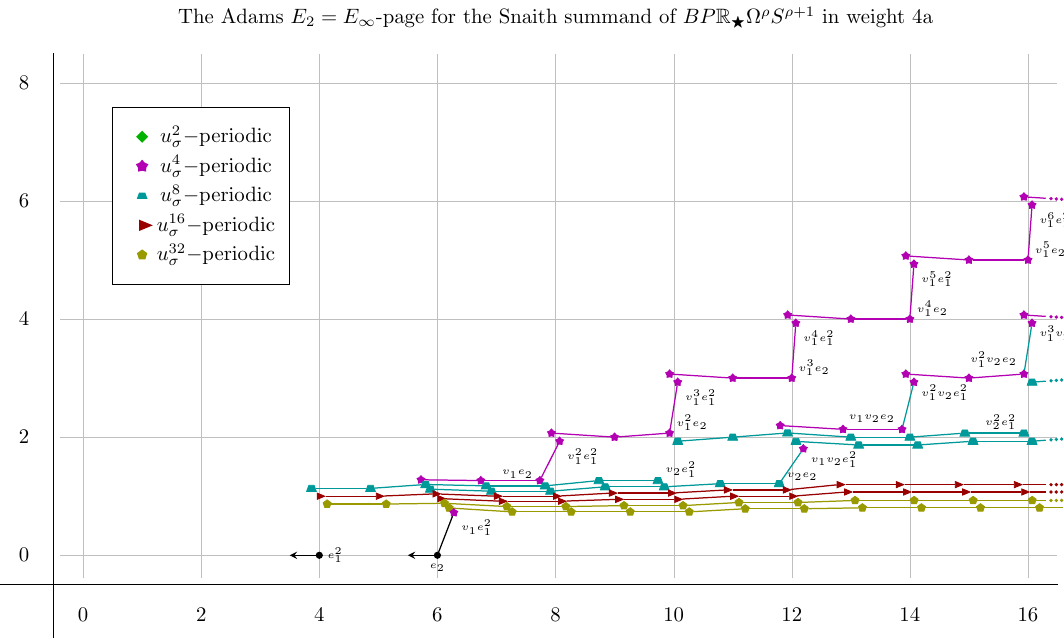}

\includegraphics[width=0.95\textwidth]{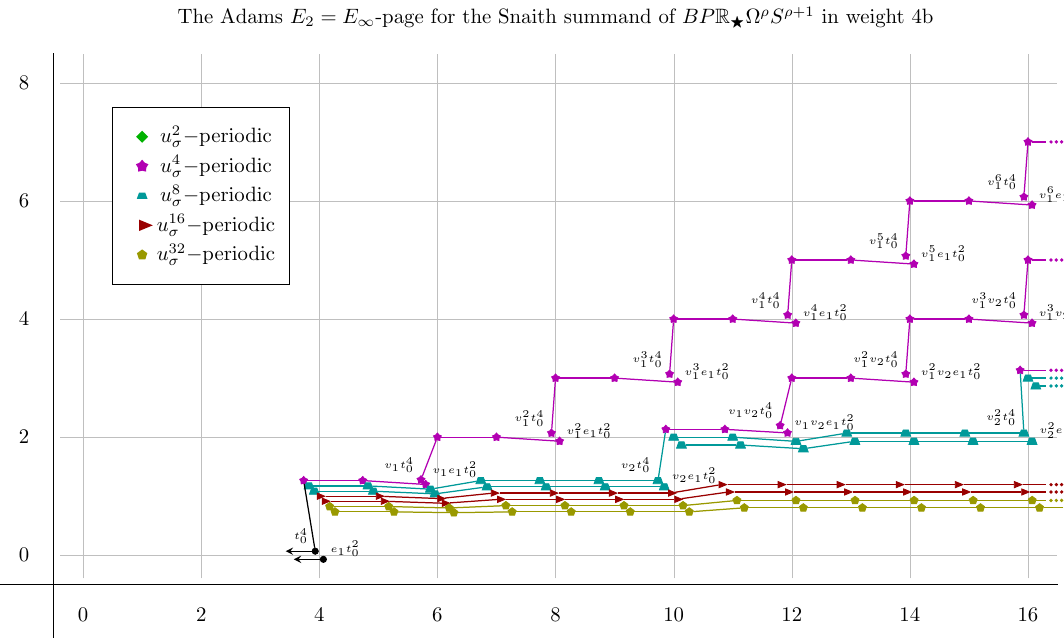}

\clearpage

\begin{figure}[h]
\caption{}
\label{fig:BPOm2S3}

\includegraphics[width=0.95\textwidth]{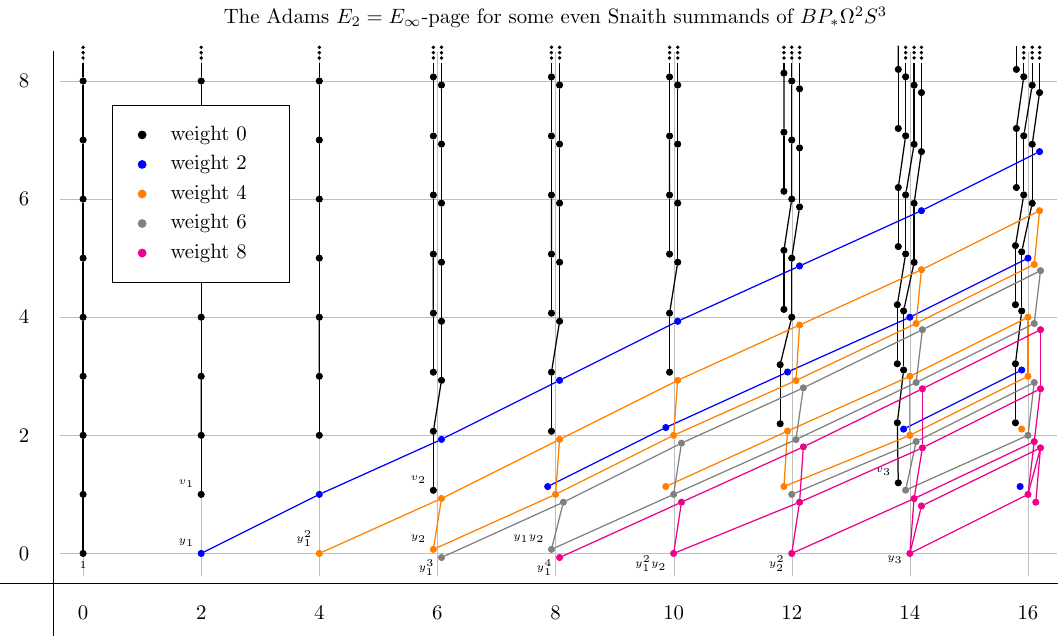}

\end{figure}

\bibliographystyle{alphaurl}
\bibliography{bib}

\end{document}